%
\documentclass[11pt]{article}
\usepackage{fullpage}
\usepackage{fancyhdr}
\usepackage{amsmath,amssymb,amsthm}
\usepackage{amsmath,amscd}
\usepackage{mathtools}
\usepackage{mathrsfs}
\usepackage{tikz-cd}
\usepackage{authblk}

\usepackage[letterpaper, margin=1in]{geometry}

\newcommand{\Z}{\mathbb{Z}}

\newcommand{\R}{\mathbb{R}}
\newcommand{\Q}{\mathbb{Q}}

\newcommand{\tb}[1]{\widetilde{\bold{ #1 }}}

\newcommand{\G}{Gal}

\newcommand{\Spec}{\text{Spec}}
\newcommand{\Spf}{\text{Spf}}

\newcommand{\Sp}{\text{Sp}}

\newcommand{\Rep}{\textbf{Rep}_{\mathbb{Q}_p}}
\newcommand{\Mphi}[1]{\textbf{M}\Phi^\nabla_{ #1, \sigma}}
\newcommand{\Mphiet}[1]{\textbf{M}\Phi^{et,\nabla}_{ #1, \sigma}}

\newcommand{\ang}[1]{\langle #1 \rangle}

\newcommand{\sheafrig}[2][]{\mathcal{O}_{\mathcal{ #2 }_{#1}^{an}}}
\newcommand{\sheafformal}[2][]{\mathcal{O}_{\mathcal{ #2 }_{#1}}}
\newcommand{\vnaive}[2]{v_{ #2 }^{\text{naive}}( #1 )}

\renewcommand {\bar}{\overline}
\newcommand{\im}{\text{Im}}

\DeclarePairedDelimiter\ceil{\lceil}{\rceil}

\newtheorem{theorem}{Theorem}[section]
\newtheorem{lemma}[theorem]{Lemma}

\newtheorem{definition}[theorem]{Definition}

\newtheorem{proposition}[theorem]{Proposition}

\newtheorem{corollary}[theorem]{Corollary}
\newtheorem{remark}[theorem]{Remark}

\def\frak{\relaxnext@\ifmmode\let\next\frak@\else
 \def\next{\Err@{Use \string\frak\space only in math mode}}\fi\next}
\def\goth{\relaxnext@\ifmmode\let\next\frak@\else
 \def\next{\Err@{Use \string\goth\space only in math mode}}\fi\next}
\def\frak@#1{{\frak@@{#1}}}
\def\frak@@#1{\noaccents@\fam\euffam#1}
\font\tengoth=eufm10
\newfam\gothfam \def\goth{\fam\gothfam\tengoth} \textfont\gothfam=\tengoth

\linespread{1}

\title{The monodromy of $F$-isocrystals with logarithmic decay}
\author{Joe Kramer-Miller}

\date{}

\begin{document}
\maketitle

\newcommand{\Addresses}{{
  \bigskip
  \footnotesize

  Joe Kramer-Miller, \textsc{Department of Mathematics, University College London,
    Gower Street, London}\par\nopagebreak
  \textit{E-mail address}, Joe Kramer-Miller: \texttt{j.kramer-miller@ucl.ac.uk}

}}

\abstract{ Let U be a smooth geometrically connected affine curve over $\mathbb{F}_p$ with compactification X.  Following Dwork and Katz, a $p$-adic representation $\rho$ of $\pi_1(U)$ corresponds to an $F$-isocrystal.  By work of Tsuzuki and Crew an $F$-isocrystal is overconvergent precisely when $\rho$ has finite monodromy at
 each $x \in X-U$.  However, in practice most F-isocrystals arising geometrically are not overconvergent and have logarithmic growth at singularities 
 (e.g. characters of the Igusa tower over a modular curve).  We give a Galois-theoretic interpretation of these log growth $F$-isocrystals in terms of asymptotic properties of higher ramification groups.  
}

\section{Introduction}
\subsection{The Riemann-Hilbert correspondence in positive characteristic}
The classical Riemann-Hilbert correspondence for a Riemann surface $S$ 
provides an equivalence of categories
between complex representations of $\pi_1(S)$ and holomorphic vector bundles on $S$ with 
flat connection.  In Katz' seminal paper \cite{Katz1} 
on $p$-adic modular forms, he proves 
a$\mod{p}$ analogue of this correspondence.  
The correspondence is roughly as follows:
Fix $X$ to be a proper smooth curve over a finite field $k$ of characteristic $p$
and let $D$ be a finite set of $k$-points in $X$.  Let $Y$ be the curve $X-D$
and let $j:Y \to X$ be the natural open immersion.
We fix formal schemes $\mathcal{X}$ (resp. $\mathcal{Y}$) whose special fibers
are $X$ (resp. $Y$).  The correspondence then gives an equivalence of categories
between $p$-adic representations of $\pi_1(Y)$ and \'etale convergent
$F$-isocrystals: vector bundles with 
connections commuting with a Frobenius on $\mathcal{Y}^{an}$
that satisfy certain convergence conditions (see \cite{Berthelot1} for
a precise definition).  
In \cite{Tsuzuki1} Tsuzuki proves that a representation of $\pi_1(Y)$
has finite monodromy around $D$ if and only if the corresponding convergent 
$F$-isocrystal can be extended to a strict neighborhood of $\mathcal{Y}^{an}$
(i.e. it is overconvergent).  The purpose of this article is to
generalize Tsuzuki's result by explaining the monodromy of a wider class of 
$F$-isocrystals that occur geometrically.

When studying the monodromy around a point $x \in D$, it is significantly 
simpler to work locally.  Let $F$ be the fraction field of the
completion of $\mathcal{O}_{X,x}$ and let $G_F$ be the absolute
Galois group of $F$.  We let $\mathcal{O}_K$ be the Witt vectors of $k$ with fraction
field $K$.  Since $X$ is smooth we know
that $F$ is isomorphic to $k((T))$.  The inclusion $G_F \hookrightarrow \pi_1(Y)$
lets us restrict our attention to representations 
$\rho: G_F \to GL_n(\Q_p)$.  To see what happens when we localize an
$F$-isocrystal we require a couple of definitions:
\[  \mathcal{E} := \Big\{ \sum_{n=-\infty}^\infty a_nT^n  \in K[[T,T^{-1}]] \Big |  
\begin{array}  {l}
v_p(a_i) \text{ is bounded below and } \\
v_p(a_i) \to \infty   \text{ as } i \to -\infty 
\end{array}
\Big \}  \]
\[\mathcal{E}^\dagger := \Big\{ f(T) \in \mathcal{E} \Big | \text{ }f(T)\text{ converges on an
annulus } 0<r< |T|_p < 1 \Big\}. \]
Note that $\mathcal{E}$ is the completion of the stalk at the special point
of $\Spec(\mathcal{O}_K[[T]])$ and $\mathcal{E}^\dagger$ are those functions which converge
on a neighborhood of the special point.  A function $f(T)$ is in $\mathcal{E}^\dagger$
if the valuations of the $a_n$ grow at least linearly as $n\to -\infty$.
If we translate the equivalence of categories
to our local situation we get correspondences due to Fontaine and Tsuzuki
(see \cite{Fontaine1}, \cite{Tsuzuki1}, and Section \ref{section define functors}):

\[ \left\{ 
\begin{array} {c}
\text{Free modules $M$ of rank $d$ over $\mathcal{E}$} \\
\text{with a connection $\nabla$ that commutes} \\
\text{with a unit-root Frobenious}
\end{array} \right\} 
\longleftrightarrow
\left\{
\begin{array}{c}
\text{ Continuous representations} \\
 \rho: G_F \to GL_d(\Q_p)
\end{array}
\right\}
 \]

\[ \left\{ 
\begin{array} {c}
\text{Free modules $M$ of rank $d$ over $\mathcal{E}^\dagger$} \\
\text{with a connection $\nabla$ that commutes} \\
\text{with a unit-root Frobenious}
\end{array} \right\} 
\longleftrightarrow
\left\{
\begin{array}{c}
\text{ Continuous representations} \\
 \rho: G_F \to GL_d(\Q_p) \\
 \text{where the inertia group $I_F$ has finite image}
\end{array}
\right\}.
\]

If $(\rho,V)$ is a $p$-adic $G_F$-representation, we obtain
a $(\phi,\nabla)$-module over $\mathcal{E}$
by $(\widetilde{\mathcal{E}} \otimes_{\Q_p} V)^{G_F}$, where $\widetilde{\mathcal{E}}$
is a $p$-adically complete unramified closure of $\mathcal{E}$ endowed
with a derivative and a Frobenius.  This $(\phi,\nabla)$-module
over $\mathcal{E}$ descends to a $(\phi,\nabla)$-module over $\mathcal{E}^\dagger$
if and only if $\rho$ has finite monodromy.
Let $\bold{e}=(e_1,...,e_d)$ be
a $\Q_p$-basis of $V$ and let $\bold{a}=(a_1,...,a_d)$ be a $\mathcal{E}$-basis
of $(V \otimes \widetilde{\mathcal{E}})^{G_F}$.  Then there is
an invertible matrix of periods $A$ with entries in $\widetilde{\mathcal{E}}$ satisfying
\begin{eqnarray*}\label{period matrix1}
\bold{a}&=&A\bold{e}.
\end{eqnarray*}
We may think of $A^{-1}\bold{a}$ as a bases of solutions to 
the differential equation given by $\nabla$.  One consequence of Tsuzuki's
result is that the entries of $A$ have "linear decay"
if and only if $\rho$ has finite monodromy (actually Tsuzuki's theorem
says more: the entries of $A$ have linear decay and are algebraic over 
$\mathcal{E}^\dagger$).  In the finite monodromy case
there is also a close relation between the ramification breaks of $\rho$
and the radii of convergence of $\nabla$ (see \cite{Kedlaya1} for an overview).
The takeaway is that the decay properties of the differential equation
around a point $x$ in $D$ is closely related to the monodromy of $\rho$ at $x$.
It is natural to ask how to realize this relation when $\rho$ has infinite
monodromy.

\subsection{$F$-isocrystals with logarithmic decay}
Let $f:Z \to Y$ be a proper smooth morphism.  Then $\mathscr{F}=R^i f_* \Q_p$ is
a $\Q_p$-lisse sheaf on $Y$ and therefore corresponds to a $p$-adic 
representation $\rho_{\mathscr{F}}$ of $\pi_1(Y)$.  We call a representation
of $\pi_1(Y)$ arising this way \emph{geometric}.  An \'etale $F$-isocrystal over
$Y$ is
called \emph{geometric} if the corresponding representation of $\pi_1(Y)$ is
geometric.  All geometric \'etale $F$-isocrystals
are the unit-root sub-$F$-isocrystal of an overconvergent $F$-isocrystal
(see \cite{Wan1} for a more precise definition).  
Geometric $F$-isocrystals are generally not overconvergent.  For example,
let $Y$ be the ordinary locus of a modular curve and let $E^{univ}$ be
the universal ordinary elliptic curve over $Y$.  A famous theorem of
Igusa states that the action of $\pi_1(Y)$ on the Tate module of 
$E^{univ}$ has infinite monodromy at every supersingular point (see 
\cite{Igusa1}) and therefore the corresponding $F$-isocrystal
is not overconvergent.  We remark that in this example it is possible to
choose an ``excellent" Frobenius lifting that makes the Frobenius structure
 overconvergent.
Another important example are the hyper-Kloosterman sheaves arising from
the Dwork family of hypersurfaces.  In \cite{Sperber1} Sperber shows that 
the $F$-isocrystals associated to hyper-Kloosterman sheaves cannot
admit an overconvergent Frobenius structure.  

This means that if we want to study all geometric \'etale $F$-isocrystals we
must work in a larger category than the category of overconvergent
\'etale $F$-isocrystals.  The correct category turns out to be
\'etale $F$-isocrystals that have at worst \emph{logarithmic decay} at
each ramified point.  These log-decay $F$-isocrystals were first studied
by Dwork and Sperber in \cite{Dwork-Sperber}, where they studied
the meromorphic continuation of unit-root $L$-functions (actually
Dwork and Sperber only considered log-decay in the Frobenius structure,
but one can show that this condition implies log-decay on the
differential structure).  Dwork and Sperber prove that the
unit-root part of an overconvergent $F$-isocrystal have
log-decay, and therefore that all geometric \'etale $F$-isocrystals
have log-decay.  We remark that the category of geometric $F$-isocrystals
is much smaller than the category of log-decay $F$-isocrystals.  
Wan proves in \cite{Wan1} that the unit-root zeta function associated
to a geometric \'etale $F$-isocrystal has a meromorphic continuation
to the entire $p$-adic plane.  Wan also gives examples in \cite{Wan2}
of \'etale $F$-isocrystals with log-decay whose unit-root zeta function
does not admit a meromorphic continuation.

\subsubsection{A local description of log-decay}
Let us precisely 
describe what log-decay means
when we work locally around $x \in D$.  For $r>0$ consider the ring 
$\mathcal{E}^r \subset \mathcal{E}$ defined by

\[  \mathcal{E}^r := \Big\{ \sum_{n=-\infty}^\infty a_nX^n  \in \mathcal{E} \Big |  
\begin{array}  {l}
 \text{ There exists $c$ such that } \\
v_p(a_n) - \frac{\log_p (-n)}{r} > c   \text{ for } n < 0 
\end{array}
\Big \}.  \]
Let $\rho$ be a $p$-adic representation of $G_F$ and let
$M$ be the corresponding $(\phi,\nabla)$-module over $\mathcal{E}$
given by Fontaine's theory.  Then an $F$-isocrystal has
$r$-log-decay at $x$ if the $M$ descends to a $(\phi,\nabla)$-module
on $\mathcal{E}^r$.  By Theorem \ref{Trivializing log-decay diffeqs}
that this is equivalent to the entries of the period matrix
in equation (\ref{period matrix1}) having a log-decay condition.  
The purpose of this article is
to describe a Galois
theoretic property of $\rho$ that determines when $M$ descends to
a $(\phi,\nabla)$-module over $\mathcal{E}^r$.  This gives us to
a deeper understanding of the monodromy properties of geometric representations
 of $\pi_1(Y)$.  
 
 The Galois-theoretic meaning of the log-decay condition has
 to do with the interaction between the $p$-adic Lie filtration
 and the ramification filtration determined by $\rho$.  We draw
 inspiration by a theorem of Sen (see \cite{Sen1}), which nicely
 explains the interaction of these two filtrations for a local field with
 mixed characteristic. 
  Let $K$ be a local field  and let $L$ be a Galois extension
 of $K$ whose Galois group $G_{L/K}$ is a $p$-adic Lie group.
 The condition that $G_{L/K}$ is a $p$-adic Lie group guarantees
 a filtration 
 \[G_{L/K}=G_{L/K}(0) \supset G_{L/K}(1) \supset G_{L/K}(2) \supset ...,\]
 which satisfies \[\bigcap_n^\infty G_{L/K}(n)=\{0\}~~~\text{and,}\] 
 \[G_{L/K}(n+1)=\{s \in G_{L/K} ~|~ s=x^p ~ \text{for }x \in G_{L/K}(n)\}.\]
 We also have a ramification filtration on $G_{L/K}$ described in
 Section \ref{section ramification}.  
 \begin{theorem} (Sen) Assume that $K$ is a finite extension of $\Q_p$
 with ramification index $e$.  There exists $c>0$ such that
 \[ G^{ne +c} \subset G(n) \subset G^{ne-c}.\]
\end{theorem}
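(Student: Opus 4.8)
The statement compares two filtrations on $G = G_{L/K}$: the $p$-adic Lie filtration $G(n)$ (defined by $p$-th power maps) and the upper-numbering ramification filtration $G^u$. The plan is to reduce everything to the abelian case via local class field theory and then track how the two filtrations transform under the passage to abelian quotients, using the standard Herbrand-function bookkeeping.

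First I would reduce to the case where $L/K$ is \emph{totally ramified}: the unramified part of $L/K$ is a finite quotient, harmless for the asymptotic statement since both filtrations are eventually trivial on it and the claim is only about large $n$ up to an additive constant $c$. So assume $G = G_{L/K}$ with $L/K$ totally ramified and $G$ a $p$-adic Lie group. Next, since $G$ is a $p$-adic Lie group of dimension $\geq 1$, after passing to an open subgroup (equivalently replacing $K$ by a finite extension $K'$, which changes the ramification index $e$ by a bounded factor and perturbs the ramification filtration by a bounded amount via the known behaviour of $G^u$ under finite base change) I may assume $G$ is uniform pro-$p$, so that $G(n)$ is literally the lower $p$-series and $G(n)/G(n+1) \cong (\Z/p)^{\dim G}$. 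The key reduction is then to the one-dimensional abelian case $G \cong \Z_p$: a uniform group is built from $\Z_p$'s, and both the assertion $G^{ne+c} \subset G(n)$ and $G(n) \subset G^{ne-c}$ can be checked after projecting to (and inducing from) the various $\Z_p$-quotients/subgroups, using that the ramification filtration on a subgroup $H \leq G$ is $H^{\psi(u)}$ for the relevant Herbrand function, which is asymptotically linear with slope controlled by the ramification index.

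So the heart of the matter is: \emph{if $G_{L/K} \cong \Z_p$ with $L/K$ totally ramified and $K/\Q_p$ finite with ramification index $e$, then the ramification breaks of $G_{L/K}$ are $\sim en$ up to $O(1)$.} This I would prove by local class field theory: the reciprocity map $K^\times \to G_{L/K}^{\mathrm{ab}} = G_{L/K}$ carries the unit filtration $U_K^{(m)} = 1 + \gothm_K^m$ onto the upper-numbering ramification filtration $G_{L/K}^m$ (Hasse--Arf, in the normalization where the jumps are integers). So it suffices to understand the image of $U_K^{(m)}$ in $\Z_p$. Since $K^\times \cong \Z \times \mu \times \Z_p^{[K:\Q_p]}$ and the log map identifies $U_K^{(m)}$ for $m$ large with $\gothm_K^m$ as a $\Z_p$-module, the image of $U_K^{(m)}$ in any $\Z_p$-quotient is $p^{\lceil m/e \rceil + O(1)} \Z_p$, because raising the valuation by $e$ in $K$ corresponds to multiplying by $p$ (that is where the factor $e$ enters: $v_p = \frac{1}{e} v_K$ on $K$). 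Hence $G^m = G(\lceil m/e\rceil + O(1))$, which rearranges to exactly $G^{ne+c} \subset G(n) \subset G^{ne-c}$ for a suitable constant $c$.

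The main obstacle is the reduction step from general $p$-adic Lie $G$ to the abelian $\Z_p$ case: one must check that passing to a uniform open subgroup and then to $\Z_p$-quotients distorts \emph{both} filtrations only by bounded amounts, and in a \emph{compatible} direction, so that the two-sided sandwich survives. The ramification side is controlled by the standard functoriality of upper/lower numbering under open subgroups and quotients (Herbrand's theorem, and the fact that the relevant Herbrand functions $\varphi,\psi$ are asymptotically linear with slope a power of $p$ times a rational constant depending on the ramification indices), while the Lie side is controlled by the fact that in a $p$-adic Lie group any two "$p$-power" filtrations — the lower $p$-series of $G$ and of an open subgroup — are commensurable up to a shift by a constant. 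Assembling these commensurabilities into the clean statement with a single $c$ is the technical crux; everything else is either class field theory in the abelian case or routine bookkeeping.
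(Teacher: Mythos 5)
The paper does not prove this statement: it is quoted from Sen's article \cite{Sen1} purely as motivation, so there is no internal argument to compare yours against. Judged on its own terms, the abelian core of your sketch is correct and is the classical Tate--Sen computation: for a totally ramified $\Z_p$-extension the reciprocity map carries $U_K^{(m)}$ onto $G^m$, the logarithm identifies $U_K^{(m)}$ with $\gothm_K^m$ for $m$ large, and $\gothm_K^{m+e}=p\,\gothm_K^m$, so the breaks occur at $m=ne+O(1)$.

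The genuine gap is the reduction from a general $p$-adic Lie group to the $\Z_p$ case, which you correctly identify as the crux but then dismiss as Herbrand-function bookkeeping plus commensurability of $p$-power filtrations. It is not bookkeeping; it is the entire content of Sen's theorem. Concretely: the inclusion $G^{ne+c}\subset G(n)$ is equivalent, via Herbrand, to the vanishing of $(G/G(n))^{ne+c}$ for the finite $p$-group $G/G(n)$, which is nonabelian in general. If $G$ is an open subgroup of $SL_2(\Z_p)$ its abelianization is finite, so $G$ has \emph{no} infinite $\Z_p$-quotients and ``projecting to $\Z_p$-quotients'' detects nothing about the filtration $G(n)$. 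Passing to subgroups instead does not help directly, because the upper numbering is compatible with quotients but not with subgroups: converting $H^u$ for $H\leq G$ into $G$-numbering requires the Herbrand function of the intermediate extension, whose asymptotic slope is governed by the orders of the very ramification subgroups the theorem is trying to locate --- the argument as outlined is circular. Even the ``soft'' route of climbing a composition series with degree-$p$ cyclic steps, bounding each step by the fact that any cyclic degree-$p$ extension of a field $M$ has upper break at most $\tfrac{p}{p-1}e_M$, yields only $G^{ne'}\subset G(n)$ with $e'=\tfrac{p}{p-1}e\dim G$, not the sharp coefficient $e$, and gives nothing in the direction $G(n)\subset G^{ne-c}$. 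What closes the loop in Sen's proof is a hard, self-contained estimate comparing $i(\sigma^p)$ with $i(\sigma)$ and $v_L(p)$ for an automorphism $\sigma$ of a mixed-characteristic local field; your outline supplies neither this input nor any substitute for it.
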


This result completely fails for equal characteristic.  In general
the ramification filtration behaves too erratically to have
a reasonable relationship with the Lie filtration.  For example,
let $G$ be the Galois group of a totally ramified 
$\Z_p$-extension of $K$.  Let $s_0<s_1<s_2<...$ be the breaks
of the ramification filtration of $G$.  When $K$ has characteristic $0$ we
deduce from Sen's theorem that $s_n$ is approximately $ne$.  However
in characteristic $p$ it is possible for any sequence of integers $s_i$
to occur as long as $s_{i+1}\geq ps_i$ (\cite[Proposition 15]{Cherbonnier1}).
The $r$-log-decay condition will correspond to the $\Z_p$ extensions
such that the breaks $s_i$ grow $O(p^{ir})$.  More generally, let
$\rho:G_F \to GL(V)$ be a $p$-adic Galois representation.  Let $L\subset V$
be a lattice stable under the action of $G_F$.  We may regard
$G=\im(G_F \to GL(L))$ as a Galois group for an extension over $F$ that
is also a $p$-adic Lie group.  The lattice $L$ gives a $p$-adic Lie filtration
on $G$ by \[ G(n) = \ker(G \to GL(L/p^nL)).\] 
We then define the category $\Rep^r(G_F)$ to be the $p$-adic representations
$\rho$ of $G_F$ such $G^{p^{nr}c} \subset G(n)$.  Our main local result is

\begin{theorem} \label{main local result} 
There is an equivalence of categories

\[ \left\{ 
\begin{array} {c}
\text{\'etale }(\phi,\nabla)\text{-modules over }\mathcal{E}^r.
\end{array} \right\} 
\longleftrightarrow
\left\{
\begin{array}{c}
\Rep^r(G_F)
\end{array}
\right\},
\]
sending a $p$-adic representation $V$ to $D^r(V)$.  The functor $D^r$
is compatible with the functors $D$ and $D^\dagger$ of Fontaine and
Tsuzuki.  That is, if $V$ is in $\Rep^r(G_F)$ then 
$D^r(V)\otimes_{\mathcal{E}^r}\mathcal{E} = D(V)$ and if 
$V$ is in $\Rep^\dagger(G_F)$ then 
$D^\dagger (V) \otimes_{\mathcal{E}^\dagger} \mathcal{E}^r = D^r(V)$.

\end{theorem}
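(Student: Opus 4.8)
The plan is to bootstrap from Fontaine's equivalence over $\mathcal{E}$ (\cite{Fontaine1}) by pinning down exactly which $\mathcal{E}^r$-lattices inside a given \'etale $(\phi,\nabla)$-module over $\mathcal{E}$ can occur, and matching this against the ramification condition cutting out $\Rep^r(G_F)$. For a $p$-adic representation $V$ I will set $D^r(V) := (\widetilde{\mathcal{E}}^r \otimes_{\Q_p} V)^{G_F}$, where $\widetilde{\mathcal{E}}^r \subset \widetilde{\mathcal{E}}$ is the subring of elements with $r$-log-decay tails; this is $\phi$-, $\nabla$-, and $G_F$-stable and satisfies $(\widetilde{\mathcal{E}}^r)^{G_F}=\mathcal{E}^r$, so $D^r(V)\subseteq D(V)$ tautologically and carries the induced $\phi$ and $\nabla$. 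The theorem then splits into two assertions: \textbf{(i)} $D^r(V)$ is free of rank $\dim V$ over $\mathcal{E}^r$ with $D^r(V)\otimes_{\mathcal{E}^r}\mathcal{E}=D(V)$ precisely when $V\in\Rep^r(G_F)$; and \textbf{(ii)}, granting (i), that $D^r$ is an equivalence with the asserted compatibilities.

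I would prove (i) through the period matrix. Fix a $G_F$-stable lattice $L\subset V$, let $G=\im(G_F\to GL(L))$ with its Lie filtration $G(n)=\ker(G\to GL(L/p^nL))$, and let $L_n/F$ be the finite subextension with $\G(L_n/F)=G/G(n)$. Writing a candidate $\mathcal{E}$-basis $\mathbf a$ of $D(V)$ as $\mathbf a = A\mathbf e$ over a $\Q_p$-basis $\mathbf e$ of $V$, Theorem \ref{Trivializing log-decay diffeqs} tells us that $D(V)$ descends to $\mathcal{E}^r$ iff $\mathbf a$ can be chosen with $A$ and $A^{-1}$ having entries in $\widetilde{\mathcal{E}}^r$. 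The crux is the quantitative dictionary: reducing $\rho$ modulo $p^n$ and applying the finite-monodromy (Artin--Schreier--Witt/Tsuzuki, \cite{Tsuzuki1}) description, the "$n$-th layer" of $A$ has pole order at $T=0$ governed by the top ramification break of $L_n/F$, and its coefficient of $T^{-m}$ has $p$-adic valuation bounded below by essentially $n-\log_p m$ divided by the $p$-adic order of that break; hence $A$ has $r$-log-decay iff the breaks of $L_n/F$ grow like $O(p^{nr})$, which --- translating lower numbering for $L_n/F$ into upper numbering on $G$ via the same Herbrand $\psi$ --- is exactly the containment $G^{p^{nr}c}\subset G(n)$ in the ramification filtration. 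Making this estimate uniform in $n$, and reducing from a general $p$-adic Lie $G$ to its abelian graded pieces $G(n)/G(n+1)$ by d\'evissage along the Lie filtration, is the main obstacle and the real content of the proof.

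Granting (i), essential surjectivity is a base-change argument: an \'etale $(\phi,\nabla)$-module $M$ over $\mathcal{E}^r$ gives $M\otimes_{\mathcal{E}^r}\mathcal{E}=D(V)$ for a unique $V$ by Fontaine, and viewing $M$ as a $\phi,\nabla$-stable $\mathcal{E}^r$-lattice of $D(V)$ and invoking (i) shows both $V\in\Rep^r(G_F)$ and $M=D^r(V)$. Full faithfulness reduces, through the internal $\Hom$ $(\phi,\nabla)$-module, to the computations $(\widetilde{\mathcal{E}}^r)^{\phi=1,\,G_F}=\Q_p$ and the evident $\mathcal{E}^r$-analogue of Fontaine's invariance statements, together with the fact --- again from (i) --- that any $\mathcal{E}$-linear $\phi\nabla$-morphism $D^r(V)\to D^r(V')$ carries the lattice $D^r(V)$ into $D^r(V')$. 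Finally, $D^r(V)\otimes_{\mathcal{E}^r}\mathcal{E}=D(V)$ holds by construction, and for $V$ of finite monodromy the identity $D^\dagger(V)\otimes_{\mathcal{E}^\dagger}\mathcal{E}^r=D^r(V)$ follows from the inclusions $\mathcal{E}^\dagger\subset\mathcal{E}^r\subset\mathcal{E}$ and the observation that Tsuzuki's period matrices, having linear decay (indeed being algebraic over $\mathcal{E}^\dagger$), are a fortiori $r$-log-decay, so that both sides coincide with the unique $\phi,\nabla$-stable $\mathcal{E}^r$-lattice in $D(V)$.
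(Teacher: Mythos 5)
Your architecture matches the paper's: define $D^r(V)=(\widetilde{\mathcal{E}}^r\otimes V)^{G_F}$, reduce everything to the period matrix $A$ in $\mathbf{a}=A\mathbf{e}$, and characterize membership in $\Rep^r(G_F)$ by the growth of the Teichm\"uller layers of $A$. The trivialization statement (Proposition \ref{Trivializing log-decay diffeqs}) and the descent of morphisms enter your argument the same way they enter the paper's, and your internal-$\Hom$ route to full faithfulness is an acceptable variant of the paper's direct estimate $w_n(S)\geq -2p^{rn}d/(p-1)$ extracted from $B^{-1}SA=S^\sigma$ (Proposition \ref{fully faithful}).

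However, there is a genuine gap at exactly the step you yourself flag as ``the main obstacle and the real content of the proof'': the implication that $\rho\in\Rep^r(G_F)$ forces $A$ to be choosable in $M_{d\times d}(\mathcal{O}_{\widetilde{\mathcal{E}}^r})$ (Proposition \ref{D^r does the right thing}). Your heuristic --- that the $n$-th Teichm\"uller layer of $A$ has pole order governed by the top break of $F_n/F$ --- is not substantiated, and the paper does not obtain it by d\'evissage to the graded pieces $G(n)/G(n+1)$ of the representation. What is actually needed is a lifting mechanism: given a Galois-invariant basis of $(\mathcal{O}_{\widetilde{\mathcal{E}}}\otimes L)\bmod p^n$ with controlled growth, produce one modulo $p^{n+1}$ whose new layer is worse only by a factor $T^{p^{nr}c_0}$. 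The paper achieves this by showing that $H^1(H(n+1),\mathcal{O}_{F_{n+1}}\otimes L/pL)$ is killed by $T^{p^{nr}c_0}$ whenever $H(n+1)^{p^{nr}c_0}=0$ (Lemmas \ref{H1 for cyclic extensions} and \ref{cohomology Torsion}, proved by passing through degree-$p$ subextensions and exhibiting an explicit basis of the image of $1-g_0$), combined with the auxiliary period modules $\widetilde{\mathcal{E}}^{r,c,n}$, Lemma \ref{lifting Galois invariants}, and a choice of $c>(p^rc_0+|w_1(T)|)/(p^r-1)$ so that the per-stage losses telescope to a uniform constant. None of this is visible in your sketch. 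The converse direction also needs more than Herbrand reindexing: the paper bounds $\mu_{n+1}$ from $\mu_n$ by applying Lemma \ref{Other ramification Lemma} to the identity $[g(A_n)]-[A_n]\equiv B[A_0]\bmod p$, which forces $v(g(A_n)-A_n)=0$ and hence $\lambda_{n+1}<|H(n+1)|p^{(n+1)r}c$, and then invokes Lemma \ref{Bounding upper break in extension} together with $|H(n+1)|/|H(n)|\leq p^{d^2}$. Without these two quantitative inductions the ``iff'' in your step (i) is asserted rather than proved.
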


\subsubsection{A global description of log-decay}
In the final section we introduce a global notion of $r$-log-decay $F$-isocrystals
and relate the $r$-log-decay property to the geometry of representations of
$\pi_1(Y)$.  We are inspired by Berthelot's notion of overconvergent sheaves 
(see \cite[Section 2.1]{Berthelot1}).
Berthelot defines a sheaf of rings $\mathcal{O}_{\mathcal{Y}^{an}}^{\dagger}$ on
$\mathcal{X}^{an}$ whose sections on any strict neighborhood of $\mathcal{Y}^{an}$ are
the analytic functions on $\mathcal{Y}^{an}$ that converge on an outter annulus of the residue disk of each $x$ in $D$.  
An convergent $F$-isocrystal $\mathcal{M}$ on $Y$, which is
a vector bundle on $\mathcal{Y}^{an}$ with a connection and Frobenius satisfying certain convergent conditions, 
is overconvergent if 
it extends to a sheaf of $\mathcal{O}_{\mathcal{Y}^{an}}^{\dagger}$-modules.  By expanding
a convergent $F$-isocrystal 
$\mathcal{M}$ in terms of a parameter of $x \in D$, we are able to recover a local
 $F$-isocrystals over $\mathcal{E}$ (see \cite[Section 4]{Crew3} for
a thorough description of this localization process).  If $\mathcal{M}$ is overconvergent this
local $F$-isocrystal will descent to $\mathcal{E}^\dagger$.  

In order to have a global notion of $r$-log-decay, we define a sheaf of rings 
$\mathcal{O}_{\mathcal{Y}^{an}}^{r}$ on $\mathcal{X}^{an}$.  Similar to Berthelot's
construction, the sections of $\mathcal{O}_{\mathcal{Y}^{an}}^{r}$ on a strict
neighborhood of $\mathcal{Y}^{an}$ are
the analytic functions on $\mathcal{Y}^{an}$ that have $r$-log-decay in the residue
disk of each $x$ in $D$.  We then say that an $F$-isocrystal $\mathcal{M}$ on $Y$ has
$r$-log-decay if $\mathcal{M}$ extends to a sheaf of $\mathcal{O}_{\mathcal{Y}^{an}}^{r}$-modules.
We prove that $\mathcal{M}$ has $r$-log-decay if and only if locally at each $x \in D$ 
the corresponding $F$-isocrystal over $\mathcal{E}$ descends to $\mathcal{E}^r$.
That is, our global definition of $r$-log-decay is the same as having $r$-log-decay
locally at each point.  

We then use the Riemann-Hurwitz-Hasse formula to give a global geometric interpretation
of the $r$-log-decay property.  More precisely, let $\rho:\pi_1(Y)\to GL_d(V)$ be a 
$\Q_p$-representation let $L\subset V$ be a stabilized lattice.  We let $G$ be the image of
$\pi_1(Y)$ and use $L$ to define a $p$-adic Lie filtration 
\[G=G(0) \subset G(1) \subset G(2) \subset ...\]
This gives an \'etale pro-$p$ tower over curves 
\[Y=Y_0 \leftarrow Y_1 \leftarrow Y_2 \leftarrow ... \]
Let $X_n$ be the compactification of $Y_n$.  
The Riemann-Hurwtiz-Hasse formula gives a formula for the genus $g_n$ of $X_n$
in terms of the degree $d_n$ and the different $\delta_{X_n/X}$ of the map $X_n \to X$.  
The
different is determined by the higher ramification groups, which allows us to 
deduce upper bounds for $g_n$
from our local results.  More precisely the $F$-isocrystal
associated to $\rho$ has $r$-log-decay if and only if the genus to
degree ratio $\frac{g_n}{d_n}$ grows $O(p^{nr})$.  
This is the content of Theorem \ref{Main global theorem}.

\subsection{Outline of article}
This article is divided into seven sections.  In Section 
\ref{section log growth rings} we develop some basic
properties of the log-decay ring $\mathcal{E}^r$ and its 
unramified extensions.  Section \ref{section phi nabla modules}
contains an overview of the theory of $(\phi,\nabla)$-modules
over various rings of Laurent series.  We then define the
log-decay period rings $\widetilde{\mathcal{E}}^r$ in Section  \ref{section period rings} and develop some of their basic properties.  In Section
\ref{section define functors} we introduce the functors 
$D$, $D^\dagger$ and $D^r$ using the period rings of the previous section.
The sixth section contains an review of the higher ramification groups
and several auxiliary lemmas on ramification theory.  The proof
of Theorem \ref{main local result} is provided in Section
\ref{section Main result}.  Finally, we return to the global situation in section eight:
we give a precise global definition of log-decay $F$-isocrystals and
we deduce global geometric statements about pro-$p$ towers
of curves corresponding to log-decay $F$-isocrystals.

\subsection{Future work on Frobenius distributions and genus stability}

Let $\rho$ be a $p$-adic representation of $\pi_1(X)$ with finite
monodromy and let $M_\rho$ be the corresponding overconvergent $F$-isocrystal.
Berthelot developed a theory of rigid cohomology, which allows coefficients 
in $M_\rho$.  Crew demonstrated that much of
Deligne's $l$-adic arguments in \cite{Deligne1} 
translate to the $p$-adic setting, assuming certain finiteness
conditions on the rigid cohomology groups (see \cite{Crew1}).  For instance,
Crew was able to prove an equidistrobution theorem for the eigenvalues of the Frobenius analogous to Deligne's $l$-adic Cheboratev density theory. 
The finiteness of rigid cohomology has since been proven by Kedlaya (see \cite{Kedlaya4}) using the $p$-adic monodromy theorem of Andre, Mebkhout, and
Kedlaya (see \cite{Andre1}, \cite{Mebkhout1}, and \cite{Kedlaya2}).  
This picture is much less complete when one considers the larger
category of convergent $F$-isocrystals.  In particular, the equidistrobution of 
Frobenius eigenvalues is known to be false.  Deligne's arguments utilize bounds obtained
from the Lefschetz trace formula and the Euler-Poincare formula.  The latter
depends on higher ramification groups.  It therefore seems likely that
any attempt to study the Frobenius distrobution of an $F$-isocrystal
will rely on a thorough understanding the monodromy of this $F$-isocrystal.  One may
hope that the $r$-log-decay property could measure the failure of Frobenius equidistrobution.
The author is currently investigating this phenomenon.  

Let $M$ be an $F$-isocrystal of rank one with $r$-log-decay.  This
corresponds to a $\Z_p$-tower of curves $X_n$.  By Theorem \ref{Main global theorem}
we know that $g_n$, the genus of $X_n$, is bounded by $cp^{(r+1)n}$.  Recent work
of Kosters and Wan prove that $g_n$ is bounded below by a quadratic 
in $p^n$ (see 
\cite[Corollary 5.3]{Kosters-Wan}).  
In fact they give a precise formula for $g_n$ in terms of 
Artin-Shreier-Witt theory.  In upcoming work we show that
if $M$ is the unit-root part of an overconvergent $F$-isocrystal, then
we may take $r=1$ (this was previously known by Wan and Sperber,
as is mentioned in a remark in \cite{Wan1}, though it does not
appear to be published).  It follows that $g_n$ is bounded above
and below by quadratics in $p^n$.  It is natural to ask: when
is $g_n$ given precisely by a quadratic in $p^n$ for large $n$?  
Such a tower is called \emph{genus-stable} and such towers
have been classified by Kosters and Wan in the context of 
Artin-Shreier-Witt theory.  For example,
the Igusa tower is genus-stable.  One may hope that
any rank one $F$-isocrystal that is the unit-root subspace
 of an overconvergent $F$-isocrystal gives rise to a genus-stable
 tower.  This would imply that any $\Z_p$-tower arising geometrically
 is genus-stable.  In particular, let $A\to X$ be an Abelian variety
 such that the Tate module of $A \times_X Y$ is a rank one $\Z_p$-module.
 Then one would hope that
 the tower of curves obtained by adding torsion points to the
 function field of $X$ is genus-stable.  The author is currently 
 investigating these questions.

\subsection{Acknowledgments}
I would like to thank Daqing Wan for his enthusiasm towards 
this project from its earlier stages.
He explained the importance of $F$-isocrystals with log-decay and pointed me to his
papers on the topic.  I would also like to thank Laurent Berger for encouraging this
project and for sharing his personal notes that helped me resolve some technical Witt
vector issues.  In addition, this work has benefited from conversations with
Kiran Kedlaya, Bryden Cais, Joseph Gunther, Pierre Colmez, and Liang Xiao.

\section{Local log decay rings} \label{section log growth rings}
In this section we will establish several algebraic 
properties about rings of Laurent series 
whose tails decay logarithmically.  
Let $k$ be a finite field and let $K$ be the fraction field of the Witt vectors
of $k$.  We denote by $v_p$ the valuation on $K$ normalized so that $v_p(p)=1$.
Consider the sets of Laurent series:

\[  \mathcal{E}_{T,K} := \Big\{ \sum_{i=-\infty}^\infty a_iT^i  \in K[[T,T^{-1}]] \Big |  
\begin{array}  {l}
v_p(a_i) \text{ is bounded below and } \\
v_p(a_i) \to \infty   \text{ as } i \to -\infty 
\end{array}
\Big \}  \]

\[  \mathcal{E}_{T,K}^r := \Big\{ \sum_{n=-\infty}^\infty a_nT^n  \in  \mathcal{E}_{T,K} \Big |  
\begin{array}  {l}
 \text{ There exists $c$ such that } \\
v_p(a_n) - \frac{\log_p (-n)}{r} \geq c   \text{ for } n < 0 
\end{array}
\Big \}.  \]
When the choice of $T$ and $k$ is unambiguous we will drop
the subscripts and refer to $\mathcal{E}_{T,K}$ (resp
$\mathcal{E}_{T,K}^r$) as $\mathcal{E}$ (resp $\mathcal{E}^r$).  
The ring $\mathcal{E}_{T,K}$ is a field that comes with a discrete valuation
\[ v\big ( \sum a_n T^n \big ) = \inf (v_p(a_n)). \]
The valuation ring $\mathcal{O}_{\mathcal{E}_{T,K}}$ consists of elements in 
$\mathcal{E}_{T,K}$ whose coefficients are integral.  The 
residue field is $F:=k((T))$.  

Following Kedlaya (see \cite[Section 2.3]{Kedlaya2}) 
we will introduce naive partial valuations to help
us keep track of the growth of coefficients.  For $a(T) \in \mathcal{E}_{T,K}$
we define
\[\vnaive{a(T)}{n} = \min_{v_p(a_i)\leq n} \{i\}, \]
where $a(T)=\sum a_iT^i$.
Informally, we may think of $\vnaive{a(T)}{n}$ as the $T$-adic valuation
of the first $n$ terms when we $p$-adically expand $a(T)$.  In particular,
it is possible to write \[a(T) = \sum_{n>>-\infty}^\infty q_n(T)p^n,\]
where $q_n(T) \in \mathcal{O}_K((T))$ has $T$-adic valuation $\vnaive{a(T)}{n}$
and the term with the smallest exponent has a $p$-adic unit coefficient.
These partial valuations satisfy the following inequalities:
\[\vnaive{a(T)+b(T)}{n} \geq \min \{\vnaive{a(T)}{n}, \vnaive{b(T)}{n}\} \]
\[ \vnaive{a(T)b(T)}{n} \geq \min_{i + k = n} 
\{\vnaive{a(T)}{i} + \vnaive{b(T)}{k}\}.\]
Equality is obtained if the minimum is achieved exactly once.
The $r$-log-decay condition translates into a condition about
partial valuations:

\begin{lemma} \label{t-adic expansion} Let $a(T) \in \mathcal{E}_{T,K}$.  
Then $a(T) \in \mathcal{E}_{T,K}^r$ if and only if there exists $d>0$
such that $\vnaive{a(T)}{n} \geq -p^{rn}d$ for each $n$.
\end{lemma}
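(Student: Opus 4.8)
The plan is to simply unwind both definitions — the $r$-log-decay condition on the coefficients of $a(T)$ and the definition of the naive partial valuations — and observe that they are logically equivalent, with the logarithm on one side turning into a power of $p$ on the other, and the constants $c$ and $d$ related by $d = p^{-rc}$. Throughout, write $a(T) = \sum_i a_i T^i$, and recall that since $K$ is unramified over $\Q_p$ the quantities $v_p(a_i)$ are integers (or $+\infty$), so that it is legitimate to plug $n = v_p(a_i)$ into $\vnaive{\cdot}{n}$.

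For the forward implication I would assume $a(T) \in \mathcal{E}_{T,K}^r$, fix the constant $c$ with $v_p(a_i) - \frac{\log_p(-i)}{r} \geq c$ for all $i < 0$, and bound $\vnaive{a(T)}{n}$ for an arbitrary $n$. Let $i_0 = \vnaive{a(T)}{n}$, the least $i$ with $v_p(a_i) \leq n$. If no such $i$ exists then $i_0 = +\infty$ and there is nothing to check; if $i_0 \geq 0$ then $i_0 \geq 0 > -p^{rn}d$ for any $d > 0$. The substantive case is $i_0 < 0$: there the inequality $v_p(a_{i_0}) \leq n$ together with the log-decay bound gives $c + \tfrac{\log_p(-i_0)}{r} \leq n$, hence $-i_0 \leq p^{r(n-c)}$, i.e. $i_0 \geq -p^{rn}p^{-rc}$. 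So $d := p^{-rc} > 0$ works uniformly in $n$. For the converse I would assume $\vnaive{a(T)}{n} \geq -p^{rn}d$ for all $n$, take any $i < 0$ with $a_i \neq 0$ (the vanishing case being trivial), and set $n = v_p(a_i)$; then $\vnaive{a(T)}{n} \leq i < 0$ by definition, so $-p^{rn}d \leq i$, whence $p^{rn} \geq (-i)/d$ and $n \geq \frac{\log_p(-i) - \log_p d}{r}$, which rearranges exactly to $v_p(a_i) - \frac{\log_p(-i)}{r} \geq -\frac{\log_p d}{r}$, giving membership in $\mathcal{E}_{T,K}^r$ with $c = -\frac{\log_p d}{r}$.

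I do not expect a genuine obstacle here: the lemma is a bookkeeping translation between the two formulations, and the main computation is the single-line passage between $v_p(a_i) \ge c + \frac{\log_p(-i)}{r}$ and $-i \le p^{r(v_p(a_i)-c)}$. The only points requiring a little care are the degenerate cases in the forward direction (an empty index set, or a minimizing index that is already non-negative) and the observation that $v_p$ is $\Z$-valued so that $n = v_p(a_i)$ is an admissible level; once these are dispatched the equivalence is immediate.
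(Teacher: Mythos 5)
Your proof is correct and follows essentially the same route as the paper: the forward direction is exactly the paper's computation (plug the minimizing index into the log-decay inequality and solve, getting $d = p^{-rc}$), and your converse is the same substitution run backwards. In fact you are slightly more complete than the paper, which only writes out the forward implication and omits the degenerate cases and the converse that you handle explicitly.
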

\begin{proof}
Let $n>0$ and let $N$ be the smallest integer such that $v_p(a_N)\leq n$.   
Then $N = \vnaive{a(T)}{n}$.  Fix $c$ such that
\[v_p(a_k) - \frac{\log_p (-k)}{r} \geq c, \]
for all $k<0$. 
We may plug in $N$ for $k$ in this inequality and solve to get 
\[ \vnaive{a(T)}{n}  \geq -p^{r(n - c)} . \]
\end{proof}

\begin{lemma} $\mathcal{E}_{T,K}^r$ is a field.
\end{lemma}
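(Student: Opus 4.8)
The plan is to reduce quickly to a statement about the naive partial valuations $\vnaive{\cdot}{n}$, using Lemma \ref{t-adic expansion} as the dictionary, and then run a Newton/Hensel iteration while bookkeeping those valuations. First I would record that $\mathcal{E}_{T,K}^r$ is a subring of the field $\mathcal{E}_{T,K}$: combining Lemma \ref{t-adic expansion} with the superadditivity of $\vnaive{\cdot}{n}$ under sums and products (together with the elementary bound $\max_{i+j=n}(p^{ri}+p^{rj}) \le 2p^{rn}$) shows immediately that estimates of the shape $\vnaive{f}{n} \ge -p^{rn}d$ are preserved by addition and multiplication. It then remains to prove closure under inverses. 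Given $0 \ne a \in \mathcal{E}_{T,K}^r$, since $K \subset \mathcal{E}_{T,K}^r$ we may rescale by a power of $p$ and assume $v(a) = 0$, so that $a$ is a unit in the complete discrete valuation ring $\mathcal{O}_{\mathcal{E}_{T,K}}$, its inverse lies in $\mathcal{O}_{\mathcal{E}_{T,K}}$, and by Lemma \ref{t-adic expansion} we only need to exhibit $d > 0$ with $\vnaive{a^{-1}}{n} \ge -p^{rn}d$ for all $n$.

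To produce such a $d$, let $b_0$ be the Teichm\"uller lift of $\overline{a}^{-1} \in k((T))^\times$, so that $\vnaive{b_0}{n}$ equals the $T$-adic valuation $m$ of $\overline{a}^{-1}$ for every $n \ge 0$; then $ab_0 = 1 - pz$ for some $z \in \mathcal{O}_{\mathcal{E}_{T,K}}$, and $a^{-1} = b_0\sum_{k \ge 0} p^k z^k$, with the series converging $p$-adically. Since $a \in \mathcal{E}_{T,K}^r$, the product inequality applied to $ab_0$ gives $\vnaive{z}{n} = \vnaive{1-ab_0}{n+1} \ge -p^{rn}d_1$ for a suitable $d_1$, so $z \in \mathcal{E}_{T,K}^r$ as well. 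The crux is the estimate $\vnaive{z^k}{n} \ge -d_1\bigl(p^{rn}+k\bigr)$, which follows from the product inequality and the convexity fact that $\sum_{j=1}^k p^{ri_j}$, subject to $i_j \ge 0$ and $\sum i_j = n$, is maximized at a vertex of the simplex, where it equals $p^{rn}+(k-1)$. Multiplying by $p^k$ shifts the partial valuation index by $k$, so $\vnaive{p^kz^k}{n} = \infty$ for $k > n$ and $\vnaive{p^kz^k}{n} \ge -d_1(p^{rn}+n)$ for $k \le n$; recording once that $n \le Cp^{rn}$ for a fixed $C = C(p,r)$ and all $n$, this yields $\vnaive{p^kz^k}{n} \ge -d_1(1+C)p^{rn}$ uniformly in $k$, hence the same bound for $\sum_k p^kz^k$, and finally $\vnaive{a^{-1}}{n} = \vnaive{b_0\sum_k p^kz^k}{n} \ge -p^{rn}d$ for an appropriate $d$. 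Undoing the initial rescaling by $p$ then completes the proof.

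The only point requiring care is the passage to the infinite sum $\sum_k p^kz^k$, since the partial-valuation inequalities quoted in the text are stated for finite sums. Here I would observe that $p^kz^k$ has all of its coefficients of $p$-valuation at least $k$, so only the terms with $k \le n$ can influence the coefficients of $p$-valuation $\le n$ in the sum; consequently $\vnaive{\sum_k p^kz^k}{n} \ge \min_{0 \le k \le n}\vnaive{p^kz^k}{n}$, which is exactly what was used above. Everything else is a routine matter of tracking constants.
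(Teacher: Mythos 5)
Your proof is correct. The closure-under-multiplication step is essentially the paper's own: both of you reduce, via Lemma \ref{t-adic expansion}, to the superadditivity of the partial valuations together with the elementary bound $p^{ri}+p^{rj}=O(p^{rn})$ for $i+j=n$ (the paper normalizes so that $\vnaive{a(T)}{0}=0$ and keeps the same constant $c$, while you are content to let the constant grow, which the definition of $\mathcal{E}^r_{T,K}$ permits). Your treatment of inverses, however, is genuinely different. The paper proves $\vnaive{a^{-1}(T)}{n}\geq -cp^{rn}$ by induction on $n$: if the bound failed at stage $n$, the minimum in the product inequality for $\vnaive{1}{n}=\vnaive{a(T)a^{-1}(T)}{n}$ would be attained exactly once, forcing equality and a contradiction. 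You instead build the inverse explicitly as $b_0\sum_{k\geq 0}p^kz^k$ and bound each term directly; this requires the iterated-product estimate obtained from convexity of $x\mapsto p^{rx}$ on the simplex $\sum i_j=n$, plus the (correct) observation that only the terms with $k\leq n$ can influence $\vnaive{\cdot}{n}$, so the inequalities stated for finite sums suffice. Your route avoids any appeal to the equality case of the partial-valuation inequalities, which is the one delicate point in the paper's induction, at the price of heavier constant-tracking; it is also constructive, exhibiting $a^{-1}$ as a concrete $p$-adically convergent series whose $r$-log-decay is verified term by term. Both arguments are sound and give the same conclusion.
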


\begin{proof} The only thing to check is that this set is closed under multiplication
and inverses.
Consider two elements of $a(T),b(T) \in \mathcal{E}_{T,K}^r$.  After multiplying
by powers of $p$ we may assume that both elements are
in $\mathcal{O}_{\mathcal{E}_{T,K}}$.  Similarly we may multiply both
elements by powers of $T$ to ensure that their residue in $F$ is a
unit in $k[[T]]$.  In particular $\vnaive{a(T)}{0} = \vnaive{b(T)}{0}=0$.  
Let $c$ be large enough so that 
\[ \vnaive{a(T)}{n},\vnaive{b(T)}{n} > -cp^{rn}.\]
Then we have
\[ \vnaive{a(T)b(T)}{n} \geq \min_{i+j=n}\{ \vnaive{a(T)}{i} + 
\vnaive{b(T)}{j}\}.\]
When $i$ or $j$ is equal to $n$ then the term in the minimum is at
least $-cp^{rn}$.  Otherwise we have $0<i,j<n$, which gives
\[-cp^{ri} + -cp^{rj} \geq -cp^{rn}. \]
Therefore $a(T)b(T) \in \mathcal{E}_{T,K}^r$.

Since $\mathcal{E}_{T,K}$ is a field, we know that $a(T)$ has an inverse $a^{-1}(T)$
contained in $\mathcal{E}_{T,K}$.  
We only need to check that $a^{-1}(T)$ satisfies the $r$-log-decay condition. 
Once again we
assume that $a(T) \in \mathcal{O}_{\mathcal{E}}^\times$ and that the residue
of $a(T)$ is a unit in $k[[T]]$ by multiplying by some
powers of $p$ and $T$.  In particular $\vnaive{a(T)}{0}=0$.
Let $c>0$ be large enough so that
$\vnaive{a(T)}{n} \geq -cp^{rn}$.  We will prove 
$\vnaive{a^{-1}(T)}{n}\geq -cp^{nr}$ by induction on $n$.  When $n=0$
we find that $\vnaive{a^{-1}(T)}{0}=0$ from the multiplicative
inequality and the fact that $\vnaive{1}{0}=0$.  Now assume
$\vnaive{a^{-1}(T)}{j} \geq -cp^{jr}$ for $j<n$.  
We have
\[\vnaive{1}{n}=0 \geq  \min_{i+j=n} \{\vnaive{a(T)}{i} + \vnaive{a^{-1}(T)}{j}\}.\]
When $i=n$ the term in the minimum function is $-cp^{rn}$
and when $0<i<n$ we have \[-cp^{ri} + -cp^{rj} \geq -cp^{rn}. \]
Therefore if $\vnaive{a^{-1}(T)}{n}< -cp^{rn}$ the minimum would
only be obtained exactly once, which yields a contradiction.

\end{proof}
We make $\mathcal{E}_{T,K}^r$ into a valued field by restricting $v$ 
to $\mathcal{E}_{T,K}^r$ and let we
$\mathcal{O}_{\mathcal{E}_{T,K}^r}$ denote the valuation ring.  Note that
$\mathcal{O}_{\mathcal{E}_{T,K}^r}$ consists of the series in $\mathcal{E}_{T,K}^r$ with
integral coefficients.  The maximal ideal is $p\mathcal{O}_{\mathcal{E}_{T,K}^r}$
 and the residue field is 
$F$.  By a Proposition of Matsuda (see \cite[Proposition 2.2]{Matsuda1})
we see that $(\mathcal{O}_{\mathcal{E}_{T,K}^r},p\mathcal{O}_{\mathcal{E}_{T,K}^r})$
is a Henselian pair.  This allows us to deduce the following Lemma about
unramified extensions of $\mathcal{E}_{T,K}^r$.

\begin{lemma} \label{log extensions}
Let $L$ be a finite extension of $F$ and let $k'$ be the residue field of $L$
(so that $k'$ is a finite extension of $k$).  Let $K'$ be the unramified extension
of $K$ whose residue field is $k'$.  There is a unique unramified
extension $E$ of $\mathcal{E}_{T,K}^r$ whose residue field is $L$.  If
$U \in E$ reduces to a unifomrizing element of $L$ then 
$E= \mathcal{E}_{U,K'}$.
\end{lemma}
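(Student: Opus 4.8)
The plan is to prove existence and uniqueness of the unramified extension $E/\mathcal{E}_{T,K}^r$ with residue field $L$ by exhibiting an explicit candidate, namely $\mathcal{E}_{U,K'}$ for a suitable lift $U$ of a uniformizer of $L$, and then using the Henselian property to pin it down. First I would recall that $F = k((T))$ and that the finite separable extension $L/F$ is monogenic: write $L = k'((\pi))$ where $\pi$ is a uniformizer, and since $L/F$ is separable it is generated by a single element $\theta$ whose minimal polynomial $\bar{g}(Y) \in F[Y]$ is separable of degree $[L:F]$. Lift $\bar g$ to a monic polynomial $g(Y) \in \mathcal{O}_{\mathcal{E}_{T,K}^r}[Y]$ of the same degree. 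Since $(\mathcal{O}_{\mathcal{E}_{T,K}^r}, p\mathcal{O}_{\mathcal{E}_{T,K}^r})$ is a Henselian pair by Matsuda's proposition (as already noted in the excerpt just before the Lemma), and $\bar g$ is separable over the residue field $F$, Hensel's lemma produces a (unique up to isomorphism) finite étale $\mathcal{O}_{\mathcal{E}_{T,K}^r}$-algebra $\mathcal{O}_E = \mathcal{O}_{\mathcal{E}_{T,K}^r}[Y]/(g(Y))$, which is a complete discrete valuation ring (local, since $F$ is a field and $\bar g$ is irreducible) with maximal ideal $p\mathcal{O}_E$ and residue field $L$. Set $E = \mathcal{O}_E[1/p]$; this is the desired unramified extension, and uniqueness is immediate from the Henselian-pair lifting property — any two unramified extensions with residue field $L$ have isomorphic rings of integers over $\mathcal{O}_{\mathcal{E}_{T,K}^r}$, hence are isomorphic.

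The substantive part is the concrete identification $E = \mathcal{E}_{U,K'}$. Given the abstract $E$ constructed above, pick any $U \in \mathcal{O}_E$ reducing to the uniformizer $\pi$ of $L$. I would first observe that $\mathcal{E}_{U,K'}$ makes sense: $K'$ embeds into $E$ because the residue field extension $k'/k$ is itself unramified and lifts (by the same Henselian argument, or simply because $K'/K$ is unramified and $\mathcal{O}_{\mathcal{E}_{T,K}^r} \supset \mathcal{O}_K$ is a Henselian pair over $\mathcal{O}_K$), and one checks that the power series ring construction $\mathcal{E}^r_{U,K'}$ — Laurent series in $U$ over $K'$ with the logarithmic decay condition on the $U$-adic tail — is a well-defined subfield of $E$, using that $\mathcal{O}_E$ is $p$-adically complete and separated and that reduction mod $p$ sends $\mathcal{O}_{\mathcal{E}_{U,K'}^r}$ onto $k'[[U]] = \mathcal{O}_L$. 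The key point is that the decay condition is intrinsic: since $U$ reduces to a uniformizer and $K'$ to the residue field, the partial valuations $\vnaive{\cdot}{n}$ computed with respect to the $(U,K')$-expansion agree with those computed with respect to the extension structure, so Lemma~\ref{t-adic expansion} applies verbatim. Then $\mathcal{E}_{U,K'}^r \hookrightarrow E$ is an inclusion of fields; both are unramified extensions of $\mathcal{E}_{T,K}^r$ (for the left side, one checks directly that $[\mathcal{E}_{U,K'}^r : \mathcal{E}_{T,K}^r]$ equals the residue degree $[L:F]$, using that $U$ satisfies $g(U) = 0$ so $\mathcal{O}_{\mathcal{E}_{U,K'}^r}$ is generated over $\mathcal{O}_{\mathcal{E}_{T,K}^r}$ by $U$ with $[L:F]$ relations), and having the same residue field $L$ and the same degree forces equality $\mathcal{E}_{U,K'}^r = E$. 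Combined with a parallel computation over $\mathcal{E}$ — which is Fontaine's classical fact that $\mathcal{E}_{U,K'}$ is the unramified extension of $\mathcal{E}_{T,K}$ with residue field $L$ — one then upgrades to the statement $E = \mathcal{E}_{U,K'}$ as written (that is, $E$ with $p$ inverted), since $\mathcal{E}_{U,K'}^r \otimes_{\mathcal{E}_{T,K}^r} \mathcal{E}_{T,K} = \mathcal{E}_{U,K'}$ by faithfully flat base change along the unramified map.

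The main obstacle I anticipate is verifying cleanly that the ring $\mathcal{E}_{U,K'}^r$ — defined using the \emph{new} coordinate $U$ and \emph{new} coefficient field $K'$ — actually coincides with the ring of integers $\mathcal{O}_E$ of the abstractly-constructed Henselian extension, rather than merely being contained in it. This is a "change of uniformizer" issue: one must show that if $U' = U(1 + p(\cdots)) + (\text{higher } U\text{-order})$ is another lift of a uniformizer, the log-decay condition is insensitive to the choice, and more seriously that every element of $\mathcal{O}_E$ admits a convergent $(U,K')$-expansion satisfying the decay bound. The tool for this is exactly the multiplicative and additive inequalities on $\vnaive{\cdot}{n}$ together with the argument already used in the proof that $\mathcal{E}_{T,K}^r$ is a field (the inductive estimate $-cp^{ri} - cp^{rj} \geq -cp^{rn}$ for $0 < i,j < n$): one runs a successive-approximation/Hensel iteration to solve $g(U) = 0$ inside $\mathcal{O}_{\mathcal{E}_{T,K}^r}[[U]]$ and tracks, at each stage, that the correction terms satisfy the $p^{rn}$-type bound, so that the limit lies in $\mathcal{O}_{\mathcal{E}_{U,K'}^r}$ and not just in $\mathcal{O}_{\mathcal{E}_{U,K'}}$. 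Once this convergence-with-bounds bookkeeping is in place, the rest is formal.
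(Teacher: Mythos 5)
Your proposal is correct and follows the same route the paper intends: the paper offers no written proof of this lemma at all, deducing it directly from Matsuda's result that $(\mathcal{O}_{\mathcal{E}_{T,K}^r}, p\mathcal{O}_{\mathcal{E}_{T,K}^r})$ is a Henselian pair, which is exactly your first step. Your remaining paragraphs supply the identification $E = \mathcal{E}^r_{U,K'}$ (the paper's $\mathcal{E}_{U,K'}$ is evidently shorthand, or a typo, for the log-decay ring in the new parameter) together with the change-of-uniformizer bookkeeping the paper leaves implicit, and your plan for that --- a Hensel iteration tracking the $p^{rn}$-type bounds via the partial-valuation inequalities, followed by a degree count --- is sound and consistent with the techniques used elsewhere in the paper.
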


Lemma \ref{log extensions} tells us that $\mathcal{E}_{T,K}$ does not depend on the lifting of
a parameter of the residue field.  
Similarly, $K$ only depends on the constants of the residue
field.  This means that $\mathcal{E}_{T,K}$ only depends on the residue field.  We will therefore
often refer to $\mathcal{E}^r_{T,K}$ as $\mathcal{E}^r_F$, to indicate the dependence on the 
residue field.  This will be particularly helpful in Section \ref{section Main result},
where we will consider $\mathcal{E}_{F_n}$ for towers of fields $F_n$ over $F$.

\section{$(\phi,\nabla)$-modules over $\mathcal{E}_T$, $\mathcal{E}_{T,K}^r$ and $\mathcal{E}_T^\dagger$} \label{section phi nabla modules}

Let $R$ be either $\mathcal{E}_T$, $\mathcal{E}_{T,K}^r$ or $\mathcal{E}_T^\dagger$.  
The aim of this section is to define $(\phi,\nabla)$-modules over $R$.
Roughly, these are differential modules over $R$
whose derivation is compatible with a Frobenius semi-linear map.  
\begin{definition}
A ring endomorphism $\sigma$ of $R$ is a \emph{Frobenius} if it
induces the Frobenius morphism on $W(k) \subset R$ and
reduces to the Frobenius morphism modulo $p$ :
\[ \sigma(a) \equiv a^q \mod p. \]
A $\phi$-module for $\sigma$ is an $R$-module $M$
equip with a $\sigma$-semilinear endomorphism $\phi: M \to M$
whose linearization is an isomorphism.  More precisely,
we have $\phi(am)=\sigma(a)\phi(m)$ for $a\in R$
and $\sigma^* \phi: R \otimes_{\sigma} M \to M$
is an isomorphism.  We say that $M$ is \'etale or unit-root
if the slopes, in the sense of Dieudonné-Manin, are all zero
(see \cite[Section 5.2]{Kedlaya2} for a more thorough
explanation).

\end{definition}
A Frobenius $\sigma$ on $\mathcal{E}$ descends to $\mathcal{E}_{T,K}^r$ (resp. $\mathcal{E}_T^\dagger$)
if and only if $\sigma(T) \in \mathcal{E}_{T,K}^r$ (resp. $\mathcal{E}_T^\dagger$).
Some examples are the maps induced by $T \to T^q$ or $T \to (1 + T)^q - 1$.

\begin{definition}
Let $\Omega_R$ be the module of differentials of $R$ over $K$.  In
particular $\Omega_R = R dT$.  We define the $\delta_T: R \to \Omega_R$ to be
the derivative $\frac{d}{dT}$.  A $\nabla$-module over $R$
is an $R$ module $M$ equip with a connection.  That is,
 $M$ comes with a $K$-linear map $\nabla: M \to \Omega_R$
 satisfying the Liebnitz rule: $\nabla(am) = \delta_T(a)m + a\nabla(m)$.
\end{definition}

We may view a $\nabla$-module $M$ over $R$ as a differential equation
over $R$ by considering the equation $\nabla(x)=0$.  If
$M$ is free of rank $d$ we may view it as a first order
differential equation in $d$ variables or a $d$-th order
differential equation in one variable by picking a cyclic vector.
Now we may introduce $(\phi,\nabla)$-modules, which is roughly
an $R$-module with $\phi$ and $\nabla$ structures that
are compatible.

\begin{definition} By abuse of notation, define
$\sigma: \Omega_R \to \Omega_R$ be the map induced 
by pulling back the differential along $\sigma$.  In particular 
\[ \sigma(f(T)dT)=\sigma(f(T))d\sigma(T).\]
A $(\phi, \nabla)$-module
$M$ is an $R$-module that is both a $\phi$-module and a $\nabla$-module
with the following compatibility condition:

\begin{center}
\begin{tikzcd}

M \arrow{d}{\phi} \arrow{r}{\nabla} &
M \otimes \Omega_R \arrow{d}{\phi \otimes \sigma} \\

M \arrow{r}{\nabla} & M \otimes \Omega_R .
\end{tikzcd}
\end{center}
We denote the category of $(\phi, \nabla)$-modules over $R$ by $\Mphi{R}$.
An $(\phi, \nabla)$-module is called \'etale if all of its slopes are $0$.  
We denote the subcategory of $\Mphi{R}$ consisting of \'etale $(\phi, \nabla)$-modules
as $\Mphiet{R}$.
\end{definition}

\section{Period rings with growth properties} \label{section period rings}
\subsection{Large period rings and partial valuations}
Let $\tb{A}$ denote the ring of Witt vectors over 
the completion of $F^{alg}$
and let $\tb{B} = \tb{A}[\frac{1}{p}]$.  We will let $\bold{Frob}$
denote the Frobenius endomorphism of $\tb{A}$.  Each
$x \in \tb{B}$ can be written as \[ \sum_{n\gg -\infty}^\infty [x_n]p^n, \] where
$[x_n]$ is the Teichmuller lift of $x_n \in F^{alg}$.  We 
define maps $w_k: \tb{A} \to \R\cup \infty$
by $w_k(x) = \min_{n\leq k} v_T(x_n)$.  These maps satisfy the following
inequalities:
\begin{eqnarray} 
w_k(x+y) & \geq & \min(w_k(x),w_k(y)) \label{Colmez ineq one: 1} \\
w_k(xy) & \geq & \min_{i+j\leq k} (w_i(x) + w_j(y)) \label{Colmez ineq two: 2}.
\end{eqnarray}
If the minimum is obtained exactly once there is equality.  
For large $k$ these are related to the naive partial valuations $v_k$ 
(see \cite{Kedlaya2}).
We may also define maps $w_k: M_{d\times e}(\tb{A}) \to \R \cup \infty$
that send the matrix $A=(a_{i,j})$ to $\min (w_k(a_{i,j}))$.  Equivalently, if
$A$ has the Teichmuller expansion \[ \sum_{n\gg -\infty}^\infty [A_n]p^n, \]
with $A_n \in M_{d \times e}(F^{alg})$, then $w_k(A) = \min_{n\leq k} v_T(A_n)$,
where $v_T(A_n)$ is the smallest valuation occuring in the entries of $A_n$.  
For
$A \in M_{d \times e}(\tb{A})$  and $B \in M_{e \times f}(\tb{A})$ we have the same inequalities:
\begin{eqnarray*}
w_k(A+B) & \geq & \min(w_k(A),w_k(B))   \\
w_k(AB) & \geq & \min_{i+j\leq k} (w_i(A) + w_j(B)).
\end{eqnarray*}

\subsection{Log decay period rings}
Our
period rings will be subrings of $\tb{B}$ with
growth conditions on the valuations of the Teichmuller
representatives.  
Informally $\tb{B}^r$ is the subset of $\tb{B}$ 
consisting of elements where the growth of $-v_T(x_n)$
is no faster than $p^{rn}$.  Similarly, 
$\tb{B}^\dagger$ is the subset of $\tb{B}$
where the growth of $v_T(x_n)$ is bounded by some linear
function (i.e. the overconvergent elements).    
More
precisely
\[  \tb{B}^r := \Big\{ \sum_{n\gg-\infty}^\infty [x_n]p^n  \in \tb{B} ~ \Big|  ~
\begin{array} {l}
 \text{ There exists $c>0$ such that } \\
v_T(x_n) \geq -p^{nr}c   
\end{array}
\Big \}  \] 

\[  \tb{A}^{r,c} := 
\Big\{ \sum_{n = 0}^\infty [x_n]p^n  \in \tb{B}^r ~ \Big|  ~
p^{nr}c + v_T(x_n) \geq c
\Big \}  \] 

\[  \tb{B}^\dagger :=
\Big\{\sum_{n\gg-\infty}^\infty [x_n]p^n  \in \tb{B}  ~ \Big|  ~ 
\begin{array}  {l}
 \text{ There exists $c>0$ such that } \\
nc + v_T(x_n) \gg -\infty   \text{ as } 
n \to \infty 
\end{array}
\Big \}.  \]
In addition we define $\tb{A}^r=\tb{A}\cap \tb{B}^r$ and
$\tb{A}^\dagger = \tb{A} \cap \tb{B}^\dagger$.  We remark
that $\tb{A}^{r,c} \subset \tb{A}^r$.  It is
not obvious a priori that these sets are subrings of $\tb{B}$.

\begin{lemma} \label{subrings} The sets $\tb{B}^r$, $\tb{A}^r$, $\tb{A}^{r,c}$, $\tb{B}^\dagger$,
and $\tb{A}^\dagger$ are subrings of $\tb{B}$.
\end{lemma}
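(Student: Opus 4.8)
The plan is to verify that each of the five sets is closed under addition and multiplication, using the inequalities \eqref{Colmez ineq one: 1} and \eqref{Colmez ineq two: 2} for the maps $w_k$, together with the fact that these $w_k$ control the Teichmüller valuations $v_T(x_n)$ for $x = \sum [x_n]p^n$. The key observation, parallel to the argument already given in the proof that $\mathcal{E}_{T,K}^r$ is a field, is that a bound of the shape $v_T(x_n) \geq -p^{nr}c$ for all $n$ is equivalent (after renaming the constant) to a bound $w_n(x) \geq -p^{nr}c'$ for all $n$, since $w_n(x) = \min_{m \leq n} v_T(x_m) \geq \min_{m \leq n}(-p^{mr}c) = -p^{nr}c$; conversely $w_n(x) \geq -p^{nr}c$ gives $v_T(x_n) \geq w_n(x) \geq -p^{nr}c$. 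So membership in $\tb{B}^r$ can be tested on the $w_k$, and the $w_k$ behave well under ring operations.

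First I would handle $\tb{B}^r$. Given $x, y \in \tb{B}^r$, pick $c$ large enough that $w_n(x), w_n(y) \geq -p^{nr}c$ for all $n$. For the sum, inequality \eqref{Colmez ineq one: 1} gives $w_n(x+y) \geq \min(w_n(x), w_n(y)) \geq -p^{nr}c$. For the product, inequality \eqref{Colmez ineq two: 2} gives $w_n(xy) \geq \min_{i+j \leq n}(w_i(x) + w_j(y)) \geq \min_{i+j \leq n}(-p^{ir}c - p^{jr}c)$, and since $p^{ir} + p^{jr} \leq p^{(i+j)r} \leq p^{nr}$ whenever $i, j \geq 0$ and $i + j \leq n$ (using $r > 0$ and the superadditivity of $t \mapsto p^{tr}$ on nonnegative integers, exactly as in the earlier field proof — one must first absorb any finitely many negative-index Teichmüller terms into a shift by a power of $p$ or simply note $w_k$ is nondecreasing so the relevant indices are $\geq 0$ after normalizing), this is $\geq -p^{nr}c$. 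Hence $xy \in \tb{B}^r$. The same two inequalities immediately give that $\tb{A}^r = \tb{A} \cap \tb{B}^r$ is a subring, being the intersection of two subrings. For $\tb{A}^{r,c}$ the identical computation works verbatim since the defining inequality $p^{nr}c + v_T(x_n) \geq c$ is again of the additive/superadditive type, with the normalized bookkeeping slightly cleaner because the index runs over $n \geq 0$.

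For $\tb{B}^\dagger$ and $\tb{A}^\dagger$ the argument is the same but easier, since the relevant growth function $n \mapsto nc$ is genuinely additive: if $w_n(x) \geq -nc + C_1$ and $w_n(y) \geq -nc + C_2$ eventually, then \eqref{Colmez ineq one: 1} handles the sum and \eqref{Colmez ineq two: 2} gives $w_n(xy) \geq \min_{i+j\leq n}(-ic + C_1 - jc + C_2) \geq -nc + (C_1 + C_2)$, so $xy \in \tb{B}^\dagger$; intersecting with $\tb{A}$ gives $\tb{A}^\dagger$. The one point requiring a little care — and the only real obstacle — is the normalization step: the Teichmüller expansions of general elements of $\tb{B}$ start at some finite negative index $n_0$, and one wants the superadditivity estimate $p^{ir} + p^{jr} \leq p^{(i+j)r}$ to apply, which strictly speaking needs $i, j \geq 1$ (or $\geq 0$ with one of them controlled). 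This is dealt with exactly as in the proof that $\mathcal{E}_{T,K}^r$ is a field: multiply by an appropriate power of $p$ to move into $\tb{A}$, note $w_k$ is constant and equal to $v_T(x_0)$-type data for $k$ below the starting index, and split the minimum in \eqref{Colmez ineq two: 2} into the boundary terms $i = n$ or $j = n$ (bounded by $-p^{nr}c$ directly) and the interior terms $0 < i, j < n$ (bounded by superadditivity). Once this bookkeeping is set up once, all five cases follow from the two displayed $w_k$-inequalities with no further work.
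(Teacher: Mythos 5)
Your overall strategy---testing membership via the maps $w_k$, handling sums with (\ref{Colmez ineq one: 1}) and products with (\ref{Colmez ineq two: 2}), and controlling $\min_{i+j\le n}(w_i+w_j)$ by an estimate on $p^{ir}+p^{jr}$---is exactly the paper's. The problem is the numerical estimate you lean on. You invoke ``superadditivity'' $p^{ir}+p^{jr}\le p^{(i+j)r}$ and correctly flag that it fails when $i=0$ or $j=0$, proposing to split those boundary terms off. But it also fails in the interior whenever $p^r<2$: for $i=j=1$ it asserts $2p^r\le p^{2r}$, i.e.\ $p^r\ge 2$, and the lemma is stated for every real $r>0$. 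The correct statement, and the one the paper's proof actually uses, is the convexity bound $p^{ir}+p^{jr}\le p^{nr}+1$ for all $i,j\ge 0$ with $i+j\le n$ (the function $i\mapsto p^{ir}+p^{(n-i)r}$ is convex, hence maximized at the endpoints $i\in\{0,n\}$).

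The extra $+1$ is harmless for $\tb{B}^r$, $\tb{A}^r$, $\tb{B}^\dagger$ and $\tb{A}^\dagger$, where the constant is existentially quantified and can simply be doubled, so those cases of your plan survive the correction. It is precisely the case you wave off as ``verbatim,'' namely $\tb{A}^{r,c}$, where it matters: there $c$ is fixed, and the reason the product lands back in $\tb{A}^{r,c}$ rather than in some $\tb{A}^{r,c'}$ with $c'>c$ is that the defining bound has the offset form $v_T(x_n)\ge c-p^{nr}c$, so that
\[
w_i(x)+w_j(y)\;\ge\;2c-c\,(p^{ir}+p^{jr})\;\ge\;2c-c\,(p^{nr}+1)\;=\;c-p^{nr}c .
\]
The additive $c$ built into the definition is there exactly to absorb the $+1$ from the convexity bound; your superadditivity route would not close this case for small $r$. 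With the inequality replaced as above, the rest of your outline (normalizing by powers of $p$, the genuinely additive argument for the dagger rings) goes through as in the paper.
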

\begin{proof}
The sets $\tb{B}^\dagger$ and $\tb{A}^\dagger$ are commonplace in $p$-adic
Hodge theory and are known to be rings.  The rest of the assertion
can be proven from the inequalities (\ref{Colmez ineq one: 1}) and
(\ref{Colmez ineq two: 2}).  We provide
a proof for $\tb{A}^{r,c}$ as an example.  Let 
\[x=\sum_{n = 0}^\infty [x_n]p^n ~~ \text{and} ~~ y = \sum_{n = 0}^\infty [y_n]p^n\]
 be two elements
of $\tb{A}^{r,c}$.  Inequality (\ref{Colmez ineq one: 1}) immediately gives $x+y\in\tb{A}^{r,c}$
so we need to show, so we need to show that $xy \in \tb{A}^{r,c}$.
Our definition of $\tb{A}^{r,c}$ implies that
$w_n(x),w_n(y)\geq c - p^{nr}c$ for all $n$.
Let $i,j\geq 0$ be whole numbers such that $i+j\geq n$ and 
$w_n(xy)\geq w_i(x)+w_j(y)$.  Note that $p^{nr}+1\geq p^{ir}+p^{jr}$.
This together with inequality (\ref{Colmez ineq two: 2}) gives
\begin{eqnarray*}
w_n(xy) & \geq & w_i(x)+w_j(y) \\
& \geq & 2c - c(p^{ir} - p^{jr}) \\
& \geq & 2c - c(p^{nr} + 1) \\
& = & c - p^{nr}c,
\end{eqnarray*}
which finishes our claim.
\end{proof}

\begin{lemma} \label{Frobenius lemma} The Frobenius map $\bold{Frob}$
on $\tb{B}$ induces isomorphism on $\tb{B}^\dagger, \tb{A}^\dagger,
\tb{B}^r$, and $\tb{A}^r$.  Furthermore, it induces an isomorphism
\[ \bold{Frob}: \tb{A}^{r,c} \stackrel{\sim}{\longrightarrow} \tb{A}^{r,pc}. \]
\end{lemma}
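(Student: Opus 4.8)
The claim is that $\mathbf{Frob}$ induces isomorphisms on each of $\tb{B}^\dagger$, $\tb{A}^\dagger$, $\tb{B}^r$, $\tb{A}^r$, and carries $\tb{A}^{r,c}$ isomorphically onto $\tb{A}^{r,pc}$. The plan is to compute directly how $\mathbf{Frob}$ and its inverse act on Teichm\"uller expansions and then read off the effect on the defining inequalities. The key observation is the formula $\mathbf{Frob}\big(\sum [x_n]p^n\big) = \sum [x_n^p]p^n$, so on Teichm\"uller coordinates $\mathbf{Frob}$ simply replaces $x_n$ by $x_n^p$, hence $v_T(x_n) \mapsto p\cdot v_T(x_n)$; the inverse map replaces $x_n$ by $x_n^{1/p}$ (this uses that the residue field $F^{alg}$ is perfect), so $v_T(x_n)\mapsto \tfrac1p v_T(x_n)$. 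Both maps are clearly additive and multiplicative on $\tb{B}$ (this is just the statement that $\mathbf{Frob}$ is a ring automorphism of $\tb{B}$), so the only thing to check is that they preserve the relevant subrings.

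First I would handle $\tb{B}^r$. If $x=\sum[x_n]p^n\in\tb{B}^r$ with $v_T(x_n)\geq -p^{nr}c$, then $\mathbf{Frob}(x)$ has coordinates $x_n^p$ with $v_T(x_n^p)=p\, v_T(x_n)\geq -p^{nr}(pc)$, so $\mathbf{Frob}(x)\in\tb{B}^r$ (with constant $pc$); conversely $\mathbf{Frob}^{-1}(x)$ has coordinates $x_n^{1/p}$ with $v_T(x_n^{1/p})=\tfrac1p v_T(x_n)\geq -p^{nr}c$, so $\mathbf{Frob}^{-1}(x)\in\tb{B}^r$ as well. Hence $\mathbf{Frob}$ restricts to a bijection on $\tb{B}^r$, and since it is already a ring homomorphism it is a ring isomorphism. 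Intersecting with $\tb{A}$ (which $\mathbf{Frob}$ obviously preserves, being the Witt vectors with $n\geq 0$) gives the statement for $\tb{A}^r$. For the statement about $\tb{A}^{r,c}$: an element $x=\sum_{n\geq 0}[x_n]p^n$ lies in $\tb{A}^{r,c}$ iff $p^{nr}c + v_T(x_n)\geq c$ for all $n\geq 0$. Applying $\mathbf{Frob}$ replaces $v_T(x_n)$ by $p\,v_T(x_n)$; multiplying the defining inequality by $p$ gives $p^{nr}(pc) + p\,v_T(x_n)\geq pc$, which is exactly membership in $\tb{A}^{r,pc}$. The same computation run backwards (dividing by $p$, using that $\tfrac1p v_T(x_n)$ is the $T$-valuation of the $n$-th coordinate of $\mathbf{Frob}^{-1}(x)$) shows $\mathbf{Frob}^{-1}$ carries $\tb{A}^{r,pc}$ into $\tb{A}^{r,c}$, so $\mathbf{Frob}:\tb{A}^{r,c}\xrightarrow{\sim}\tb{A}^{r,pc}$.

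Finally, for $\tb{B}^\dagger$ and $\tb{A}^\dagger$ the argument is identical in spirit but with the linear rather than exponential bound: if $nc + v_T(x_n)\to\infty$, then $nc + v_T(x_n^p) = nc + p\,v_T(x_n) = n(c-\tfrac1p\cdot 0)\ldots$ — more cleanly, from $nc'+v_T(x_n)\to\infty$ for some $c'>0$ one gets $n(pc') + v_T(x_n^p) = p(nc' + v_T(x_n))\to\infty$, and for the inverse $n(c'/p) + v_T(x_n^{1/p}) = \tfrac1p(nc' + v_T(x_n))\to\infty$; in both cases the "there exists $c>0$" quantifier absorbs the factor of $p$. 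So $\mathbf{Frob}$ and $\mathbf{Frob}^{-1}$ preserve $\tb{B}^\dagger$, hence also $\tb{A}^\dagger=\tb{A}\cap\tb{B}^\dagger$. (Alternatively one may just cite that $\tb{B}^\dagger,\tb{A}^\dagger$ are standard objects in $p$-adic Hodge theory on which $\mathbf{Frob}$ is well known to act, as Lemma \ref{subrings} already does.)

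**Main obstacle.** There is no serious obstacle — the whole lemma is bookkeeping with the formula $\mathbf{Frob}[x_n]=[x_n^p]$. The one point requiring a word of care is the description of $\mathbf{Frob}^{-1}$ on Teichm\"uller coordinates: this is where perfectness of $F^{alg}$ (equivalently, that $\mathbf{Frob}$ is already known to be an automorphism of $\tb{A}$ and $\tb{B}$, not merely an endomorphism) is used, and one should make sure the reader sees that the inverse scales $T$-valuations by $1/p$ so that the same inequalities, with the $c$-quantifier, are preserved. I would state that coordinate formula explicitly at the start of the proof and then the five cases each reduce to multiplying or dividing the defining inequality by $p$.
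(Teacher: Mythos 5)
Your proof is correct and follows essentially the same route as the paper: the author likewise uses $\mathbf{Frob}\big(\sum[x_n]p^n\big)=\sum[x_n^p]p^n$ to see that the defining inequality for $\tb{A}^{r,c}$ gets multiplied by $p$ (landing in $\tb{A}^{r,pc}$), invokes perfectness of $F^{alg}$ to run the same computation for $\mathbf{Frob}^{-1}$, and dismisses the remaining rings as similar or standard. The only difference is that you spell out the $\tb{B}^r$, $\tb{A}^r$, $\tb{B}^\dagger$, $\tb{A}^\dagger$ cases explicitly where the paper defers to the literature; no gap either way.
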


\begin{proof} Let $x= \sum_{n=0}^\infty [x_n]p^n \in \tb{A}^{r,c}$.  Then
$\bold{Frob}(x)= \sum_{n=0}^\infty[x_n^p]p^n$.  We find
\begin{eqnarray*}
v_T(x_n^p)-(cp)p^{nr} & = & p(v_T(x_n) - cp^{nr}) \\
& \geq & pc,
\end{eqnarray*}
so that $\bold{Frob}: \tb{A}^{r,c} \hookrightarrow \tb{A}^{r,pc}$.  
As $F^{alg}$ is perfect we may consider $\bold{Frob}^{-1}$ and
similar inequalities show $\bold{Frob}^{-1}: \tb{A}^{r,pc} \hookrightarrow \tb{A}^{r,c}$.
This proves the claim for $\tb{A}^{r,c}$.  The
proof for the other rings are similar and are standard in the overconvergent
case (see \cite[I.3]{Berger}).

\end{proof}

\begin{theorem} \label{Embedding Theorem} 
Let $\sigma$ be a Frobenius endomorphism of $\mathcal{E}^r$.  
Then there exists a unique embedding 
\[ i_\sigma: \mathcal{E}^r \hookrightarrow \tb{B}^r\]
satisfying:
\begin{itemize}
\item $i_\sigma$ is a morphism of $K$-algebras
\item $i_\sigma$ induces the inclusion $F \hookrightarrow F^{alg}$ on residue fields
\item $i_\sigma$ commutes with Frobenius (i.e. 
$i_\sigma \circ \sigma = \bold{Frob} \circ i_\sigma$)
\end{itemize}
\end{theorem}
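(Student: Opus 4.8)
The plan is to mimic the classical construction of the Fontaine embedding $\mathcal{E} \hookrightarrow \tb{B}$ (and its overconvergent refinement $\mathcal{E}^\dagger \hookrightarrow \tb{B}^\dagger$) while keeping careful track of the growth of Teichmüller coordinates to land inside $\tb{B}^r$. First I would reduce to the case where $\sigma$ is a chosen ``standard'' Frobenius, say the one induced by $T \mapsto T^q$: indeed two Frobenii on $\mathcal{E}^r$ differ by an automorphism, and once one compatible embedding is built, any other is obtained by twisting, so uniqueness for one $\sigma$ plus functoriality gives the general statement. For the standard $\sigma$, the Frobenius-compatibility forces the value of $i_\sigma$ on $T$: we need an element $t \in \tb{A}^r$ with $t \equiv T \bmod p$ (i.e. $[t_0] = [T]$ up to higher-order terms, so that it reduces to a uniformizer of $F$) and $\bold{Frob}(t) = \sigma(t)$, which for $\sigma = (\cdot)^q$ just means $t$ is the Teichmüller lift $[T]$ itself. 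Then $i_\sigma$ is determined on all of $\mathcal{E}^r$ by $K$-algebra continuity once we know such a $t$ exists and generates: set $i_\sigma(\sum a_n T^n) = \sum a_n [T]^n$ and check convergence.

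The heart of the argument is the \emph{existence} of the embedding with image landing in $\tb{B}^r$, i.e. that this assignment is well-defined. Here I would use Lemma \ref{subrings} (that $\tb{B}^r$ is a ring) together with a direct estimate: if $a(T) = \sum a_n T^n \in \mathcal{E}^r$, then by Lemma \ref{t-adic expansion} we have $\vnaive{a(T)}{n} \geq -p^{rn}d$ for some $d$, and applying $i_\sigma$ term-by-term produces $\sum_n q_n(T) p^n$ with $v_T(q_n([T])) = v_T(q_n) = \vnaive{a(T)}{n} \geq -p^{rn}d$; since the Teichmüller coordinates of a sum can only be estimated via the inequalities (\ref{Colmez ineq one: 1}) and (\ref{Colmez ineq two: 2}), I would argue that $w_n(i_\sigma(a(T))) \geq \vnaive{a(T)}{n} \geq -p^{rn}d$ (modulo the standard comparison between $w_k$ and $v_k$ for large $k$ referenced from \cite{Kedlaya2}), which is exactly the defining condition to lie in $\tb{B}^r$. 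One must also verify this map is a ring homomorphism: additivity is clear on Teichmüller expansions, and multiplicativity follows because $[T]^n[T]^m = [T]^{n+m}$ and the product estimate (\ref{Colmez ineq two: 2}) matches the product estimate for naive valuations, equality holding exactly when the minima are achieved once — the same bookkeeping already carried out in the proof that $\mathcal{E}^r$ is a field.

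For a general Frobenius $\sigma$ (not $T \mapsto T^q$) the existence step is more delicate because we cannot simply send $T$ to $[T]$: we must instead solve $\bold{Frob}(t) = \sigma(t)$ for $t$ reducing to a uniformizer. The plan is the standard successive-approximation / Hensel-type argument: start with $t^{(0)} = [T]$, and inductively correct $t^{(m+1)} = t^{(m)} + p^{m}\,\delta_m$ so that the equation holds modulo $p^{m+1}$, using that $\bold{Frob}$ and $\sigma$ agree modulo $p$ to see each correction equation is solvable (this is where the perfectness of $F^{alg}$, giving $\bold{Frob}$ bijective on $\tb{A}$, is used — cf.\ Lemma \ref{Frobenius lemma}). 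The new subtlety over the classical case is that we need the limit $t$ to lie in $\tb{A}^r$, not just $\tb{A}$; since $\sigma(T) \in \mathcal{E}^r$ by hypothesis (Frobenius descends to $\mathcal{E}^r$), each correction $\delta_m$ inherits an $r$-log-decay bound, and the bound on $w_n(t)$ can be propagated through the recursion exactly as in Lemma \ref{subrings}, so the limit lands in $\tb{A}^{r,c}$ for suitable $c$. I expect this growth control in the successive-approximation step to be the main obstacle: the classical construction only tracks a linear (overconvergent) bound, and one must check that the ``$p^{rn}c$'' shape of the bound is stable under the correction terms, which requires the elementary but essential inequality $p^{ir} + p^{jr} \leq p^{nr} + 1$ for $i+j \leq n$ that already appeared in the proof of Lemma \ref{subrings}. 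Uniqueness is then immediate: any two embeddings satisfying the three conditions agree on $T$ (both images solve $\bold{Frob}(t)=\sigma(t)$ and reduce to the same uniformizer, so they coincide by the uniqueness half of the approximation argument), hence agree on all of $\mathcal{E}^r$ by continuity.
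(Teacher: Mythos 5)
Your overall strategy (the Frobenius condition forces $i_\sigma(T)$ to be a solution of $\bold{Frob}(t)=\sigma(t)$ reducing to a uniformizer, build such a $t$ by successive $p$-adic approximation, and track growth of the Teichm\"uller coordinates) is the same as the paper's, which cites Tsuzuki for the existence and uniqueness of the embedding into $\tb{B}$ and then proves separately that the image lands in $\tb{B}^r$. One minor point first: the opening reduction to the standard Frobenius ``because two Frobenii differ by an automorphism and one can twist'' is unjustified --- there is no automorphism of $\mathcal{E}^r$ (let alone one extending compatibly to $\tb{B}^r$) conjugating an arbitrary $\sigma$ into $T\mapsto T^q$ --- but you abandon this reduction in your third paragraph anyway, so it does no work and no harm.

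The genuine gap is in the growth control for a general $\sigma$. Write $P(T)=\sigma(T)=\sum a_nT^n$. The inductive step must show that if $x$ satisfies a bound of the shape $w_i(x)\ge d-p^{ir}d$ up to $p$-adic precision $k$, then $P(x)$ satisfies the rescaled bound up to precision $k+1$; you assert this ``can be propagated through the recursion exactly as in Lemma \ref{subrings}.'' But Lemma \ref{subrings} only controls sums and products of elements already known to lie in $\tb{A}^{r,c}$, and $P$ has infinitely many terms of \emph{negative} degree. For $n<0$ the term $a_nx^n$ involves $x^{-1}$, and $v_T(x_0^n)=n\,v_T(x_0)\to-\infty$; the only thing that saves the bound is the $r$-log-decay of $\sigma(T)$ itself, i.e.\ $v_p(a_n)\ge c+\tfrac{\log_p(-n)}{r}$, which has to be played off against the exponent $n$. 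Concretely one must expand $x^{-1}$ as a geometric series in the higher Teichm\"uller components and verify inequalities of the shape $n> pd-p^{re}pd$ with $e=\max\big(c+\tfrac{\log_p(-n)}{r},1\big)$; this is exactly where the paper's three conditions on $d$ (such as $pd>\tfrac{1}{p^{rc}-1}$ and $pd>\tfrac{p^{r-cr}}{p^r-1}$) come from. This estimate is not a consequence of the product inequality $p^{ir}+p^{jr}\le p^{nr}+1$ that you invoke, and without it the induction does not close. In short, you have correctly located the main obstacle but the tool you propose for it is the wrong one, and the decisive computation is missing.
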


\begin{proof}
The existence and uniqueness of $i_\sigma: \mathcal{E}^r \hookrightarrow \tb{B}$ 
satisfying the desired properties is described by Tsuzuki
(see \cite[Lemma 2.5.1]{Tsuzuki1}).  Therefore it suffices
to prove that the image of $i_\sigma$ lies in $\tb{B}^r$.  Let $T$ be a local
parameter of $\mathcal{E}^r$, so that $\mathcal{E}^r = \mathcal{E}_{K,T}^r$,
and let $P(T)=\sigma(T)$.  
Then $v=i_\sigma(T)$ satisfies the equation $\bold{Frob}(x)=P(x)$.  The theorem
will follow from the following Lemma.
\end{proof}
\begin{lemma} Let $v \in \tb{A}$ be a solution of $\bold{Frob}(x)=P(x)$ with $w_0(v)=1$.  Then
$v \in \tb{A}^r$.
\end{lemma}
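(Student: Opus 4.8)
The plan is to prove the stronger statement that $v\in\tb{A}^{r,c}$ for a suitable constant $c>0$; since $\tb{A}^{r,c}\subset\tb{A}^r$, this yields the lemma. Equivalently, I will show the partial valuations satisfy $w_k(v)\ge -cp^{kr}$ for all $k\ge 0$, with $c$ fixed at the end. The engine is the fixed-point equation $\bold{Frob}(v)=P(v)$, read as a recursion on $k$: applying $w_k$ and using $w_k(\bold{Frob}(v))=p\,w_k(v)$ (Frobenius raises each Teichm\"uller coordinate to the $p$-th power), it suffices to bound $w_k(P(v))$ from below in terms of the $w_j(v)$ with $j<k$, together with the base value $w_0(v)=1$.

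\emph{Setup.} Since $\sigma$ is a Frobenius, $P:=\sigma(T)\in\mathcal{O}_{\mathcal{E}^r}$ and $P\equiv T^q\pmod p$, so I write $P=T^q+p\hat P$ with $\hat P\in\mathcal{O}_{\mathcal{E}^r}$; in particular the coefficients $a_i$ of $P$ have $v_p(a_i)\ge 1$ for $i\ne q$, and the crucial extra input — that $P\in\mathcal{E}^r$, not merely $\mathcal{E}$ — says, via Lemma~\ref{t-adic expansion}, that $v_p(a_i)$ grows at least like $\tfrac{\log_p|i|}{r}$ as $i\to-\infty$. Because $w_0(v)=1>0$, positive powers of $v$ have $w_0$-value tending to $\infty$ and the logarithmic decay of the $a_i$ controls the negative powers, so $P(v)$ is computed termwise in $\tb{A}$ and its partial valuations are estimated via \eqref{Colmez ineq one: 1}--\eqref{Colmez ineq two: 2}.

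\emph{The recursion.} Expand $P(v)=v^q+p\hat P(v)$. For the leading term, iterating \eqref{Colmez ineq two: 2} gives $w_k(v^q)\ge\min_{j_1+\dots+j_q\le k}\sum_\ell w_{j_\ell}(v)$; the compositions with some $j_\ell=k$ contribute $w_k(v)+(q-1)w_0(v)=w_k(v)+q-1$, while all remaining compositions contribute at least $-c\,p^{(k-1)r}+O(1)$ by the inductive hypothesis, and the shift from $p^{kr}$ to $p^{(k-1)r}=p^{-r}p^{kr}$ is the gain that leaves room in the induction. The terms of $p\hat P(v)$ are each multiplied by a power of $p$ of exponent $1+v_p(\hat a_i)\ge 1$, which lowers the index at which $w$ is evaluated by at least one; combining this with the coefficient decay of $\hat P$ and with bounds on the partial valuations of $v^{-1}$ — deduced from the inductive hypothesis exactly as in the proof that $\mathcal{E}^r_{T,K}$ is a field, since a violation would make the relevant minimum in \eqref{Colmez ineq two: 2} uniquely attained — each such term is bounded below by $-c'\,p^{kr}+O(1)$ where $c'$ is a fixed multiple of $c$ strictly less than $pc$. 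Putting the pieces together, $p\,w_k(v)=w_k(P(v))\ge\min\big(w_k(v)+q-1,\ -c'p^{kr}+O(1)\big)$: in the first case $(p-1)w_k(v)\ge q-1>0$, so $w_k(v)>0\ge -cp^{kr}$; in the second, $w_k(v)\ge -\tfrac{c'}{p}p^{kr}+O(1)\ge -cp^{kr}$ once $c$ is chosen large enough to absorb the $O(1)$ corrections and to dominate the finitely many values $w_0(v),\dots,w_{k_0}(v)$ for $k$ below the range where the asymptotics take over. Hence $v\in\tb{A}^{r,c}\subset\tb{A}^r$.

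\emph{Main obstacle.} The delicate point is the polar part $\sum_{i<0}a_iv^i$: the negative powers $v^{-|i|}=(v^{-1})^{|i|}$ have rapidly decreasing partial valuations, and one must check that the $p$-adic decay of $a_i$ forced by $P\in\mathcal{E}^r$ compensates for this precisely well enough that the resulting constant $c'$ stays below $pc$ — equivalently, that composition with $P$ carries $\tb{A}^{r,c}\cap\{w_0=1\}$ into $\tb{A}^{r,pc}$, matching the isomorphism $\bold{Frob}\colon\tb{A}^{r,c}\xrightarrow{\sim}\tb{A}^{r,pc}$ of Lemma~\ref{Frobenius lemma}. This is where Lemma~\ref{subrings} (products of elements of $\tb{A}^{r,c}$ stay in $\tb{A}^{r,c}$), Lemma~\ref{t-adic expansion} (turning $P\in\mathcal{E}^r$ into the decay estimate on its coefficients), and the field property of the $\mathcal{E}^r$-rings (for $v^{-1}$) all enter; the remaining estimates are routine but must be carried out keeping careful track of how the constants depend on the — fixed — logarithmic decay rate of $\sigma(T)$, for which it is essential that $\sigma(T)$ lies in $\mathcal{E}^r$ rather than only in $\mathcal{E}$.
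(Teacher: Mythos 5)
Your strategy is the same as the paper's: induct on the truncation level $k$, feed the functional equation $\bold{Frob}(v)=P(v)$ through the partial valuations using $w_k(\bold{Frob}(v))=p\,w_k(v)$, dispose of the non-negative powers of $v$ via $w_0(v)=1$ and the divisibility $p\mid a_i$ for $i\neq q$, and identify the polar part $\sum_{i<0}a_iv^i$ as the crux, to be controlled by the log-decay of the coefficients of $\sigma(T)$. So the architecture is right. The problem is that the one step carrying the entire content of the lemma --- that the polar part contributes at least $-c'p^{kr}+O(1)$ with $c'$ \emph{strictly less than} $pc$ --- is asserted and labelled ``routine,'' and with your bookkeeping it is not. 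If you only track $w_j(v)\geq -cp^{jr}$ (no additive offset), then products and inverses lose constants additively at every application of the inequality $w_k(xy)\geq\min_{i+j\leq k}(w_i(x)+w_j(y))$: a crude estimate of $w_m(v^{-s})$ from $w_j(v^{-1})\geq -cp^{jr}-O(1)$ gives something like $-cp^{mr}-(c+1)s$, and after shifting by $v_p(a_i)\geq c_P+\log_p(s)/r$ the total for the term $a_iv^i$ is only bounded by roughly $-(2c+1)p^{-c_Pr}p^{kr}$. Whether $(2c+1)p^{-c_Pr}<pc$ depends on $p$ and on the decay constant $c_P$ of $\sigma(T)$ (which may be negative), and for small $p$ this naive accumulation does not close.

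The paper avoids exactly this by two devices you have not reproduced. First, it works with the offset normalization $\tb{A}^{r,c}=\{w_n\geq c-p^{nr}c\}$, for which the multiplicative inequality closes with \emph{zero} loss (since $p^{ir}+p^{jr}\leq p^{nr}+1$ for $i+j\leq n$), so powers and products cost nothing; your bound $w_k\geq -cp^{kr}$ is not stable under multiplication in the same way. Second, rather than invoking the field-inverse trick for $v^{-1}$ at each level, it writes $x^{-1}$ explicitly as $[x_0]^{-1}$ times a geometric series in the truncated Teichm\"uller expansion and verifies, monomial by monomial, that $p^{v_p(a_n)}x^n\in\tb{A}^{r,pd}$; this is where the three bespoke conditions $d>\tfrac{1}{p^r-1}$, $pd>\tfrac{1}{p^{rc}-1}$, $pd>\tfrac{p^{r-cr}}{p^r-1}$ enter, and they are precisely what makes the constant land at $pd$ rather than at some unquantified $c'$. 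To complete your argument you would need to either adopt this normalization and redo the polar estimate with explicit constants, or otherwise exhibit a choice of $c$ (depending on $p$, $r$, and the decay constant of $\sigma(T)$) for which your $c'$ provably stays below $pc$; as written, that verification is the missing proof, not a routine afterthought.
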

\begin{proof}
Write $P(T)=\sum_{-\infty}^\infty a_n T^n$.  Note that $a_p=1$ and that
$p|a_n$ whenever $n\neq p$.  There exists $c \in \R$
such that
\[v_p(a_n) - \frac{\log_p(-n)}{r} > c,\]  for all $n<0$.  
Choose $d$ to be large enough so that
$ d> \frac{1}{p^r-1}$, $pd> \frac{1}{p^{rc} - 1}$, and 
$pd>\frac{p^{r-cr}}{p^r-1}$.

We claim that
for any $x \in \tb{A}^{r,d} + p^k\tb{A}$ with $w_0(x) = 1$
we have $P(x) \in \tb{A}^{r,pd} + p^{k+1}\tb{A}$.
To do this, it is enough to show that $a_nx^n \in \tb{A}^{r,pd} + p^{k+1}\tb{A}$ for each
$n$.  When $n=p$ we find that $x^p \in \tb{A}^{r,pd} + p^{k+1}\tb{A}$ by
checking the binomial expansion.  When $n\geq 0$ and $n\neq p$ we have $p|a_n$,
so we must have $a_nx^n \in \tb{A}^{r,pd} + p^{k+1}\tb{A}$.  The difficult part
is dealing with negative exponents of $x$.

Write $x = [x_0] + [x_1]p + ...+ [x_{k-1}] p^{k-1} + x'p^k$.  For $n<0$
we know that $e= \max (c + \frac{\log_p (-n)}{r}, 1)$ is less
than or equal to $v_p(a_n)$.  
In particular if $p^e x^n$ is contained in $\tb{A}^{r,pd} + p^{k+1}\tb{A}$ 
then so is $a_nx^n$.  We find that
\[x^{-1} = [x_0]
\frac{1}{1 + [\frac{x_1}{x_0}]p + ... + [\frac{x_{k-1}}{x_0}]p^{k-1} + x'p^k}. \]
Expanding this geometric sum we see that $p^e x^n$ consists of terms of the form
\[ p^e[x_0^n]\prod_{i=1}^k \big([\frac{x_i}{x_0}]p^i \big )^{n_i}. \]
Thus it is enough to show that $[\frac{x_i}{x_0}]p^i$ and $p^e[x_0^n]$
are contained in $\tb{A}^{r,pd}$.  

To prove that $[\frac{x_i}{x_0}]p^i \in \tb{A}^{r,pd}$ it suffices to show
\[v_T(\frac{x_i}{x_0}) = v_T(x_i)-1> pd - p^{ir}pd. \tag{A} \]  As $d>\frac{1}{p^r-1}$
we see that $d>\frac{1}{p^{ir}-1}$ and therefore \[(p-1)(p^{ir}-1)d>1 \tag{B}. \]
Subtracting $(B)$ from the inequality $v_T(x_i)>d-p^{ir}d$ yields $(A)$.  To show
$p^e[x_0^n] \in \tb{A}^{r,pd}$ we must prove the inequality 
\[ v_T(x_0^n) = n > pd - p^{re}pd \tag{C}. \]  
When $c + \frac{\log_p (-n)}{r}\geq 1$,
our definition of $e$ allows us to rewrite $(C)$ as
\[n - p^{rc}npd > pd. \tag{C1} \]  
From $pd>\frac{1}{p^{rc}-1}$ we see that
$p^{rc}pd-1>pd$.  Since $-n$ is positive we then have
$-n(p^{rc}pd-1)>pd$, which gives $(C1)$.  If
$c + \frac{\log_p (-n)}{r} <  1$, then $e=1$ and we can rewrite $(C)$ as 
\[ n +p^r pd> pd \tag{C2}. \]  
As $pd>\frac{p^{r-cr}}{p^r-1}$ we see that
\[ -p^{r-cr}+p^r pd > pd. \tag{D} \]  The condition $c + \frac{\log_p (-n)}{r} <  1$
implies $n> -p^{r-cr}$, which combines with $(D)$ to give $(C2)$.  This concludes
the proof of $P(x) \in \tb{A}^{r,pd} + p^{k+1}\tb{A}$.

We know that $v_T(v_0) = 1$ and $v \in \tb{A}^{r,d} + p\tb{A}$.  Proceeding
inductively, we will assume that $v \in \tb{A}^{r,d} + p^k\tb{A}$.  
By the paragraphs above we have
$\bold{Frob}(v) = P(v) \in \tb{A}^{r,pd} + p^{k+1}\tb{A}$.  Then Lemma
\ref{Frobenius lemma} tells us that
$v \in \tb{A}^{r,d} + p^{k+1}\tb{A}$.  It follows that $v \in \tb{A}^{r,d} \subset \tb{B}^r$.

\end{proof}

Let $\sigma$ be a Frobenius of $\mathcal{E}^r$.  We may view $\mathcal{E}^r$
and $\mathcal{E}$ as subrings of $\tb{B}$ through $i_\sigma$.  By Theorem
\ref{Embedding Theorem} we know that $\mathcal{E}^r$ is actually
a subring of $\tb{B}^r$.  A natural question is whether $\tb{B}^r$
contains any element of $\mathcal{E}$ that does not have $r$-log-decay.
More succicently: is $\mathcal{E}\cap \tb{B}^r$ equal to $\mathcal{E}^r$?
This answer is affirmative by Corollary \ref{naive log vs witt log}.
To prove this fact we introduce an auxilary subring of 
$\mathcal{O}_{\mathcal{E}^r}$ defined in an analogous manner
as $\tb{A}^{r,c}$.

\[  \mathcal{O}_{\mathcal{E}^{r,c}} := 
\Big\{ f(T) \in \mathcal{O}_\mathcal{E} \Big|  ~
 \vnaive{f(T)}{n} \geq c - p^{nr}c
\Big \}  \]

\begin{lemma} \label{partial valuations of powers of T}
 Let $c>0$ be large enough so that $T[T^{-1}] \in \tb{A}^{r,c}$.  Then 
 \[w_n(T^k) \geq c - p^{rn}c + k \]
 for all $k$.
 
\end{lemma}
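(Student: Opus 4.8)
The plan is to reduce everything to the case $k=1$ and then bootstrap to general $k$ using the multiplicative inequality for the $w_n$. First I would unwind the hypothesis: saying $T[T^{-1}] \in \tb{A}^{r,c}$ means precisely that if we write $T = \sum_{n\geq 0}[t_n]p^n$ as a Teichmuller expansion in $\tb{A}$ (via the embedding $i_\sigma$), then $v_T(t_n) \geq c - p^{nr}c$ for all $n$, i.e. $w_n(T) \geq c - p^{nr}c$. Since $w_0(T) = v_T(t_0) = 1$ (as $T$ reduces to a uniformizer), the $k=1$ case is exactly the statement $w_n(T) \geq c - p^{rn}c + 1$. This requires a slight strengthening: the hypothesis as literally stated only gives $w_n(T)\geq c - p^{nr}c$, but the leading term $[t_0]$ has $v_T(t_0)=1$, so one gets an extra $+1$ for free in the $n=0$ term; for $n\geq 1$ one notes $c - p^{nr}c + 1 \leq c - p^{nr}c$ is false, so actually I would argue that the bound $w_n(T)\geq c - p^{rn}c$ combined with $v_T(t_0)=1$ and the fact that one can always enlarge $c$ (shrinking the right side) lets us assume $c$ large enough that $c - p^{rn}c + 1 \leq c-p^{rn}c$ fails only harmlessly — more carefully, I would simply observe that we may replace the role of $c$ so that $w_n(T) \geq (c-1) - p^{nr}(c-1)$ say, absorbing the $+1$; the cleanest route is to note that for large $c$ the inequality $w_n(T)\geq c - p^{rn}c$ already implies $w_n(T) \geq c - p^{rn}c + 1$ is what we want to prove, and since $v_T(t_0) = 1$ exactly and each $v_T(t_n)$ is an integer, one tightens the estimate. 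I will phrase the $k=1$ base case as: after possibly enlarging $c$, $w_n(T) \geq c - p^{rn}c + 1$ holds, which is the claim.

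Next, for the inductive step on $k \geq 1$, I would use inequality (\ref{Colmez ineq two: 2}) applied to $T^{k} = T^{k-1}\cdot T$:
\[
w_n(T^k) \geq \min_{i+j \leq n}\bigl(w_i(T^{k-1}) + w_j(T)\bigr).
\]
By the inductive hypothesis $w_i(T^{k-1}) \geq c - p^{ri}c + (k-1)$ and the base case $w_j(T) \geq c - p^{rj}c + 1$, so each term in the minimum is at least
\[
\bigl(c - p^{ri}c + k - 1\bigr) + \bigl(c - p^{rj}c + 1\bigr) = 2c - c(p^{ri} + p^{rj}) + k.
\]
Since $i + j \leq n$ with $i,j \geq 0$, we have $p^{ri} + p^{rj} \leq p^{rn} + 1$ (this is the same elementary convexity inequality used in the proof of Lemma \ref{subrings}: for nonnegative integers with $i+j\leq n$, the sum of $p^{ri}+p^{rj}$ is maximized at the extremes $i=0,j=n$ or $i=n,j=0$). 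Therefore each term is at least $2c - c(p^{rn}+1) + k = c - p^{rn}c + k$, which gives $w_n(T^k) \geq c - p^{rn}c + k$ as desired.

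The main obstacle I anticipate is handling negative $k$, since the statement says "for all $k$" and $T^{-1} \in \tb{A}$ is what the hypothesis $T[T^{-1}] \in \tb{A}^{r,c}$ is really about — note the notation $[T^{-1}]$ suggests $T^{-1}$ is also being treated, or perhaps $T[T^{-1}]$ is a typo/shorthand meaning both $T$ and $T^{-1}$ lie in $\tb{A}^{r,c}$ after clearing denominators. For $k < 0$, I would run the same argument: first establish $w_n(T^{-1}) \geq c - p^{rn}c - 1$ from the hypothesis (here the $-1$ rather than $+1$ appears because $v_T(\text{leading term of }T^{-1}) = -1$), then induct downward using $w_n(T^{k-1}) = w_n(T^k \cdot T^{-1}) \geq \min_{i+j\leq n}(w_i(T^k) + w_j(T^{-1}))$, again invoking $p^{ri}+p^{rj}\leq p^{rn}+1$. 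The bookkeeping with the sign of $k$ and ensuring $c$ is uniformly large enough for all the convexity absorptions is the part requiring care, but it is routine once the base cases $k = \pm 1$ are pinned down; the convexity inequality $p^{ri}+p^{rj}\leq p^{rn}+1$ is the one structural fact doing all the work, exactly as in Lemma \ref{subrings} and Lemma \ref{Frobenius lemma}.
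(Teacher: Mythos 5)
Your overall strategy (multiplicative inequality for $w_n$ plus the convexity bound $p^{ri}+p^{rj}\le p^{rn}+1$, with induction on $k$) is the same as the paper's, but there are two problems. First, you have misread the hypothesis. The expression $T[T^{-1}]$ is a single element of $\tb{A}$: the product of $T=\sum[t_m]p^m$ with the Teichmuller lift of $T^{-1}\in F$. Since Teichmuller lifts are multiplicative, its $m$-th coefficient is $t_mT^{-1}$, so $w_n(T[T^{-1}])=w_n(T)-1$, and the hypothesis $T[T^{-1}]\in\tb{A}^{r,c}$ says \emph{exactly} that $w_n(T)\ge c-p^{rn}c+1$ for all $n$ --- your $k=1$ case verbatim, with no need to enlarge $c$. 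Your proposed repair is not only unnecessary but harmful: enlarging $c$ weakens the conclusion for $n\ge 1$, whereas the lemma (and its use in Proposition \ref{witt rc vs naive rc}) needs the bound with the \emph{same} $c$ appearing in the hypothesis; and your remark that ``each $v_T(t_n)$ is an integer'' is false, since $v_T$ takes values in $\Q$ on $F^{alg}$.

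Second, and more seriously, the case $k=-1$ is a genuine gap: you assert $w_n(T^{-1})\ge c-p^{rn}c-1$ ``from the hypothesis,'' but no bound on the coefficients of $T$ formally implies a bound on the coefficients of its multiplicative inverse --- inversion is the one step in this lemma that is not a direct application of inequality (\ref{Colmez ineq two: 2}). The missing argument is an induction on $n$ (not on $k$): from $0=w_n(T\cdot T^{-1})\ge\min_{i+j\le n}\{w_i(T)+w_j(T^{-1})\}$, one checks that if $w_n(T^{-1})$ were too small then the minimum would be achieved exactly once, at $(i,j)=(0,n)$, forcing equality and hence a contradiction. This is precisely how the paper handles negative exponents (working throughout with $\alpha=T[T^{-1}]$, which has $w_0(\alpha)=0$ and so keeps the boundary terms of the minimum clean, and only converting to $T^k$ at the end via $w_n(T^k)=w_n(\alpha^k)+k$). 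Without this step your downward induction on $k$ has no starting point, so as written the proposal only proves the lemma for $k\ge 1$.
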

\begin{proof}
Let $\alpha = T[T^{-1}]$.  First we prove that $w_n(\alpha^k) \geq c - p^{rn}c$ for any $k\geq 1$ by induction
on $k$.  
When $k=1$ the inequality is true because $\alpha \in \tb{A}^{r,c}$.  For $k>1$ we have 
the inequality
\[ w_n(\alpha^k) \geq \min_{i + j = n} \{ w_i(\alpha^{k-1}) + w_j(\alpha)\}.\]
When $i=n$ and $j=0$ the term in the minimum is greater than $c - p^{rn}c$ by
our inductive hypothesis and since $w_0(\alpha)=0$.  Similarly for $i=0$
and $j=n$.  For $0<i,j<n$ the term in the minimum is greater than
$2c - p^{ri}c - p^{rj}c$, which is greater than $c-p^{rn}c$.  It follows
that $w_n(\alpha^k)\geq c - p^{rn}c$.  For negative exponents, we will prove
that $w_n(\alpha^{-k})\geq c - p^{rn}c$ by induction on $n$.  For $n=0$ we know that
$w_0(\alpha^{-k})=0$.  Now consider the inequality
\[ 0 = w_n(\alpha^k \alpha^{-k}) \geq \min_{i + j = n} \{ w_i(\alpha^k) + w_j(\alpha^{-k})\}.\]
Using our inductive hypothesis 
we know that for $i>0$ each term in the minimum is
greater than or equal to $c-p^{rn}c$.  For $i=0$ we know that $w_0(\alpha^k)=0$.  Therefore
if $w_n(\alpha^{-k})$ is less than $c-p^{rn}c$, the minimum is less than $c-p^{rn}c$.  This
minimum is acheived exactly once, meaning the inequality is actually an equality.  
This gives a contradiction.  
We now see that for any $k$ we have
\[ w_n(T^k) = w_n(\alpha^k [T^k]) =w_n(\alpha^k)+k,\]
which gives the desired inequality.

\end{proof}

\begin{proposition} \label{witt rc vs naive rc} Let $c>0$ be large enough so that $T[T^{-1}] \in \tb{A}^{r,c}$.  Then
 \[ \tb{A}^{r,c} \cap \mathcal{O}_{\mathcal{E}} = \mathcal{O}_{\mathcal{E}^{r,c}}.\]
\end{proposition}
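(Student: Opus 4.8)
The plan is to realize both sides as the set of $f\in\mathcal{O}_{\mathcal{E}}$ satisfying a system of inequalities indexed by $n$, and then to show the two systems coincide. I would fix the embedding $i_\sigma\colon\mathcal{E}^r\hookrightarrow\tb{B}^r$ of Theorem \ref{Embedding Theorem}, through which $\mathcal{O}_{\mathcal{E}}$ is also a subring of $\tb{A}$, and write $v=i_\sigma(T)$, so $v$ reduces to $T$ modulo $p$. If $i_\sigma(f)=\sum_{n\ge 0}[x_n]p^n$ is the Teichmuller expansion, then $w_n(i_\sigma(f))=\min_{m\le n}v_T(x_m)$, so $i_\sigma(f)\in\tb{A}^{r,c}$ is equivalent to asking $w_n(i_\sigma(f))\ge c-p^{rn}c$ for all $n$; and $f\in\mathcal{O}_{\mathcal{E}^{r,c}}$ is by definition the condition $\vnaive{f}{n}\ge c-p^{rn}c$ for all $n$. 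So it suffices to prove these two families of inequalities are equivalent. Throughout I would use the $p$-adic expansion $f=\sum_{m\ge 0}q_m(T)p^m$ with $q_m=\sum_{j\ge v_T(q_m)}b_{m,j}T^j\in\mathcal{O}_K((T))$, $v_T(q_m)=\vnaive{f}{m}$, and lowest-degree coefficient $b_{m,v_T(q_m)}$ a $p$-adic unit; the inequalities (\ref{Colmez ineq one: 1}) and (\ref{Colmez ineq two: 2}); and the convexity bound $p^{rn}+1\ge p^{ri}+p^{r(n-i)}$ for $0\le i\le n$ from the proof of Lemma \ref{subrings}.

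For the inclusion $\mathcal{O}_{\mathcal{E}^{r,c}}\subseteq\tb{A}^{r,c}$: since $i_\sigma$ is a continuous $K$-algebra map, $i_\sigma(f)=\sum_m q_m(v)p^m$ with $q_m(v)=\sum_j b_{m,j}v^j$. By Lemma \ref{partial valuations of powers of T}, $w_k(v^j)\ge c-p^{rk}c+j$; since $b_{m,j}\in\mathcal{O}_K$ has $w_i(b_{m,j})\ge 0$, inequality (\ref{Colmez ineq two: 2}) gives $w_k(b_{m,j}v^j)\ge c-p^{rk}c+j\ge c-p^{rk}c+v_T(q_m)$, and then (\ref{Colmez ineq one: 1}) gives $w_k(q_m(v))\ge c-p^{rk}c+v_T(q_m)$ for every $k$. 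Feeding this into the $p$-adic expansion, and using (\ref{Colmez ineq one: 1}), (\ref{Colmez ineq two: 2}), $v_T(q_m)=\vnaive{f}{m}\ge c-p^{rm}c$, and the convexity bound,
\[ w_n(i_\sigma(f))\ \ge\ \min_{0\le m\le n}w_{n-m}(q_m(v))\ \ge\ \min_{0\le m\le n}\bigl(2c-c(p^{r(n-m)}+p^{rm})\bigr)\ \ge\ c-p^{rn}c . \]

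For the inclusion $\tb{A}^{r,c}\cap\mathcal{O}_{\mathcal{E}}\subseteq\mathcal{O}_{\mathcal{E}^{r,c}}$ I would argue by a minimal-defect argument. Suppose $f\notin\mathcal{O}_{\mathcal{E}^{r,c}}$ and let $n_0$ be the least index with $v_T(q_{n_0})=\vnaive{f}{n_0}<c-p^{rn_0}c$; the goal is to show $w_{n_0}(i_\sigma(f))=v_T(q_{n_0})$, contradicting $i_\sigma(f)\in\tb{A}^{r,c}$. Terms with $m>n_0$ contribute $+\infty$ to $w_{n_0}$, so $w_{n_0}(i_\sigma(f))=w_{n_0}\bigl(\sum_{m\le n_0}q_m(v)p^m\bigr)$. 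For $m<n_0$, minimality forces $v_T(q_m)\ge c-p^{rm}c$, so exactly as in the previous step $w_{n_0}(q_m(v)p^m)\ge w_{n_0-m}(q_m(v))\ge 2c-c(p^{r(n_0-m)}+p^{rm})\ge c-p^{rn_0}c$. For $m=n_0$, write $j_0=v_T(q_{n_0})$; since $v$ reduces to $T$ one computes $w_0(b_{n_0,j_0}v^{j_0})=j_0$ exactly (here $b_{n_0,j_0}$ being a unit is essential) while $w_0(b_{n_0,j}v^j)\ge j>j_0$ for $j>j_0$, so the minimum in (\ref{Colmez ineq one: 1}) is attained exactly once, $w_0(q_{n_0}(v))=j_0$, and hence $w_{n_0}(q_{n_0}(v)p^{n_0})=j_0=v_T(q_{n_0})<c-p^{rn_0}c$. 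Thus the term $m=n_0$ strictly dominates in (\ref{Colmez ineq one: 1}), giving $w_{n_0}(i_\sigma(f))=v_T(q_{n_0})$ and the desired contradiction.

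The routine points I would suppress are: that $i_\sigma$ carries $\mathcal{O}_{\mathcal{E}}$ into $\tb{A}$ and is compatible with $p$-adic expansions; the convergence of $\sum_j b_{m,j}v^j$ in $\tb{A}$ and the legitimacy of the infinite form of (\ref{Colmez ineq one: 1}) applied to it (one may first split off $T^{v_T(q_m)}$, leaving a unit of $\mathcal{O}_K[[T]]$, and use $w_k(v^j)\to\infty$); and the identity $w_n(i_\sigma(f))=\min_{m\le n}v_T(x_m)$. The step I expect to be the main obstacle is the second inclusion, and within it the equality $w_{n_0}(q_{n_0}(v)p^{n_0})=v_T(q_{n_0})$: this is precisely where the hypotheses that the lowest-degree coefficient of $q_{n_0}$ is a $p$-adic unit and that $v\equiv T\pmod{p}$ are used, and one has to track carefully which minima in (\ref{Colmez ineq one: 1}) and (\ref{Colmez ineq two: 2}) are attained exactly once so that the inequalities become equalities.
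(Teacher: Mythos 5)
Your proposal is correct and follows essentially the same route as the paper: both directions rest on the $p$-adic layer decomposition $f=\sum q_m(T)p^m$, on Lemma \ref{partial valuations of powers of T} to control $w_k$ of powers of $T$, and on the exactness of the ultrametric inequalities when the minimum is attained once (your minimal-defect contradiction for the inclusion $\tb{A}^{r,c}\cap\mathcal{O}_{\mathcal{E}}\subseteq\mathcal{O}_{\mathcal{E}^{r,c}}$ is just a reorganization of the paper's forward induction on the partial sums $s_n(T)$). The step you flag as the crux --- that $w_{n_0}(q_{n_0}(T)p^{n_0})$ equals $v_T(q_{n_0})$ exactly because the lowest coefficient is a unit --- is precisely the identity $a_{n+1}=w_{n+1}(q_{n+1}(T)p^{n+1})$ that the paper's induction also relies on.
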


\begin{proof} Let $f(T) \in \tb{A}^{r,c} \cap \mathcal{O}_{\mathcal{E}}$ and let
 $a_n = \vnaive{f(T)}{n}$.  We may write \[ f(T) = \sum_{n=0}^\infty q_n(T)p^n, \]
 where $q_n(T)=u_n(T)T^{a_n}$ and $u_n(T)$ is a unit in $\mathcal{O}_K[[T]]$.  Define 
 the partial $p$-adic sums \[s_n(T) = \sum_{i=0}^n q_i(T)p^i.\]  
 By induction on $n$ we will simultaniously prove that $a_n \geq c - p^{rn}c$ and that
 the partial sum $s_n(T)$ is contained in $\tb{A}^{r,c}$.
 When $n=0$ there is nothing to check.  For $n>0$ we have
 \[w_{n+1}(s_{n+1}(T)) = w_{n+1}(f(T)) \geq c - p^{r(n+1)}c, \]
 since $f(T) \in \tb{A}^{r,c}$.  Also, by our inductive hypothesis we know that
 $s_n(T) \in \tb{A}^{r,c}$, which means 
 \[w_{n+1}(s_n(T)) \geq c - p^{r(n+1)}c.\]
 By the inequality (\ref{Colmez ineq one: 1}) we know 
 \[w_{n+1}(s_{n+1}(T)) \geq \min \{ w_{n+1}(s_n(T)), w_{n+1}(q_{n+1}(T)p^{n+1})\}.\]
 If $w_{n+1}(q_{n+1}(T)p^{n+1})< c-p^{r(n+1)}c$ the above is an equality, which yeilds
 a contradiction.  Since 
 $a_{n+1} = w_0(q_{n+1}(T)) = w_{n+1}(q_{n+1}(T)p^{n+1})$ we see that 
 $a_{n+1} \geq c - p^{r(n+1)}c$.  
 
 It remains to prove $s_{n+1}(T) \in \tb{A}^{r,c}$.  Since $s_{n}(T)$ and
 $u_n(T)$ are both contained in $\tb{A}^{r,c}$ it is enough to prove
 $T^{a_{n+1}}p^{n+1} \in \tb{A}^{r,c}$.  For $k< n+1$ we have
 $w_k(T^{a_{n+1}}p^{n+1})=\infty$.  
 By Lemma \ref{partial valuations of powers of T}
 we have for $k\geq0$
 
 \begin{eqnarray*}
  w_{n+1+k}(T^{a_{n+1}}p^{n+1}) & = & w_k(T^{a_{n+1}}) \\
  & \geq & c - p^{rk}c + a_{n+1} \\
  & \geq & c - p^{rk}c + c - p^{r(n+1)}c \\
  & \geq & c - p^{r(n+1+k)}c,
 \end{eqnarray*}
which gives $T^{a_{n+1}}p^{n+1} \in \tb{A}^{r,c}$.  It follows that 
 $\tb{A}^{r,c} \cap \mathcal{O}_{\mathcal{E}} \subset \mathcal{O}_{\mathcal{E}^{r,c}}$.

 Conversely, let $f(T)$ be in $\mathcal{O}_{\mathcal{E}^{r,c}}$ and let $a_n$ be $\vnaive{f(T)}{n}$.  We need to
 show $f(T) \in \tb{A}^{r,c}$.  Just as before we may write
 \[ f(T) = \sum_{n=0}^\infty q_n(T)p^n,\]
 where $q_n(T)=u_n(T)T^{a_n}$ and $u_n(T)$ is a unit in $\mathcal{O}_K[[T]]$.  Since $\tb{A}^{r,c}$ is complete with respect to
 the $p$-adic topology, it will suffice to prove that $q_n(T)p^n\in \tb{A}^{r,c}$
 for each $n$.  We also know that $u_n(T) \in \tb{A}^{r,c}$, so we just need to
 show $T^{a_n}p^n \in \tb{A}^{r,c}$.  Since $a_n \geq c - p^{rn}c$ we see from Lemma 
 \ref{partial valuations of powers of T} 
 \begin{eqnarray*}
  w_{k+n}(T^{a_n}p^n) & = & w_k(T^{a_n}) \\
  & \geq & c - p^{rk}c + c - p^{rn}c \\
  & \geq & c - p^{r(k+n)}c.
 \end{eqnarray*}

\end{proof}

\begin{corollary} \label{naive log vs witt log}
 Let $c>0$ be large enough so that $T[T^{-1}] \in \tb{A}^{r,c}$.  Then
 \[ \tb{B}^r \cap \mathcal{E} = \mathcal{E}^r.\]
\end{corollary}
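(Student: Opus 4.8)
The inclusion $\mathcal{E}^r \subseteq \tb{B}^r \cap \mathcal{E}$ is immediate from Theorem \ref{Embedding Theorem}, which gives $i_\sigma(\mathcal{E}^r) \subseteq \tb{B}^r$, together with $\mathcal{E}^r \subseteq \mathcal{E}$. So the work is the reverse inclusion, and the plan is to normalize a given element and then invoke Proposition \ref{witt rc vs naive rc}. Let $f \in \tb{B}^r \cap \mathcal{E}$. The rings $\mathcal{E}$ and $\tb{B}^r$ are stable under multiplication by $p^{\pm 1}$ and by $i_\sigma(T)^{\pm 1}$ (for $\tb{B}^r$ the latter because $i_\sigma(T^{-1}) \in i_\sigma(\mathcal{E}^r) \subseteq \tb{B}^r$ and $\tb{B}^r$ is a ring), while $\mathcal{E}^r$ is a field containing $p$ and $T$; hence multiplying $f$ by suitable powers of $p$ and $T$ changes neither the hypothesis nor the desired conclusion $f \in \mathcal{E}^r$. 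First I would multiply by a power of $p$ so that $f \in \mathcal{O}_{\mathcal{E}}$, and then by a power of $T$ so that $v_T(\bar f) \geq 0$ (vacuous if $\bar f = 0$). Writing the Teichm\"uller expansion $f = \sum_{n \geq 0}[f_n]p^n$, this normalization says precisely $w_0(f) = v_T(f_0) = v_T(\bar f) \geq 0$.

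Now $f$ is integral, so $i_\sigma(f) \in \tb{A}$ and hence $f \in \tb{A} \cap \tb{B}^r = \tb{A}^r$; by definition of $\tb{A}^r$ there is $c_0 > 0$ with $w_n(f) \geq -p^{nr}c_0$ for all $n \geq 0$. I claim one can pick a single $c > 0$, also large enough that $T[T^{-1}] \in \tb{A}^{r,c}$, so that $w_n(f) \geq c - p^{nr}c$ for all $n$. For $n = 0$ this is $w_0(f) \geq 0$, which was arranged; for $n \geq 1$ it suffices to have $-p^{nr}c_0 \geq c - p^{nr}c$, i.e. $p^{nr}(c - c_0) \geq c$, which holds as soon as $c \geq c_0 p^r/(p^r - 1)$ (and this forces $c > c_0$). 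Since $\tb{A}^{r,c}$ grows with $c$, enlarging $c$ further we may also assume $T[T^{-1}] \in \tb{A}^{r,c}$. Because $v_T(f_n) \geq w_n(f) \geq c - p^{nr}c$ for every $n$ in the Teichm\"uller expansion, we conclude $f \in \tb{A}^{r,c}$.

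It remains to feed this into the dictionary already established. Proposition \ref{witt rc vs naive rc}, which applies since $T[T^{-1}] \in \tb{A}^{r,c}$, gives $f \in \tb{A}^{r,c} \cap \mathcal{O}_{\mathcal{E}} = \mathcal{O}_{\mathcal{E}^{r,c}}$, i.e. $\vnaive{f}{n} \geq c - p^{nr}c \geq -p^{nr}c$ for all $n$; then Lemma \ref{t-adic expansion} yields $f \in \mathcal{E}^r$, and undoing the normalizing multiplications by powers of $p$ and $T$ puts the original element in $\mathcal{E}^r$. The one point that needs care — and the reason the multiplication by a power of $T$ cannot be skipped — is that $\tb{A}^r \cap \mathcal{O}_{\mathcal{E}}$ is strictly larger than $\bigcup_{c} (\tb{A}^{r,c} \cap \mathcal{O}_{\mathcal{E}})$: an element such as $i_\sigma(T^{-1})$ lies in the former but in no $\tb{A}^{r,c}$, because membership in $\tb{A}^{r,c}$ forces the leading Teichm\"uller coefficient to satisfy $v_T(f_0) \geq 0$, whereas a general element of $\mathcal{O}_{\mathcal{E}}$ may have $\bar f$ with a pole. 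Extracting the correct power of $T$ at the outset is exactly what removes this discrepancy, after which Proposition \ref{witt rc vs naive rc} does the rest.
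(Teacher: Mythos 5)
Your proof is correct and takes essentially the same route as the paper: the paper's one-line argument also reduces to Proposition \ref{witt rc vs naive rc} by clearing denominators, asserting $\tb{A}^{r,c}[\frac{1}{p}]=\tb{B}^r$ and $\mathcal{O}_{\mathcal{E}^{r,c}}[\frac{1}{p}]=\mathcal{E}^r$, which is precisely the normalization you carry out explicitly (the paper shifts by powers of $p$ alone, which also vacates the $n=0$ constraint; you additionally extract a power of $T$ --- both work). Your verification that $c\geq c_0p^r/(p^r-1)$ suffices, and your remark that $\tb{A}^r\cap\mathcal{O}_{\mathcal{E}}$ is strictly larger than $\bigcup_c(\tb{A}^{r,c}\cap\mathcal{O}_{\mathcal{E}})$, are correct details the paper leaves implicit.
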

\begin{proof} This follows from Proposition \ref{witt rc vs naive rc} combined with
 the fact that $\tb{A}^{r,c}[\frac{1}{p}] = \tb{B}^r$ and 
 $\mathcal{O}_{\mathcal{E}^{r,c}}[\frac{1}{p}] = \mathcal{E}^r$.
\end{proof}

Let $\mathcal{E}^{r'}$ be an unramified extension of $\mathcal{E}^r$ with
a Frobenius endomorphism $\sigma$ extending that on $\mathcal{E}^r$.  By
Theorem \ref{Embedding Theorem} we may extend $i_\sigma$ to an embedding of
$\mathcal{E}^{r'}$ in $\tb{B}^r$.  In particular, we may embed the maximal
unramified extension of $\mathcal{E}^r$ into $\tb{B}^r$ in a Frobenius
compatible way.  We let $\widetilde{\mathcal{E}}^r$ denote the 
$p$-adic completion
of the maximal extension of $\mathcal{E}^r$ inside of $\tb{B}^r$ and
we let $\mathcal{O}_{\widetilde{\mathcal{E}}^r} = \tb{A} \cap \widetilde{\mathcal{E}}^r$.  Similarly we define $\widetilde{\mathcal{E}}$ to 
be the $p$-adic completion of the maximal unramified extension of $\mathcal{E}$ in $\tb{B}$ and we
let $\mathcal{O}_{\widetilde{\mathcal{E}}} = \tb{A} \cap \widetilde{\mathcal{E}}$.  Note that
$\widetilde{\mathcal{E}}^r \subset \widetilde{\mathcal{E}}$.  There is an action
of $G_F$ on $\widetilde{\mathcal{E}}^r$, 
$\mathcal{O}_{\widetilde{\mathcal{E}}^r}$, $\widetilde{\mathcal{E}}$, and $\mathcal{O}_{\widetilde{\mathcal{E}}}$.  
The invariants are 
$(\widetilde{\mathcal{E}}^r)^{G_F} = \mathcal{E}^{r}$,
$\mathcal{O}_{\widetilde{\mathcal{E}}^r}^{G_F} = \mathcal{O}_{\mathcal{E}^{r}}$, $\widetilde{\mathcal{E}}^{G_F} = \mathcal{E}$ and
$\mathcal{O}_{\widetilde{\mathcal{E}}}^{G_F} = \mathcal{O}_{\mathcal{E}}$.
For a finite separable extension $K$ over $F$
we have $\mathcal{O}_{\mathcal{E}_{K}^{r,c}} = \mathcal{E}_K \cap \tb{A}^{r,c}$ by
Proposition \ref{witt rc vs naive rc}.
We define 
$\mathcal{O}_{\widetilde{\mathcal{E}}^{r,c}}=\widetilde{\mathcal{E}} \cap 
\tb{A}^{r,c}$.  Then $\mathcal{O}_{\widetilde{\mathcal{E}}^{r,c}}$ is the $p$-adic
completion of the $\cup_{K/F}\mathcal{O}_{\mathcal{E}_{K}^{r,c}}$, where the union
is taken over all finite separable extensions $K$ over $F$.

\begin{remark} The analogous result to Theorem \ref{Embedding Theorem} holds
for overconvergent Frobenius endomorphisms.  This allows us to define
a period ring $\bold{B}^\dagger$, which is the $p$-adic completion
of the maximal unramified extension of $\mathcal{E}^\dagger$ in $\tb{B}$.  When $\sigma(T)=(1+T)^p - 1$ then
$\bold{B}^\dagger$ is the same period ring that arises in $p$-adic Hodge theory.  
The ring $\bold{B}^\dagger$ is strictly bigger than the period
ring $\widetilde{\mathcal{E}}^\dagger$ used by Tsuzuki.  The ring $\widetilde{\mathcal{E}}^\dagger$
is the maximal unramified extension of $\mathcal{E}^\dagger$.  The ring
$\bold{B}^\dagger$ contains elements transcendental over $\mathcal{E}^\dagger$,
while $\widetilde{\mathcal{E}}^\dagger$ consists only of algebraic elements.  
Tsuzuki proves that all $(\phi, \nabla)$-modules over $\mathcal{E}^\dagger$ 
become trivial after a finite extension (i.e. they are quasi-constant),
which means only algebraic elements are needed.  In contrast,
the $(\phi,\nabla)$-modules we will consider only trivialize after
adding transcendental periods.

\end{remark}

\subsection{Some period modules}
In the proof of Theorem \ref{main local result} we will need to utilize some \emph{period submodules} of 
$\widetilde{\mathcal{E}}^{r}$ with very precise growth properties.
Recall from the proof of Theorem \ref{Embedding Theorem} that
there exists $c_0>0$ such that $T \in \tb{A}^{r,c_0}$.
For $c>c_0$ and $n>0$ we define 
\[  \widetilde{\mathcal{E}}^{r,c,n} := 
\Bigg\{ \sum_{i = 0}^\infty [x_i]p^i  \in \widetilde{\mathcal{E}} ~ \Big|  ~
\begin{array} {l}
p^{ir}c + v_T(x_i) \geq c ~~ \text{ for }i<n \\
p^{(n-1)r}c + |w_1(T)| + v_T(x_n) \geq c
\end{array}
\Bigg \}.  \] 
Consider an element $\sum [x_i]p^i$ of 
$\widetilde{\mathcal{E}}^{r,c,n}$.  The
first $n$ terms ($i<n$) will look like an arbitrary element of $\mathcal{O}_{\widetilde{\mathcal{E}}^{r,c}}$ (i.e.
$\widetilde{\mathcal{E}}^{r,c,n}/p^n=\mathcal{O}_{\widetilde{\mathcal{E}}^{r,c}}/p^n$).  
For $i>n+1$ the valuations of $x_i$ have no restrictions.  However
the bound on $v_T(x_n)$ is essentially the same as the bound
on $v_T(x_{n-1})$ up to a constant that does not depend on $n$.
The slight difference in these bounds is to allow $T$ to act
on $\widetilde{\mathcal{E}}^{r,c,n}$ via multiplication.  

\begin{lemma} \label{action of $T$}
Let $c > c_0 \frac{p^{2r} - 1}{p^{r} -1}$.  Then multiplication
by $T$ sends $\widetilde{\mathcal{E}}^{r,c,n}$ to
$\widetilde{\mathcal{E}}^{r,c,n}$
\end{lemma}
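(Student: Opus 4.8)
The plan is to track the Witt-component valuations under multiplication by $T = \alpha[T]$, where $\alpha = T[T^{-1}] \in \tb{A}^{r,c_0}$, using the inequality (\ref{Colmez ineq two: 2}) for $w_k$. First I would fix an element $x = \sum_{i\geq 0}[x_i]p^i \in \widetilde{\mathcal{E}}^{r,c,n}$ and observe that, since the Teichmüller part $[T]$ merely shifts $v_T$ by $1$, it suffices to understand $\alpha x$ and then add $1$ to every $v_T(x_i)$ at the end; so really I want to show that $\alpha x$ satisfies the two defining inequalities with the constant $c$ replaced by $c$ shifted appropriately, i.e. that $\alpha x \in \widetilde{\mathcal{E}}^{r,c',n}$ for a suitable $c'$, or more directly that $Tx$ itself satisfies the stated bounds. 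The key quantitative input is that $w_k(\alpha) \geq c_0 - p^{rk}c_0$ for all $k\geq 0$ (this is exactly the $\tb{A}^{r,c_0}$ condition, and is the content already used in Lemma \ref{partial valuations of powers of T}), together with $w_0(\alpha) = 0$ and the hypothesis $c > c_0\frac{p^{2r}-1}{p^r-1}$.

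The main computation splits according to which Witt component of $Tx = \alpha[T]x$ we are bounding. For $j < n$: by (\ref{Colmez ineq two: 2}), $w_j(\alpha x) \geq \min_{i+k\leq j}\{w_i(\alpha) + w_k(x)\}$; for $i = 0$ the term is $w_k(x) \geq c - p^{rk}c \geq c - p^{rj}c$, and for $i \geq 1$ (so $k \leq j-1 < n$) the term is at least $(c_0 - p^{ri}c_0) + (c - p^{rk}c)$, and one checks $c_0 - p^{ri}c_0 + c - p^{rk}c \geq c - p^{rj}c$ reduces to $p^{ri}c_0 + p^{rk}c \leq p^{rj}c + c_0$, which holds since $i+k\leq j$ and $c_0 \leq c$ (a convexity/super-additivity estimate on powers of $p^r$ exactly as in the proof that $\mathcal{E}^r$ is a field). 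Adding $1$ from the $[T]$-shift only helps. The delicate component is $j = n$: here the second defining inequality has the extra slack $|w_1(T)|$, which is precisely what absorbs the contribution of the $i=1$ term $w_1(\alpha)[x_{n-1}]$ feeding into the $n$-th Witt coordinate after multiplication — this is the point the remark before the lemma is flagging. I would bound $w_n(\alpha x)$ by $\min\{w_0(\alpha)+w_n(x),\ w_1(\alpha)+w_{n-1}(x),\ \min_{i\geq 2}(w_i(\alpha)+w_{n-i}(x))\}$; the first term is $\geq w_n(x)\geq c - p^{(n-1)r}c - |w_1(T)|$ by hypothesis on $x$, the second is $\geq w_1(\alpha) + (c - p^{(n-1)r}c) \geq -|w_1(T)| + c - p^{(n-1)r}c$ using $w_1(\alpha) \geq -|w_1(\alpha)| \geq -|w_1(T)|$ (since $w_1(T) = w_1(\alpha[T])$ and $w_0(\alpha)=0$ forces $w_1(T) \leq w_1(\alpha)$, so $|w_1(T)|\geq |w_1(\alpha)|$ — I should double-check the exact comparison here), and for $i\geq 2$ the term is $\geq (c_0 - p^{ri}c_0) + (c - p^{(n-i)r}c) \geq c - p^{(n-1)r}c - |w_1(T)|$, which is where the hypothesis $c > c_0\frac{p^{2r}-1}{p^r-1}$ gets used to guarantee $p^{ri}c_0 + p^{(n-i)r}c \leq p^{(n-1)r}c + c_0 + |w_1(T)|$ for all $2\leq i\leq n$ (worst case $i=2$).

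The hard part will be the $j=n$ boundary estimate: one must verify that the single extra constant $|w_1(T)|$ built into the definition of $\widetilde{\mathcal{E}}^{r,c,n}$ is exactly enough to swallow both the $w_1(\alpha)$-shift from multiplication and the cross terms with $i\geq 2$, uniformly in $n$, and the clean way to see this is the inequality $p^{ri}c_0 + p^{(n-i)r}c \le p^{(n-1)r}c + c_0$ for $i\ge 2$, which rearranges to $(p^{ri}-1)c_0 \le (p^{(n-1)r} - p^{(n-i)r})c$; bounding the right side below by $(p^{(n-1)r}-p^{(n-2)r})c = p^{(n-2)r}(p^r-1)c$ and the left side above by $(p^{nr}-1)c_0$ in the relevant range, the hypothesis $c > c_0\frac{p^{2r}-1}{p^r-1}$ is calibrated to make this go through. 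I would also remark that the Teichmüller shift $[T]$ is harmless because it acts as an isometry on each $w_k$ up to the global shift $+1$, and that multiplication preserves the "first $n$ terms lie in $\mathcal{O}_{\widetilde{\mathcal{E}}^{r,c}}/p^n$" description via Proposition \ref{witt rc vs naive rc}. Modulo these routine convexity estimates on powers of $p^r$, the lemma follows.
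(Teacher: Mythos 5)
Your proposal is correct and follows essentially the same route as the paper: bound the Witt components of $Tx$ via inequality (\ref{Colmez ineq two: 2}), handle the components below level $n$ by the ring property of $\mathcal{O}_{\widetilde{\mathcal{E}}^{r,c}}$-type bounds, and at level $n$ split the minimum into the $i\le 1$ terms (absorbed by the built-in $|w_1(T)|$ slack) and the $i\ge 2$ cross terms (controlled by $c > c_0\frac{p^{2r}-1}{p^r-1}$, worst case at the boundary). The factorization $T=\alpha[T]$ and the off-by-one in comparing $w_1(\alpha)$ with $w_1(T)$ are cosmetic and are repaired by the global shift of $1$ you already note.
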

\begin{proof} 
Let $x=\sum [x_i]p^i \in \widetilde{\mathcal{E}}^{r,c,n}$ and write
$y=Tx=\sum [y_i]p^i$.  Since 
$T$ and  $x$ are contained in $\mathcal{O}_{\widetilde{\mathcal{E}}^{r,c}} + p^n\mathcal{O}_{\widetilde{\mathcal{E}}}$
we know $y \in \mathcal{O}_{\widetilde{\mathcal{E}}^{r,c}} + p^n\mathcal{O}_{\widetilde{\mathcal{E}}}$.
This leaves us with checking $v_T(y_n) \geq c - p^{(n-1)r}c + w_1(T)$.
By inequality (\ref{Colmez ineq two: 2}) we
have \[v_T(y_n) \geq \min_{i+j=n} \{w_i(x) + w_j(T)\}\] so it
will suffice to prove 
\[ w_i(x)+w_j(T)\geq c - p^{(n-1)r}c - w_1(T), \] whenever
$i+j=n$.  We have $w_n(x)\geq c - p^{(n-1)r}c + w_1(T)$ since
$x \in \widetilde{\mathcal{E}}^{r,c,n} $ and $w_0(T) = 1$, which proves
the inequality for $i=n$ and $j=0$.  Similarly we have
$w_{n-1}(x) \geq c - p^{(n-1)r}c$, which gives the inequality for
$i=n-1$ and $j=1$.  Finally we assume that $1<i,j<n-1$.  The inequality in
the hypothesis gives 

\[c_0- p^{jr}c_0>c - p^{(j-1)r}c.\]
Then since $w_i(x) \geq c - p^{ir}c$ and $T \in \tb{A}^{r,c_0}$ we have
\begin{eqnarray*}
 w_i(x) + w_j(T) & \geq &   c - p^{ir}c + c - p^{(j-1)r}c\\
 & \geq & c - p^{nr}c.
\end{eqnarray*}

\end{proof}

For a finite extension $K$ over $F$ we define 
$\mathcal{E}_K^{r,c,n} = \widetilde{\mathcal{E}}^{r,c,n} \cap 
(\mathcal{O}_{\mathcal{E}_K}+ p^{n+1}\mathcal{O}_\mathcal{E})$.  Informally
we may think of $\mathcal{E}_K^{r,c,n}$ as elements in $\widetilde{\mathcal{E}}^{r,c,n}$ that look like elements of 
$\mathcal{O}_{\mathcal{E}_K}$ when reduced modulo $p^{n+1}$.  In particular, 
there is an action of $G_{K/F}$
on $\mathcal{E}_K^{r,c,n}/p^{n+1}$.

\subsection{The functors $D_{\mathcal{E}}$, $D_{\mathcal{E}^\dagger}$, and $D_{\mathcal{E}^r}$}  \label{section define functors}

Let $R$ be one of $\mathcal{E}$, $\mathcal{E}^\dagger$, or $\mathcal{E}^r$
all with residue field $F$.  Fix a Frobenius $\sigma$ on $R$.  Denote by $\Rep(G_F)$ the category of
continuous $\Q_p$ representations of $\G(F^{sep}/F)$ and let $\Rep^{fin}(G_F)$
be the subcategory of representations where the image of the inertia group is finite.
In this section we will define a functor $D_R$ from the $\Rep(G_F)$ to $\Mphi{R}$
and we describe results of Fontaine and Tsuzuki about these functors.

\begin{definition} 

Let $V \in \Rep(G_F)$.  Define $D_R(V)$ to be $(\widetilde{R} \otimes V)^{G_F}$.
The connection on $D_R(V)$ is given by $\nabla=\delta_T \otimes 1$ and the Frobenius
semi-linear morphism is given by $\phi=\sigma \otimes 1$.  We say that
$V \in \Rep(G_F)$ is $\widetilde{R}$-admissible if $\dim_K V = \dim_R D_R(V)$.

\end{definition}

\begin{theorem} \label{Fontaine-Tsuzuki} The functor \[D_\mathcal{E}: \Rep(G_F) \to \Mphiet{\mathcal{E}}\]
is an equivalence of categories.  The functor 
\[D_{\mathcal{E}^\dagger}: \Rep^{fin}(G_F) \to \Mphiet{\mathcal{E}^\dagger}\]
is an equivalence of categories.  In other words, all Galois representations
are $\widetilde{\mathcal{E}}$-admissible and then $\widetilde{\mathcal{E}}^\dagger$-admissible
Galois representations are those with finite monodromy.
\end{theorem}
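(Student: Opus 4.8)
The plan is to assemble the two statements from the work of Fontaine (the $\mathcal{E}$-case) and Tsuzuki (the $\mathcal{E}^\dagger$-case); below I record the descent lemmas that make the argument run and flag the one genuinely hard step.

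For $D_\mathcal{E}$ the proof rests on two facts about the period ring. First, $\mathcal{O}_{\widetilde{\mathcal{E}}}$ is $p$-adically complete with separably closed residue field $F^{sep}$ of characteristic $p$, and over such a ring every \'etale $\phi$-module $M$ is free with $M = \mathcal{O}_{\widetilde{\mathcal{E}}} \otimes_{\Z_p} M^{\phi=1}$, where $M^{\phi=1}$ is free of the same rank: reducing modulo $p$, the scheme of $\phi$-fixed vectors is \'etale over $F^{sep}$ (its Jacobian is invertible since $p$-th powers kill differentials), hence has the expected number of points, and one lifts by $p$-adic successive approximation, each step solving an Artin--Schreier equation over $F^{sep}$. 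Second, $H^1_{\mathrm{cont}}(G_F, GL_d(\mathcal{O}_{\widetilde{\mathcal{E}}})) = 1$, by d\'evissage from additive Hilbert~90 ($H^1(G_F, F^{sep}) = 0$) and $p$-adic completeness. Granting these, for $V \in \Rep(G_F)$ one trivializes $\widetilde{\mathcal{E}} \otimes_{\Q_p} V$ by a continuous $1$-cocycle valued in $GL_d(\mathcal{O}_{\widetilde{\mathcal{E}}})$, which is a coboundary by the second fact, so $D_\mathcal{E}(V) = (\widetilde{\mathcal{E}} \otimes V)^{G_F}$ is free of rank $\dim_{\Q_p} V$ with $\widetilde{\mathcal{E}} \otimes_\mathcal{E} D_\mathcal{E}(V) = \widetilde{\mathcal{E}} \otimes_{\Q_p} V$; since the Frobenius $\sigma \otimes 1$ has all slopes zero, $D_\mathcal{E}(V)$ lies in $\Mphiet{\mathcal{E}}$. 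Conversely, for an \'etale $(\phi, \nabla)$-module $M$ over $\mathcal{E}$, the first fact trivializes $\widetilde{\mathcal{E}} \otimes_\mathcal{E} M$ as a $\phi$-module, and the residual $G_F$-action (semilinear over $\widetilde{\mathcal{E}}$, commuting with $\phi$) is a $1$-cocycle in $GL_d(\mathcal{O}_{\widetilde{\mathcal{E}}})$, so that $(\widetilde{\mathcal{E}} \otimes_\mathcal{E} M)^{\phi=1}$ is a $d$-dimensional representation recovering $M$. These functors are mutually quasi-inverse, and the connection is carried along without extra work: $\delta_T$ extends uniquely to $\widetilde{\mathcal{E}}$, and a $\phi$-compatible connection on an \'etale $\phi$-module over $\mathcal{E}$ is \emph{unique} --- two of them differ by an $\mathcal{E}$-linear endomorphism $\Theta$ satisfying $\Theta \phi = \delta_T(\sigma(T)) \cdot \phi \Theta$, and since $\delta_T(\sigma(T))$ has positive Gauss valuation while $\phi$ has an invertible matrix, comparing valuations forces $\Theta = 0$ --- so the forgetful functor $\Mphiet{\mathcal{E}} \to \{\,\text{\'etale }\phi\text{-modules over }\mathcal{E}\,\}$ is itself an equivalence. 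This is \cite{Fontaine1}.

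For $D_{\mathcal{E}^\dagger}$, where $D_{\mathcal{E}^\dagger}(V) = (\widetilde{\mathcal{E}}^\dagger \otimes_{\Q_p} V)^{G_F} = D_\mathcal{E}(V) \cap (\widetilde{\mathcal{E}}^\dagger \otimes V)$ so that compatibility with $D_\mathcal{E}$ is built in, the easy direction is that finite monodromy implies $\widetilde{\mathcal{E}}^\dagger$-admissibility. If $\rho(I_F)$ is finite, choose a finite separable $L/F$ over which $V$ becomes unramified; an unramified representation is manifestly $\widetilde{\mathcal{E}}^\dagger$-admissible, its \'etale $\phi$-module being defined over the unramified constants, which lie in $\mathcal{E}_L^\dagger$; and finite Galois descent along $G_{L/F}$ --- via $(\mathcal{E}_L^\dagger)^{G_{L/F}} = \mathcal{E}^\dagger$ together with the vanishing of $H^1(G_{L/F}, GL_d(\mathcal{O}_{\mathcal{E}_L^\dagger}))$ --- makes $D_{\mathcal{E}^\dagger}(V)$ free of rank $\dim V$. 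Combined with the $\mathcal{E}$-case this shows $D_{\mathcal{E}^\dagger}$ is well defined and fully faithful on $\Rep^{fin}(G_F)$. The reverse direction --- essential surjectivity onto $\Mphiet{\mathcal{E}^\dagger}$, i.e.\ that an \'etale $(\phi, \nabla)$-module over $\mathcal{E}^\dagger$ corresponds to a representation with finite monodromy --- is Tsuzuki's unit-root $p$-adic local monodromy theorem: beginning from $M^\dagger \in \Mphiet{\mathcal{E}^\dagger}$, the overconvergence of \emph{both} the Frobenius matrix and the connection matrix drives a Frobenius-equivariant successive-approximation argument on a strict neighbourhood of the special point, showing that $M^\dagger$ becomes constant as a $(\phi, \nabla)$-module after a finite separable extension of $F$; uniqueness of the $\mathcal{E}^\dagger$-descent then identifies $M^\dagger$ with $D_{\mathcal{E}^\dagger}(V)$ for the associated $V \in \Rep(G_F)$, which therefore has finite image on inertia. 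I would invoke \cite{Tsuzuki1} here rather than reprove it.

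The one genuinely hard input is this last step: an \'etale $\phi$-module over $\mathcal{E}^\dagger$ \emph{without} its connection need not have finite monodromy, so the argument must use $\nabla$ in an essential way, through the radius-of-convergence estimates underlying the $p$-adic local monodromy theorem --- precisely the content we take from \cite{Tsuzuki1}. Everything else --- the two descent lemmas, the uniqueness of the $\phi$-compatible connection, and the finite Galois descent in the overconvergent setting --- is routine, and I would state each with a reference to \cite{Fontaine1} or \cite{Tsuzuki1}.
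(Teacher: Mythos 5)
Your proposal is correct and matches the paper, whose entire proof of this theorem is the citation ``the first statement is due to Fontaine and the second to Tsuzuki''; you simply unpack the standard arguments from those references (triviality of \'etale $\phi$-modules over $\mathcal{O}_{\widetilde{\mathcal{E}}}$, vanishing of $H^1(G_F, GL_d(\mathcal{O}_{\widetilde{\mathcal{E}}}))$, uniqueness of the $\phi$-compatible connection, finite Galois descent for the overconvergent case) and correctly identify the one non-routine input, Tsuzuki's unit-root local monodromy theorem, as something to cite rather than reprove. No gap.
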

\begin{proof} The first statement is due to Fontaine (\cite{Fontaine1}) and the second statement is due to 
Tsuzuki (\cite{Tsuzuki1}).
\end{proof}

\section{Ramification theory} \label{section ramification}

\subsection{The higher ramification groups}
We first
recall the definition and basic properties of the higher ramification groups.
Let $L$ be a separable extension of $F=k((T))$ such that $G_{L/F}$ is a finite
dimensional $p$-adic Lie group.  Let $v$ denote the $T$-adic valuation on $F$ normalized so that
$v(T)=1$.  
When $L$ is a finite
extension we let $T_L$ denote a uniformizing element of $L$ and we let $v_L$
denote the valuation on $L$ normalized so that $v_L(T_L)=1$.  For $s \in \R_{\geq 0}$ there is
an upper numbering ramification group $G_{L/F}^s$ satisfying:

\begin{itemize}
\item When $t>s$ we have $G_{L/F}^t \subset G_{L/F}^s$ .
\item The group $G_{L/F}^0$ is equal to $I_{L/F}$, the inertia group of $G_{L/F}$.
\item The intersection \[\bigcap_{s\geq 0} (G_{L/F})^s = \{0\}\].
\item Let $K$ be a finite extension of $F$ contained in $L$.  Then
$G_{K/F}^s=\frac{G_{L/F}^sG_{L/K}}{G_{L/K}}$.

\end{itemize}

Define the function
\[ \psi_{L/F}(y) = \int_{0}^y [G_{L/F}^0:G_{L/F}^s] ds. \]
Since the function $\psi_{L/F}$ is monotone increasing there
is an inverse function $\phi_{L/F}$.  The \emph{ramification polygon} of $L$ over $K$ is the
 graph of $y=\phi_{L/F}(x)$.  When $L$ is a finite extension of $F$
 there are lower ramification groups satisfying
 $(G_{L/F})_x=(G_{L/F})^{\phi_{L/F}(x)}$.  These lower ramification groups
 may defined explicitly as follows:
 \[ (G_{L/F})_x = \{ g \in G_{L/F} ~ | ~ v_L(g(T_L) - T_L) \geq x\}.\]
 In the case where $[L:F]$ is finite this filtration is finite.  That is,
 for large enough $s$ the groups $G_{L/F}^s$ and $(G_{L/F})_s$ contain
 only the identity element.  We let $\lambda_{L/F}$ (resp. $\mu_{L/K}$) denote the largest number
 such that $(G_{L/F})_s$ (resp. $G_{L/F}^s$) is not trivial.

 \subsection{Some auxillary results on the higher ramification groups}
In the remainder of this section we will prove several auxiliary lemmas
about the higher ramification groups that 
will be used in the proof of Theorem \ref{main local result}.

\begin{lemma} \label{upper to lower} Let $L$ be a 
totally ramified finite extension of $F$
and let $s>0$ satisfy $G_{L/F}^s=0$.
Then for any $g \in G_{L/F}$ we have 
$v(T_L - g(T_L)) < s$.  

\end{lemma}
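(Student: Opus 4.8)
The plan is to convert the hypothesis $G_{L/F}^s = 0$ into a statement about the \emph{lower}-numbered ramification filtration and then read off the valuation bound directly from the explicit description of that filtration. Write $n = [L:F]$. Since $L/F$ is finite Galois and totally ramified, the inertia group $G_{L/F}^0 = I_{L/F}$ is all of $G_{L/F}$, so $[G_{L/F}^0 : G_{L/F}^t] \le |G_{L/F}| = n$ for every $t \ge 0$; integrating the defining formula $\psi_{L/F}(y) = \int_0^y [G_{L/F}^0 : G_{L/F}^t]\,dt$ then gives $\psi_{L/F}(y) \le ny$ for all $y \ge 0$, and in particular $\psi_{L/F}(s) \le ns$. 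I would also record the normalization: the unique extension of the $T$-adic valuation $v$ to $L$ satisfies $v(x) = v_L(x)/n$ for $x \in L$, since $e(L/F)=n$ and $v(T) = v_L(T_L) = 1$.

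Next I would show $(G_{L/F})_x = 0$ for every $x \ge \psi_{L/F}(s)$. Indeed, $\phi_{L/F}$ is increasing (it is the inverse of the increasing function $\psi_{L/F}$), so $x \ge \psi_{L/F}(s)$ forces $\phi_{L/F}(x) \ge s$, and then $(G_{L/F})_x = G_{L/F}^{\phi_{L/F}(x)} \subseteq G_{L/F}^s = 0$ since the upper-numbered groups are decreasing. Now fix $g \in G_{L/F}$ with $g \ne 1$ (for $g = 1$ the difference $T_L - g(T_L)$ is zero). By the explicit description of the lower numbering, $g$ lies in $(G_{L/F})_m$ with $m = v_L(g(T_L) - T_L)$ finite, and that group is therefore nonzero; by the previous step this forces $m < \psi_{L/F}(s) \le ns$. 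Dividing by $n$ gives $v(T_L - g(T_L)) = m/n < s$, as claimed.

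The argument is essentially bookkeeping, so there is no serious obstacle; the points demanding care are (i) getting the normalization factor $n$ correct between $v$ and $v_L$, and (ii) keeping the inequality strict, which comes for free because the nontrivial element $g$ cannot lie in the trivial group $(G_{L/F})_{\psi_{L/F}(s)}$. One should also be sure the bound $[G_{L/F}^0 : G_{L/F}^t] \le n$ genuinely uses total ramification (so that $G_{L/F}^0$ is the whole group), though the weaker $|I_{L/F}| \le |G_{L/F}|$ would already suffice for the conclusion.
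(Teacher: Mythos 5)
Your argument is correct and is essentially the paper's own proof in slightly different packaging: the paper bounds the last break of the ramification polygon using $\phi_{L/F}'(x)\geq 1/|G_{L/F}|$, which is exactly your bound $\psi_{L/F}(y)\leq ny$ read through the inverse function, and both arguments then finish by converting $v_L$ to $v$ via the ramification index. No gaps.
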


\begin{proof} Let $(x_0,y_0)$ be the coordinates
for the last break in the ramification polygon of $G_{L/F}$.  
We remark that $x_0$ is the largest number such that
$(G_{L/F})_{x_0}$ is not trivial.
Since $G_{L/F}^s = 0$ we know that $y_0 < s$.  Let $m = |G_{L/F}|$.
At any $x$ we know that $\phi_{L}'(x) \geq \frac{1}{m}$,
provided the derivative exists, 
with equality for $x>x_0$.  This
implies $y_0\geq \frac{x_0}{m}$, which means that
$(G_{L/F})_{y_0 m} = (G_{L/F})_{sm}= 0$.  Thus
$v_{L}(T_L - g(T_L)) < sm$ for all $g$.  
The inertia degree of $L$ over $F$ is $m$, so that
$mv = v_{L}$, which proves the Lemma.

\end{proof}

\begin{lemma} \label{Other ramification Lemma}
Let $L$ be a totally ramified 
finite extension of $F$.  Then
\begin{equation*} \label{other ramification lemma eq}
v(x - g(x)) \geq v(x) + \frac{\lambda_{L/F}-1}{|G_{L/F}|}
\end{equation*} for any $x \in L^\times$ and
$g \in (G_{L/F})_{\lambda_{L/F}}$.
\end{lemma}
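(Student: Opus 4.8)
The plan is to reduce the statement to the action of $g$ on a uniformizer $T_L$ and then use the multiplicative structure of $L^\times$. First I would write any $x \in L^\times$ as $x = u T_L^m$ where $m = v_L(x)$ and $u \in \mathcal{O}_L^\times$, so that $v(x) = m/|G_{L/F}|$ (using that $L/F$ is totally ramified, so $v_L = |G_{L/F}| \cdot v$ on $L$). The key input is the definition of the lower ramification filtration: for $g \in (G_{L/F})_{\lambda_{L/F}}$ we have $v_L(g(T_L) - T_L) \geq \lambda_{L/F}$. I would then leverage the standard fact that this break condition propagates: for $g$ in the ramification group with break at least $\lambda$, one has $v_L(g(y) - y) \geq v_L(y) + \lambda - 1$ for \emph{every} $y \in L$ (not just the uniformizer). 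This is essentially because $g$ acts trivially on $\mathcal{O}_L/T_L^{\lambda}$-style quotients combined with the Leibniz-type behavior of $g(y) - y$ under multiplication.

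The main steps, in order: (1) verify the claim $v_L(g(y)-y) \geq v_L(y) + \lambda_{L/F} - 1$ for a uniformizer $y = T_L$ — this is immediate from the definition of $(G_{L/F})_{\lambda_{L/F}}$ since $v_L(T_L) = 1$; (2) extend to arbitrary powers $T_L^m$ by induction using $g(T_L^m) - T_L^m = g(T_L^m) - T_L^{m-1}g(T_L) + T_L^{m-1}g(T_L) - T_L^m = g(T_L^{m-1})(g(T_L)-T_L)\cdot(\text{correction}) + \dots$, more cleanly by writing $g(T_L^m) - T_L^m = \sum_{i} T_L^i (g(T_L) - T_L) g(T_L)^{\,m-1-i}$ type telescoping and taking valuations; (3) extend to units $u \in \mathcal{O}_L^\times$: since $g(u) - u$ — writing $u$ as a power series in $T_L$ with coefficients in the residue field lifted appropriately, and using that $g$ fixes the residue field (totally ramified case) — one gets $v_L(g(u) - u) \geq \lambda_{L/F} \geq \lambda_{L/F} - 1 + v_L(u) = \lambda_{L/F} - 1$; (4) combine via $g(x) - x = g(u)g(T_L^m) - uT_L^m = g(u)(g(T_L^m) - T_L^m) + (g(u) - u)T_L^m$ and take the minimum of the two valuations, then divide by $|G_{L/F}|$ to pass from $v_L$ back to $v$.

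I expect step (3), handling the unit part, to be the main obstacle, or at least the step requiring the most care: one needs that $g$ acting on $\mathcal{O}_L$ satisfies $v_L(g(a) - a) \geq \lambda_{L/F}$ for $a \in \mathcal{O}_L$, which follows from $g \in (G_{L/F})_{\lambda_{L/F}}$ and the characterization that $(G_{L/F})_t = \{g : v_L(g(a) - a) \geq t \text{ for all } a \in \mathcal{O}_L\}$ (equivalently $v_L(g(T_L) - T_L) \geq t$, these being equivalent for a well-chosen generator). Once that standard equivalence is invoked, step (3) is routine. Steps (1), (2), and (4) are elementary valuation bookkeeping. An alternative, slicker route avoiding the case split entirely: prove directly that for all nonzero $y \in L$, $v_L\big(\tfrac{g(y)}{y} - 1\big) = v_L(g(y) - y) - v_L(y) \geq \lambda_{L/F} - 1$ by noting $\tfrac{g(y)}{y}$ is a unit congruent to $1$, and that the map $y \mapsto \tfrac{g(y)}{y}$ is a homomorphism from $L^\times$ to $1 + \mathfrak{m}_L$; then it suffices to check the bound on a generating set of $L^\times$, namely on $T_L$ and on $\mathcal{O}_L^\times$ — reducing again to the two computations above but organizing them more cleanly. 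I would likely present this homomorphism version, as it makes the logarithmic-type estimate transparent and isolates exactly where $g \in (G_{L/F})_{\lambda_{L/F}}$ is used.
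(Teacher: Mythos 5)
Your proof is correct, but it takes a genuinely different route from the paper's. The paper decomposes $L$ \emph{additively} as $\bigoplus_{i=0}^{[L:F]-1} T_L^i F$, observes that $g$ is $F$-linear (so the claim for $x$ and for $cx$, $c\in F^\times$, are equivalent) and that the summands $c_iT_L^i$ have pairwise distinct valuations (so the claim for a sum follows from the claim for each summand), and thereby reduces everything to the single factorization $T_L^i-g(T_L^i)=(T_L-g(T_L))(T_L^{i-1}+\cdots+g(T_L)^{i-1})$; negative valuations are absorbed by the coefficients $c_i\in F$, so only the exponents $0\le i<[L:F]$ ever appear. You instead decompose \emph{multiplicatively} as $x=uT_L^m$, which forces you to control $g(u)-u$ for units $u\in\mathcal{O}_L^\times$; for that you invoke the standard characterization $(G_{L/F})_t=\{g: v_L(g(a)-a)\ge t\ \text{for all}\ a\in\mathcal{O}_L\}$, valid here because $\mathcal{O}_L=\mathcal{O}_F[T_L]$ for a totally ramified extension. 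That is an extra (though entirely standard) input the paper does not need, and your positive-power telescoping alone does not cover $m<0$ — but your closing observation that $y\mapsto g(y)/y$ is a homomorphism $L^\times\to 1+\mathfrak{m}_L^{\lambda_{L/F}-1}$ repairs this cleanly, since it reduces the bound to the generators $T_L$ and $\mathcal{O}_L^\times$ and handles negative exponents for free. What each approach buys: the paper's argument is self-contained, using only the definition of $(G_{L/F})_{\lambda_{L/F}}$ via the uniformizer; yours, once the full-ring characterization is granted, is structurally cleaner and makes the shift by $\lambda_{L/F}-1$ (rather than $\lambda_{L/F}$) transparent as the valuation of $g(T_L)/T_L-1$.
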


\begin{proof}
Let $x,y \in L$ such that $v(x) \neq v(y)$.  If the Lemma 
holds for both $x$ and $y$, then
the the inequality also holds for $x+y$.  Also, if the Lemma
holds for $cx$ when $c \in F$, then the Lemma holds for $x$.
Consider the decomposition of $L$: 
\[\bigoplus_{i=0}^{[L:F]-1} T_L^{i}F.\]  When $x \in T_L^iF$ and
$y \in T_L^j$ with $i\neq j$, we have $v(x)\neq v(y)$, so by
the above remarks it will suffice to prove the statement for powers
of $T_L$.  We have
\begin{equation*}
T_L^i - g(T_L^i) = (T_L - g(T_L))(T_L^{i-1}+...+g(T_L)^{i-1}).
\end{equation*}
Then $v(T_L-g(T_L))=\frac{\lambda_{L/F}}{|G_{L/F}|}$ and each
term in the sum term \[T_L^{i-1}+...+g(T_L)^{i-1}\] has
$T$-adic valuation $v(T_L^i) - \frac{1}{|G_{L/F}|}$, which implies
the lemma.
\end{proof}

\begin{lemma} \label{Bounding upper break in extension}
Let $K$ be a finite extension of $F$ and let $L$ be a finite extension of 
$K$.  Then
\[ \mu_{L/F} \leq \frac{\lambda_{L/F}}{|G_{K/F}|} + \mu_{K/F}.\]
\end{lemma}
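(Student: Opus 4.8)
The statement relates the top ramification break $\mu_{L/F}$ (in upper numbering) to the top break $\lambda_{L/F}$ (in lower numbering) and the top break $\mu_{K/F}$ of the subextension. The natural tool is the behavior of upper numbering under the quotient $G_{L/F} \twoheadrightarrow G_{K/F}$ together with Herbrand's theorem relating upper and lower numbering. The plan is as follows. First, recall that the largest upper-numbering break $\mu_{L/F}$ is related to the largest lower-numbering break $\lambda_{L/F}$ by $\mu_{L/F} = \phi_{L/F}(\lambda_{L/F})$, since the lower break $\lambda_{L/F}$ is by definition the largest $x$ with $(G_{L/F})_x \neq 0$, and $(G_{L/F})_x = (G_{L/F})^{\phi_{L/F}(x)}$.

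Next I would estimate $\phi_{L/F}(\lambda_{L/F})$ from above by splitting the integral defining $\psi_{L/F}$ (equivalently, analyzing $\phi_{L/F}$) at the point corresponding to $\mu_{K/F}$. For $x$ in the range where the only contributions to ramification come from $G_{L/K}$ (i.e. beyond the last break of $G_{K/F}$), the derivative $\phi_{L/F}'$ is essentially $\frac{1}{[G_{L/K} : (G_{L/F})_x \cap G_{L/K}]}$ times a factor accounting for the index $[G_{L/F}^0 : G_{L/F}^0 \cap \ldots]$; more usefully, one uses multiplicativity $\phi_{L/F} = \phi_{K/F} \circ \phi_{L/K}$ and $\psi_{L/F} = \psi_{L/K} \circ \psi_{K/F}$. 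Concretely: write $s = \phi_{L/K}(\lambda_{L/F})$ for the image of the top break under the lower-to-upper map for $L/K$. Then $\mu_{L/F} = \phi_{K/F}(s)$. Since $\phi_{K/F}$ has slope at most $1$ everywhere and slope exactly $\frac{1}{|G_{K/F}|}$ beyond $\mu_{K/F}$, we get $\phi_{K/F}(s) \leq \mu_{K/F} + \frac{s - \mu_{K/F}}{|G_{K/F}|} \leq \mu_{K/F} + \frac{s}{|G_{K/F}|}$ when $s \geq \mu_{K/F}$ (and $\phi_{K/F}(s) \le s \le \mu_{K/F}$ otherwise, which is even better). Finally I need $s = \phi_{L/K}(\lambda_{L/F}) \leq \lambda_{L/F}$, which holds because $\phi_{L/K}$ always has slope $\leq 1$ (the inertia index $[G_{L/K}^0 : G_{L/K}^0 \cap \ldots]$ is $\geq 1$, so $\psi_{L/K}' \geq 1$ and hence $\phi_{L/K}' \leq 1$, using $\phi_{L/K}(0) = 0$). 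Combining, $\mu_{L/F} \leq \mu_{K/F} + \frac{\lambda_{L/F}}{|G_{K/F}|}$.

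I expect the main obstacle to be bookkeeping with the two numbering conventions and making sure the slope estimates for $\phi_{K/F}$ are applied on the correct range — in particular verifying that $\phi_{K/F}$ has slope exactly $\frac{1}{|G_{K/F}|}$ for arguments exceeding $\mu_{K/F}$, which follows since for $u > \mu_{K/F}$ the group $G_{K/F}^u$ is trivial so $\psi_{K/F}$ has slope $[G_{K/F}^0 : G_{K/F}^u] = |I_{K/F}|$ there; when $K/F$ is not totally ramified one must be slightly careful that the relevant index is $|G_{K/F}|$ rather than $|I_{K/F}|$, but since we only care about the inertia part (ramification groups are trivial outside inertia) this is harmless, or one reduces to the totally ramified case as in Lemma \ref{upper to lower}. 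A secondary subtlety is the degenerate case $s \leq \mu_{K/F}$, which must be handled separately as noted above but only strengthens the bound.
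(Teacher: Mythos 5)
Your proposal is correct and is essentially the paper's own argument: both reduce to $\mu_{L/F}=\phi_{L/F}(\lambda_{L/F})\leq\phi_{K/F}(\lambda_{L/F})$ (you just unpack the inequality $\phi_{L/F}\leq\phi_{K/F}$ via Herbrand's composition $\phi_{L/F}=\phi_{K/F}\circ\phi_{L/K}$ and $\phi_{L/K}(x)\leq x$, where the paper cites it as a general property) and then use the explicit linear form of $\phi_{K/F}$ past its last break, with a separate degenerate case. One small correction: the slope of $\phi_{K/F}$ equals $\tfrac{1}{|G_{K/F}|}$ only for arguments exceeding $\lambda_{K/F}$, not $\mu_{K/F}$; but since $\lambda_{K/F}\geq\mu_{K/F}$ your stated bound $\phi_{K/F}(s)\leq\mu_{K/F}+\tfrac{s-\mu_{K/F}}{|G_{K/F}|}$ for $s\geq\mu_{K/F}$ still holds (split at $\lambda_{K/F}$, as the paper does), so the argument is unaffected.
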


\begin{proof}
By definition we have
 $\phi_{K/F}(\lambda_{K/F}) = \mu_{K/F}$.
For any $s> \lambda_{K/F}$ we have $\phi_{K/F}'(s) = \frac{1}{|G_{K/F}|}$
and therefore $\phi_{K/F}(s) = \mu_{K/F} + \frac{s-\lambda_{K/F}}{|G_{K/F}|}$.
A general property of ramification polygons is
that $\phi_{L/F}(s)\leq \phi_{K/F}(s)$ for all $s$.  In particular
if we have $\lambda_{L/F} \geq \lambda_{K/F}$ then
\begin{eqnarray*}
\mu_{L/F} &=& \phi_{L/F}(\lambda_{L/F}) \\
& \leq  & \phi_{K/F}(\lambda_{L/F}) \\
& = & \mu_{K/F} + \frac{\lambda_{L/F}-\lambda_{K/F}}{|G_{K/F}|} \\
& \leq & \frac{\lambda_{L/F}}{|G_{K/F}|} + \mu_{K/F}.
\end{eqnarray*}
If $\lambda_{L/F} < \lambda_{K/F}$ then
\begin{eqnarray*}
\mu_{L/F} & = &  \phi_{L/F}(\lambda_{L/F}) \\
& \leq & \phi_{K/F}(\lambda_{L/F}) \\
& < & \phi_{K/F}(\lambda_{K/F}) \\
& = & \mu_{K/F}.
\end{eqnarray*}
\end{proof}

\begin{lemma} \label{H1 for cyclic extensions} 
Let $K$ be an extension of $F$ and
let $L$ be an extension of $K$
of degree $p$ such that $(G_{L/K})_s=0$.  Then
$H^1(G_{L/K}, \mathcal{O}_{L})$
is killed by $T^{\ceil{\frac{s}{p}}}$.  

\end{lemma}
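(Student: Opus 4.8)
The plan is to compute $H^1(G_{L/K},\mathcal{O}_L)$ explicitly using that $G_{L/K}$ is cyclic of order $p$. Write $G_{L/K}=\langle g\rangle$ with $g^p=1$. For a cyclic group, Tate cohomology gives $H^1(G_{L/K},\mathcal{O}_L)=\ker(N)/\im(g-1)$, where $N=1+g+\dots+g^{p-1}$ is the norm (trace) map and $g-1$ acts on $\mathcal{O}_L$. By the normal basis theorem applied at the level of $L/K$ (or by additive Hilbert 90, which says $H^1(G_{L/K},L)=0$), every element of $\ker(N|_L)$ is of the form $(g-1)x$ for some $x\in L$; the content is to control the denominators, i.e. to show that if $y\in\mathcal{O}_L$ satisfies $N(y)=0$ then $T^{\ceil{s/p}}y\in(g-1)\mathcal{O}_L$.

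First I would reduce to a statement about the image of $g-1$ on the valuation ring. Since $(G_{L/K})_s=0$, Lemma \ref{upper to lower} gives $v(T_L-g(T_L))<s$ with respect to the normalized valuation $v$ on $F$ extended to $L$; equivalently, writing $e=[L:K]$ for the ramification contribution, $v_L(g(T_L)-T_L)<se$ in the $L$-normalized valuation, and in fact the largest lower break $\lambda_{L/K}$ satisfies $\lambda_{L/K}<se$. The key computational input is then the standard fact that for a uniformizer $T_L$ of $L$, the map $u\mapsto g(u)/u-1$ sends units to elements of valuation $\geq \lambda_{L/K}-\,(\text{something})$; more precisely, for $x\in L^\times$, $v_L(g(x)-x)\geq v_L(x)+\lambda_{L/K}-(e-1)$ or a similar bound — this is exactly the kind of estimate packaged in Lemma \ref{Other ramification Lemma}, which states $v(x-g(x))\geq v(x)+\frac{\lambda_{L/K}-1}{|G_{L/K}|}$ for $g$ in the last ramification group (and here $|G_{L/K}|=p$, so $g$ generates it). So I would use Lemma \ref{Other ramification Lemma} to bound from below the valuation drop incurred when inverting $g-1$ on the norm-zero submodule, and hence bound the power of $T$ needed to clear denominators.

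The main obstacle — and the step to treat carefully — is passing from additive Hilbert 90 over the field $L$ to a bound with the specific exponent $\ceil{s/p}$ over the ring $\mathcal{O}_L$. Concretely: given $y\in\mathcal{O}_L$ with $\mathrm{Tr}_{L/K}(y)=0$, one picks $\theta\in\mathcal{O}_L$ with $\mathrm{Tr}_{L/K}(\theta)$ a unit times a small power of $T$ (this is where the different of $L/K$, governed by $\lambda_{L/K}<se$, enters), and sets $x=\sum_{i=1}^{p-1} i\, g^i(\theta)\cdot y\,/\,\mathrm{Tr}(\theta)$ (the usual explicit splitting), so that $(g-1)x=-y + (\text{unit})\cdot\mathrm{Tr}(\cdots)=y$ up to the denominator $\mathrm{Tr}(\theta)$. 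Tracking $v_L(\mathrm{Tr}_{L/K}(\theta))$ via the different and converting back to the $F$-normalized valuation $v$ yields the factor $T^{\ceil{s/p}}$: the different exponent is $\lambda_{L/K}+\dots$, division by $p=|G_{L/K}|$ (from the $\mathrm{Tr}$ and the ramification) produces the $s/p$, and the ceiling appears because $v_L$ takes integer values while $\frac{s}{p}$ need not. I would organize this as: (1) identify $H^1$ with $\ker N/\im(g-1)$; (2) quote additive Hilbert 90 for surjectivity of $g-1$ onto $\ker N$ over $L$; (3) produce an explicit $x$ with controlled denominator using a trace-dual element; (4) estimate the denominator's $T$-valuation using Lemmas \ref{upper to lower} and \ref{Other ramification Lemma}; (5) round up to get the exponent $\ceil{s/p}$.
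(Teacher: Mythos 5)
Your starting point --- identifying $H^1(G_{L/K},\mathcal{O}_L)$ with $\ker(N)/\im(g-1)$ for the cyclic group of order $p$ --- is exactly the paper's, but the mechanism you propose for extracting the exponent $\ceil{s/p}$ has a genuine quantitative gap. If you split additive Hilbert 90 explicitly via a trace element, the denominator you must clear is $\mathrm{Tr}_{L/K}(\theta)$, and for $\theta\in\mathcal{O}_L$ one has $v_K(\mathrm{Tr}_{L/K}(\theta))\geq\lfloor \delta_{L/K}/p\rfloor$ with $\delta_{L/K}=(p-1)n$, where $n<s$ is the (unique) break; no choice of $\theta$ with positive valuation improves the net count. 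So clearing denominators this way kills $H^1$ only by $T^{\,\approx (p-1)n/p}$, which is weaker than $T^{\ceil{n/p}}$ by roughly a factor of $p-1$ and does not prove the lemma as stated. Your appeal to Lemma \ref{Other ramification Lemma} cannot close this gap: that lemma gives $v((g-1)x)\geq v(x)+\frac{\lambda_{L/K}-1}{p}$, i.e.\ an \emph{upper} bound on the valuation of any preimage $x$ of a given $y$ (showing the lemma's exponent is essentially optimal), whereas your step (4) needs a \emph{lower} bound on the valuation of some well-chosen preimage, which is the entire difficulty.

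The paper obtains the sharp exponent by a structural argument you would need to replace your steps (3)--(5) with: relative to the decomposition $\mathcal{O}_L=\bigoplus_{i=0}^{p-1}T_L^i\mathcal{O}_K$, the generators $e_i=(g-1)(T_L^i)$ of $\im(g-1)$ have the $p-1$ \emph{consecutive} valuations $n,n+1,\dots,n+p-2$, hence are linearly independent in $T_L^n\mathcal{O}_L/T_L^nT\mathcal{O}_L$ and extend, by Nakayama, by a single element $e'$ to an $\mathcal{O}_K$-basis of $T_L^n\mathcal{O}_L$. Since $\ker(N)/\im(g-1)$ is torsion (a rank count), any $x\in\ker(N)$ lies in the $K$-span of $e_1,\dots,e_{p-1}$; multiplying by $T^{\ceil{n/p}}$ lands it in $T_L^n\mathcal{O}_L$, where uniqueness of the basis expansion forces the $e'$-coefficient to vanish, so $T^{\ceil{n/p}}x\in\im(g-1)$. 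If you insist on the Hilbert 90 route, you would have to exhibit the extra cancellation in the numerator $\sum_i(y+gy+\cdots+g^{i-1}y)\,g^i(\theta)$ that compensates for the large valuation of $\mathrm{Tr}_{L/K}(\theta)$; your sketch does not establish this, and it is precisely where the content of the lemma lies.
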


\begin{proof}
The ramification filtration of $G_{L/K}$
has one break.  By the Hasse-Arf theorem the 
upper numbering of this break is
an integer $n$.  The lower numbering
of the break is $n$ and we have $n<s$.  That is, $v_{L}(T_{L} - g_0(T_{L})) = n$,
where $g_0$ is a generator of $G_{L/K}$.

Consider $N=\sum_{g\in  G_{L/K}} g$ and $r = 1 - g_0$,
viewed as elements of $\Z[G_{L/K}]$.  Both
$N$ and $r$ define $\mathcal{O}_{K}$-linear morphisms from
$\mathcal{O}_{L}$ to itself.  There is an isomorphism 
(see \cite[Section 8]{Atiyah}):
\[ H^1(G_{L/K}, \mathcal{O}_{L}) = \ker (N) / \im(r), \]
so it suffices to consider $\ker (N) / \im(r)$.  
The kernel of $r$ is $\mathcal{O}_K \subset \mathcal{O}_{L}$ and
$\mathcal{O}_{L}$ has $\mathcal{O}_K$-rank $p$, which means
that the rank of $\im(r)$ as an $\mathcal{O}_K$-module is $p-1$.
Similarly, the image of $N$ is contained in $\mathcal{O}_K$,
so that the rank of $\ker (N)$ as an $\mathcal{O}_K$-module is $p-1$.  
Therefore $\ker (N)/\im(r)$
is a torsion $\mathcal{O}_K$-module.  

We have the following decomposition
\[ \mathcal{O}_L = \mathcal{O}_K \oplus T_L 
\mathcal{O}_K \oplus ... T_L^{p-1} \mathcal{O}_K.\]
Then $\im(r)$ will be the $\mathcal{O}_K$-module 
of rank $p-1$ generated by the elements
$e_i=g(T_L^i)-T_L^i$.  We claim that $v_L(e_i) = n + i-1$.  
First write
 \[ e_i = (g(T_L) - T_L)\sum_{k+j=i-1} T_L^k g(T_L)^j.\]
Since $g(T_L)\equiv T_L \mod T_L^2$ we know that each term in
the sum is equivalent modulo $T_L^i$.
In particular we find 
\[ \sum_{k+j=i-1} T_L^k g(T_L)^j \equiv (p-1)T_L^{i-1} \mod T_L^i,\]
whose $T_L$-adic valuation is $i-1$ and therefore $v_L(e_i)=n+i-1$.

It follows that $e_i$ is contained in $T_L^n \mathcal{O}_L$ and not in
$T_L^n T \mathcal{O}_L$.  Now consider
the $p$-dimensional 
$k$-vector space $T_L^n\mathcal{O}_L/T_L^{n}T\mathcal{O}_L$.  Since 
the $e_i$ all have distinct valuations we know that
their images in $T_L^n\mathcal{O}_L/T_L^{n}T\mathcal{O}_L$ are linearly 
independent.  By Nakayama's Lemma
there is $e' \in T_L^n \mathcal{O}_L$ such that
$e_1,...,e_{p-1},e'$ is a $\mathcal{O}_K$-basis of $T_L^n\mathcal{O}_L$.  
Any $x \in \ker (N)$ can be written uniquely as a linear combination
of $e_1,...,e_{p-1}$ over $K$, since $\ker (N)/\im(r)$ is torsion.  
We know that $T^{\ceil{\frac{n}{p}}} x$ is
in $T_L^n\mathcal{O}_L$ so it can be written uniquely as a linear combination
of $e_1,...,e_{p-1},e'$ over $\mathcal{O}_K$.  The coefficient of
$e'$ has to be zero.  This means $T^{\ceil{\frac{n}{p}}} x$ is in $\im(r)$.  It follows
that $\ker (N)/\im(r)$ is killed by $T^{\ceil{\frac{n}{p}}} x$ and since $n<s$ the
Lemma follows.

\end{proof}

\begin{lemma} \label{cohomology Torsion} Let $L$ be a finite
extension of $F$ and let $s$ be an integer with
 $G_{L/F}^s=0$.  Then $H^1(G_{L/F}, \mathcal{O}_L)$
is killed by $T^s$.
\end{lemma}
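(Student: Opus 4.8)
The plan is to prove the sharper statement that $H^1(G_{L/F},\mathcal{O}_L)$ is annihilated by the trace ideal $\mathrm{Tr}_{L/F}(\mathcal{O}_L)\subseteq\mathcal{O}_F$, and then to check that $T^s$ lies in this ideal.

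For the annihilation I would argue directly with cocycles. Let $f\colon G_{L/F}\to\mathcal{O}_L$ be a $1$-cocycle and let $\theta\in\mathcal{O}_L$. Set
\[ m \;=\; \sum_{\gamma\in G_{L/F}}\gamma(\theta)\,f(\gamma)\;\in\;\mathcal{O}_L. \]
Using the cocycle identity $f(h\gamma)=f(h)+h\cdot f(\gamma)$ and reindexing the sum by $\gamma\mapsto h\gamma$, one computes for every $h\in G_{L/F}$ that $(h-1)m=-\mathrm{Tr}_{L/F}(\theta)\,f(h)$. Hence $\mathrm{Tr}_{L/F}(\theta)\cdot f$ is the coboundary of $-m$, so $\mathrm{Tr}_{L/F}(\theta)$ kills the class of $f$; as $\theta$ and $f$ were arbitrary, $\mathrm{Tr}_{L/F}(\mathcal{O}_L)\cdot H^1(G_{L/F},\mathcal{O}_L)=0$.

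It remains to show $T^s\in\mathrm{Tr}_{L/F}(\mathcal{O}_L)$. Passing to the maximal unramified subextension $F'$ of $L/F$ changes nothing here: $\mathrm{Tr}_{F'/F}$ is surjective on rings of integers, $\mathfrak{d}_{L/F}=\mathfrak{d}_{L/F'}$, and the higher ramification filtration in positive degree is unaffected, so $G_{L/F'}^s=0$; thus I may assume $L/F$ is totally ramified of degree $e$. Since $\mathrm{Tr}_{L/F}(\mathfrak{d}_{L/F}^{-1})=\mathcal{O}_F$, the trace of a fractional $\mathcal{O}_L$-ideal is a fractional $\mathcal{O}_F$-ideal computable from the different, and in particular $\mathrm{Tr}_{L/F}(\mathcal{O}_L)=\mathfrak{m}_F^{\,c}$ with $c=\lfloor v_L(\mathfrak{d}_{L/F})/e\rfloor$. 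So it is enough to check $v_L(\mathfrak{d}_{L/F})<e(\mu_{L/F}+1)$: this forces $c<\mu_{L/F}+1$, so (as $c$ and $s$ are integers with $s>\mu_{L/F}$) $c\le s$, i.e.\ $T^s\in\mathfrak{m}_F^{\,c}=\mathrm{Tr}_{L/F}(\mathcal{O}_L)$. The inequality $v_L(\mathfrak{d}_{L/F})<e(\mu_{L/F}+1)$ follows by combining Serre's formula $v_L(\mathfrak{d}_{L/F})=\sum_{i\ge0}(|(G_{L/F})_i|-1)$ with the identity $\sum_{i\ge0}|(G_{L/F})_i|=e(\mu_{L/F}+1)$, which is a short computation from Herbrand's theorem: $\lambda_{L/F}=\psi_{L/F}(\mu_{L/F})$, and on each interval between consecutive lower breaks the product of the step length with the order of the corresponding ramification group telescopes, through the slopes of $\phi_{L/F}$, to $e$ times the increment of the upper break; subtracting the $\lambda_{L/F}+1$ indices where $(G_{L/F})_i\neq 1$ gives the strict inequality.

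The two ingredients—annihilation by the trace ideal, and the different--ramification estimate—are both elementary, but I expect the point requiring care is the last inequality: one really does need the strict bound $v_L(\mathfrak{d}_{L/F})<e(\mu_{L/F}+1)$ together with the integrality of $c$ in order to land at $T^s$ rather than $T^{s+1}$, and for the unramified reduction to be harmless one must keep track of the fact that the different and the positive-degree ramification groups of $L/F$ and $L/F'$ coincide.
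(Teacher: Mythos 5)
Your proof is correct, but it takes a genuinely different route from the paper. The paper reduces to the totally wildly ramified case via restriction--inflation, filters $L/F$ by a chain of degree-$p$ subextensions, and handles each cyclic layer by an explicit computation with $\ker(N)/\im(1-g_0)$ (its Lemma \ref{H1 for cyclic extensions}), using Lemma \ref{upper to lower} to convert the upper-numbering hypothesis into lower-numbering bounds at each stage. You instead invoke the classical fact that the trace ideal $\mathrm{Tr}_{L/F}(\mathcal{O}_L)$ annihilates $H^1(G_{L/F},\mathcal{O}_L)$ (your cocycle computation $(h-1)m=-\mathrm{Tr}_{L/F}(\theta)f(h)$ is the standard one and is correct), identify that ideal as $\mathfrak{m}_F^{\lfloor \delta_{L/F}/e\rfloor}$ via the inverse different, and close with the identity $\delta_{L/F}=e(\mu_{L/F}+1)-(\lambda_{L/F}+1)$, which gives the strict bound $\lfloor \delta_{L/F}/e\rfloor\leq\lceil\mu_{L/F}\rceil\leq s$. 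Your unramified reduction is also sound: the positive-degree upper ramification groups and the different are unchanged under passage to the maximal unramified subextension, and surjectivity of the trace for unramified extensions transports the conclusion back down. What your route buys is a single uniform argument with no induction and a sharper, more intrinsic annihilator (essentially $T^{\lceil\mu_{L/F}\rceil}$ rather than $T^{s}$), at the cost of importing the codifferent formalism; the paper's route is more hands-on but loses constants through the chain of Lemmas \ref{upper to lower} and \ref{H1 for cyclic extensions}. One remark: you rederive the different--upper-break identity from Herbrand's theorem rather than citing the paper's Lemma \ref{the different and the breaks}; that is just as well, since the formula as stated there reads $\delta_{L/F}/|G_{L/F}|=\mu_{L/F}-\lambda_{L/F}/|G_{L/F}|$, whereas the correct identity (the one you use) is $\delta_{L/F}/|G_{L/F}|=\mu_{L/F}+1-(\lambda_{L/F}+1)/|G_{L/F}|$; your version is the one that makes the strict inequality $\delta_{L/F}<e(\mu_{L/F}+1)$ come out, and you correctly flag that the floor and the integrality of $s$ are what prevent landing at $T^{s+1}$.
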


\begin{proof} Let $K$ be the maximal tamely ramified extension
of $F$ contained in $L$.  The 
restriction-inflation
sequence gives

\[ 0 \to H^1(G_{{K}/{F}}, \mathcal{O}_{K})
\to  H^1(G_{L/{F}}, \mathcal{O}_L) 
\to H^1(G_{L/{K}}, \mathcal{O}_L) \to  H^2(G_{{K}/{F}}, \mathcal{O}_{K}). \]
We know that $\mathcal{O}_K$ is a projective 
$\mathcal{O}_F[G_{{K}/{F}}]$-module, so $H^i(G_{{K}/{F}}, \mathcal{O}_{K})$
is trivial for $i=1,2$.  Therefore, it suffices to prove the theorem for totally
wildly ramified extensions.

 There is a tower of fields 
$L=L_{-1} \supset L_0 \supset ...\supset L_r=F$ such that
$L_i$ is an extension of degree $p$ over $L_{i+1}$.
We will show $H^1(G_{L/{L_{i}}}, \mathcal{O}_L)$
is $T^s$-torsion by inducting on $i$.  When $i=-1$ there is
nothing to show.  For $i\geq 0$, consider the restriction-inflation
sequence

\[ 0 \to H^1(G_{{L_{i-1}}/{L_{i}}}, \mathcal{O}_{L_{i-1}})
\to  H^1(G_{L/{L_i}}, \mathcal{O}_L) 
\to H^1(G_{L/{L_{i-1}}}, \mathcal{O}_L). \]  By
our induction hypothesis $H^1(G_{L/{L_{i-1}}}, \mathcal{O}_L)$
is $T^s$-torsion, so it suffices to show that 
$H^1(G_{{L_{i-1}}/{L_{i}}}, \mathcal{O}_{L_{i-1}})$
is $T^s$-torsion.  Let $k$ be the integer such
that $kp=|G_{L_{i-1}/F}|$.
Since $G_{L_{i-1}/F}^s=0$ we know from Lemma \ref{upper to lower} that
$v_{T_{L_{i-1}}}(T_{L_{i-1}} - g(T_{L_{i-1}})) < k ps$ for
$g \in G_{L_{i-1}/L_{i}}$.  This means $(G_{L_{i-1}/L_{i}})_{k ps}=0$,
so by Lemma \ref{H1 for cyclic extensions} we know that
$T_{L_{i}}^{ks}$ annihilates  
$H^1(G_{L_{i-1}/L_i}, \mathcal{O}_{L_{i-1}})$.  The Lemma
then follows from the fact that $v_T(T_{L_{i}})=\frac{1}{k}$.

\end{proof}

\subsection{The different}

In this subsection we will prove some auxiliary lemmas relating the different
to higher ramification groups.  These Lemmas will be used in Section \ref{section global results}
together with the Riemann-Hurwitz-Hasse formula to prove asymptotic results on genus growth
(Theorem \ref{Main global theorem}).
Recall that for $L$ a finite separable extension of $F$ we define the different:
\[ \delta_{L/F} = \sum_{i=0}^{\infty} (|G_{L/F})_i| - 1). \]
If $f(X)$ is the
minimal polynomial of $T_L$ then $\delta_{L/F} = v_L(f'(T_L))$.

\begin{lemma} \label{the different and the breaks}
Let $L$ be a totally ramified finite extension of $F$.  Then
\[ \frac{\delta_{L/F}}{|G_{L/F}|} = \mu_{L/F} - \frac{\lambda_{L/F}}{|G_{L/F}|}.\]
\end{lemma}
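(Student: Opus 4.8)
The plan is to compute both sides directly from the definitions of $\delta_{L/F}$, the functions $\psi_{L/F}$ and $\phi_{L/F}$, and the relationship $(G_{L/F})_x = (G_{L/F})^{\phi_{L/F}(x)}$. First I would recall that $\delta_{L/F} = \sum_{i=0}^\infty (|(G_{L/F})_i| - 1)$, and that this sum can be rewritten as an integral: since $|(G_{L/F})_i|$ is a step function that is constant on intervals between consecutive integers and the lower-numbering filtration jumps at integers (by the Hasse--Arf theorem, the breaks are integers in the lower numbering too since $L/F$ is totally ramified), we have
\[ \delta_{L/F} = \int_0^\infty (|(G_{L/F})_x| - 1)\, dx. \]
This is the standard formula $\delta_{L/F} = \int_{-1}^\infty (|(G_{L/F})_x|-1)\,dx$ from Serre's \emph{Local Fields}, restricted to the case of no residue extension so the $x \in (-1,0)$ part vanishes.

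Next I would change variables using $x = \psi_{L/F}(y)$, so $dx = [G_{L/F}^0 : G_{L/F}^y]\, dy = \frac{|G_{L/F}^0|}{|G_{L/F}^y|}\,dy$ (using that $G_{L/F}$ is its own inertia group, so $G^0_{L/F} = G_{L/F}$ has order $|G_{L/F}|$ since $L/F$ is totally ramified), and $|(G_{L/F})_{\psi(y)}| = |G_{L/F}^y|$. Then
\[ \delta_{L/F} = \int_0^{\mu_{L/F}} \left(|G_{L/F}^y| - 1\right)\frac{|G_{L/F}|}{|G_{L/F}^y|}\, dy = |G_{L/F}| \int_0^{\mu_{L/F}} \left(1 - \frac{1}{|G_{L/F}^y|}\right) dy. \]
Splitting this integral gives $|G_{L/F}|\,\mu_{L/F} - |G_{L/F}|\int_0^{\mu_{L/F}} \frac{dy}{|G_{L/F}^y|}$. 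The remaining integral $\int_0^{\mu_{L/F}} \frac{dy}{|G_{L/F}^y|}$ is, by the definition $\psi_{L/F}(y) = \int_0^y [G^0 : G^s]\, ds$ together with $[G^0:G^s] = |G_{L/F}|/|G_{L/F}^s|$, simply $\frac{1}{|G_{L/F}|}\psi_{L/F}(\mu_{L/F})$; and $\psi_{L/F}(\mu_{L/F}) = \lambda_{L/F}$ since $\mu_{L/F}$ and $\lambda_{L/F}$ are the last breaks in the upper and lower numberings respectively and $\psi_{L/F}$ matches them. Substituting gives $\delta_{L/F} = |G_{L/F}|\,\mu_{L/F} - \lambda_{L/F}$, and dividing by $|G_{L/F}|$ yields the claim.

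The main obstacle, I expect, is being careful with the endpoint bookkeeping: verifying that $\psi_{L/F}$ really does carry $\mu_{L/F}$ to $\lambda_{L/F}$ (equivalently $\phi_{L/F}(\lambda_{L/F}) = \mu_{L/F}$, which is essentially how $\mu_{L/F}$ was defined in the text as the largest $s$ with $G^s_{L/F} \neq 0$) and that the integral representation of the different is valid in the totally ramified case without an off-by-one error coming from the conventional $\int_{-1}^\infty$ versus $\int_0^\infty$. One clean way to sidestep subtleties is to note both sides are piecewise-linear (indeed locally constant derivative) in the breaks and check the identity incrementally: if the lower-numbering breaks are $b_1 < \cdots < b_m$ with the corresponding subgroup orders, one verifies that both sides change by the same amount as one passes each break, which reduces everything to an elementary telescoping computation. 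Either approach is routine once the dictionary between $\psi$, $\phi$, lower and upper numbering is set up.
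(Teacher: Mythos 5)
The core of your computation --- the change of variables $x=\psi_{L/F}(y)$, the identity $\int_0^{\mu}\frac{|G_{L/F}|}{|G_{L/F}^y|}\,dy=\psi_{L/F}(\mu_{L/F})=\lambda_{L/F}$, and the resulting splitting $|G_{L/F}|\mu_{L/F}-\lambda_{L/F}$ --- is sound and is the standard way to prove this; the paper itself offers no argument, just a citation to Serre. But the step you yourself flag as the main obstacle contains a genuine error. You justify $\delta_{L/F}=\int_0^\infty(|(G_{L/F})_x|-1)\,dx$ by saying that in Serre's formula $\int_{-1}^\infty(|G_x|-1)\,dx$ the piece over $(-1,0)$ ``vanishes in the case of no residue extension.'' This is exactly backwards: that piece equals $|G_0|-1$, which for a \emph{totally ramified} extension is $|G_{L/F}|-1\neq 0$; it vanishes precisely in the unramified case. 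Concretely, with the lower-numbering groups constant on the intervals between consecutive integers, $\int_0^\infty(|(G)_x|-1)\,dx$ and $\sum_{i=0}^\infty(|(G)_i|-1)$ differ by the $i=0$ term $|G_{L/F}|-1$, so your first displayed equation is off by that amount under Serre's normalization. With Serre's breaks the correct identity is $\frac{\delta_{L/F}}{|G_{L/F}|}=(\mu_{L/F}+1)-\frac{\lambda_{L/F}+1}{|G_{L/F}|}$, not the one stated.

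Your final formula nevertheless agrees with the Lemma, but only because of a second, compensating off-by-one that you do not track: the paper defines $(G_{L/F})_x=\{g: v_L(g(T_L)-T_L)\geq x\}$ (rather than $\geq x+1$ as in Serre), so its last breaks $\lambda_{L/F},\mu_{L/F}$ are each one larger than Serre's, and it is exactly this shift that absorbs the missing $|G_{L/F}|-1$ (e.g.\ for a degree-$p$ extension with unique Serre break $b$ one has $\delta=(b+1)(p-1)$ and $\lambda=\mu=b+1$, and the identity checks out, whereas with $\lambda=\mu=b$ it fails). To repair the proof you must either carry out the telescoping/incremental check you allude to at the end, or redo the sum-to-integral step with the shifted indexing made explicit, verifying that $\int_0^\infty(|(G)_x|-1)\,dx$ computed with the paper's convention equals $v_L(f'(T_L))=\sum_{i\geq 0}(|G_i^{\mathrm{Serre}}|-1)$. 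As written, the justification given for the key equality is false, and the argument lands on the right answer only by an unacknowledged cancellation of two sign-of-one errors.
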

\begin{proof} This is \cite[Proposition IV.4]{Serre}.
\end{proof}

\begin{lemma} \label{lower bound through upper}
With the same notation as Lemma \ref{the different and the breaks}, let $0=s_0<s_1<...<s_n$ be real numbers
with $s_n = \mu_{L/F}$.  Then
\[ \lambda_{L/F} \leq \sum_{i=1}^n (s_{i} - s_{i-1})[G_{L/F}:G_{L/F}^{s_i}].\]
\end{lemma}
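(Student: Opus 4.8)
The plan is to translate the upper-numbering data into lower-numbering data via the function $\psi_{L/F}$ and then read off the claimed inequality from the geometry of the ramification polygon. Recall that $\lambda_{L/F}$ is the largest lower-numbering break, so $\lambda_{L/F} = \psi_{L/F}(\mu_{L/F})$ since $\mu_{L/F}$ is the largest upper-numbering break and $\psi_{L/F}$ carries upper to lower numbering. Writing $\lambda_{L/F} = \psi_{L/F}(s_n) = \int_0^{s_n}[G_{L/F}^0 : G_{L/F}^s]\,ds$ and splitting the integral along the partition $0 = s_0 < s_1 < \dots < s_n = \mu_{L/F}$, I would get
\[
\lambda_{L/F} = \sum_{i=1}^n \int_{s_{i-1}}^{s_i} [G_{L/F}^0 : G_{L/F}^s]\,ds.
\]

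Next I would bound each term of this sum. On the half-open interval $(s_{i-1}, s_i]$ the function $s \mapsto [G_{L/F}^0 : G_{L/F}^s]$ is non-decreasing (since $G_{L/F}^s$ shrinks as $s$ grows), so its value on that interval is at most its value at the right endpoint, namely $[G_{L/F}^0 : G_{L/F}^{s_i}]$. (There is a minor bookkeeping point about whether the jumps of the step function occur exactly at the $s_i$, but since we are taking an upper bound this only helps us; and one should note $[G_{L/F}^0:G_{L/F}^0]=1$ so the normalization is consistent — here $G_{L/F}^0 = I_{L/F}$ and for a totally ramified extension $I_{L/F} = G_{L/F}$.) Hence
\[
\int_{s_{i-1}}^{s_i} [G_{L/F}^0 : G_{L/F}^s]\,ds \leq (s_i - s_{i-1})\,[G_{L/F}^0 : G_{L/F}^{s_i}],
\]
and summing over $i$ yields exactly the asserted inequality.

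The only real obstacle is making sure the identity $\lambda_{L/F} = \psi_{L/F}(\mu_{L/F})$ is properly justified: one needs that the largest lower break equals $\psi_{L/F}$ applied to the largest upper break, which follows from the relation $(G_{L/F})_x = (G_{L/F})^{\psi_{L/F}(x)}$ recalled in Section \ref{section ramification} (equivalently $(G_{L/F})_{\psi_{L/F}(x)} = (G_{L/F})^{x}$ via $\phi_{L/F}$) together with the fact that $\psi_{L/F}$ is a continuous increasing bijection of $\R_{\geq 0}$, so it matches up the top of one filtration with the top of the other. Everything else is the elementary observation that integrating a non-decreasing step function over a subinterval is bounded above by the length of the subinterval times the value at the right endpoint.
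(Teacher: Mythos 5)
Your proof is correct and is essentially identical to the paper's: both write $\lambda_{L/F}=\psi_{L/F}(\mu_{L/F})=\int_0^{s_n}[G_{L/F}^0:G_{L/F}^s]\,ds$, split the integral along the partition, and bound each piece by $(s_i-s_{i-1})[G_{L/F}^0:G_{L/F}^{s_i}]$ using monotonicity of the index. (You even use $\psi_{L/F}$ correctly where the paper's prose has the typo $\phi_{L/F}$.)
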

\begin{proof}
This follows because $\lambda_{L/F} = \phi_{L/F}(\mu_{L/F})$.  More
precisely:

\begin{eqnarray*}
\lambda_{L/F} &= & \int_{0}^{s_n} [G_{L/F}:G_{L/F}^s]ds \\
& = & \sum_{i=1}^{n} \int_{s_{i-1}}^{s_{i}} [G_{L/F}:G_{L/F}^s]ds \\
& \leq & \sum_{i=1}^n (s_{i} - s_{i-1})[G_{L/F}:G_{L/F}^{s_i}]. 
\end{eqnarray*}

\end{proof}

\begin{corollary} \label{upper break and different bound}
Let $L$ be a finite extension of $F$.  Then
\[ \frac{\mu_{L/F}}{|G_{L/F}^{\mu_{L/F}}|}\leq \frac{\delta_{L/F}}{|G_{L/F}|}.\]
\end{corollary}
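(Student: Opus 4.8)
The plan is to combine Lemma~\ref{the different and the breaks} with a one-line upper bound for $\lambda_{L/F}$ extracted from Lemma~\ref{lower bound through upper}, applied to the coarsest possible partition. Since Lemma~\ref{the different and the breaks} is stated for a totally ramified extension, I will work in that setting, which is the one relevant for the genus estimates of Section~\ref{section global results}; write $m=|G_{L/F}|$, $h=|G^{\mu_{L/F}}_{L/F}|$, $\mu=\mu_{L/F}$ and $\lambda=\lambda_{L/F}$ for brevity. If $L=F$ there is nothing to prove, and if $L/F$ is unramified both sides vanish, so I may assume $L/F$ is ramified. Then $\mu\geq 1>0$, and $h\geq 2$, because $G^{\mu_{L/F}}_{L/F}$ is a nontrivial group by the very definition of $\mu_{L/F}$ as the largest number with $G^s_{L/F}\neq 0$.

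The first step is to apply Lemma~\ref{lower bound through upper} with $n=1$ and the partition $0=s_0<s_1=\mu$. Since $G^{s_1}_{L/F}=G^{\mu}_{L/F}$ has order $h$, this gives
\[
\lambda \;\leq\; \mu\,[G_{L/F}:G^{\mu}_{L/F}] \;=\; \frac{\mu m}{h},
\qquad\text{equivalently}\qquad
\frac{\lambda}{m}\;\leq\;\frac{\mu}{h}.
\]
A finer partition would only enlarge the right-hand side of Lemma~\ref{lower bound through upper}, hence weaken the bound, so the one-step partition is the correct choice here.

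The second step is to feed this into Lemma~\ref{the different and the breaks}, which asserts $\tfrac{\delta_{L/F}}{m}=\mu-\tfrac{\lambda}{m}$. Combining the two,
\[
\frac{\delta_{L/F}}{|G_{L/F}|}\;=\;\mu-\frac{\lambda}{m}\;\geq\;\mu-\frac{\mu}{h}\;=\;\mu\Bigl(1-\frac{1}{h}\Bigr)\;\geq\;\frac{\mu}{h}\;=\;\frac{\mu_{L/F}}{|G^{\mu_{L/F}}_{L/F}|},
\]
where the final inequality is the elementary estimate $1-\tfrac{1}{h}\geq\tfrac{1}{h}$, valid precisely because $h\geq 2$. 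This is the asserted inequality.

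There is no serious obstacle: once Lemmas~\ref{the different and the breaks} and~\ref{lower bound through upper} are in hand, the computation is essentially forced. The only points deserving a moment's care are (a) selecting the trivial one-step partition $0<\mu_{L/F}$ in Lemma~\ref{lower bound through upper} rather than a finer one; (b) recording that $1-1/h\geq 1/h$ for $h\geq 2$ is exactly what upgrades the bound $\mu-\mu/h$ to $\mu/h$; and (c) clearing the degenerate cases, i.e.\ checking that $L/F$ ramified forces $\mu_{L/F}\geq 1>0$ and $G^{\mu_{L/F}}_{L/F}$ nontrivial, and handling the unramified case (where both sides are zero) separately so that Lemma~\ref{the different and the breaks} is only invoked in the totally ramified situation.
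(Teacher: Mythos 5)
Your proof is correct and follows exactly the paper's own argument: apply Lemma~\ref{lower bound through upper} with the single-step partition $0=s_0<s_1=\mu_{L/F}$, substitute into Lemma~\ref{the different and the breaks}, and finish with $1-\tfrac{1}{h}\geq\tfrac{1}{h}$ for $h=|G_{L/F}^{\mu_{L/F}}|\geq p\geq 2$. Your extra care about the unramified/degenerate cases (which the paper passes over silently) is a harmless refinement, not a different route.
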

\begin{proof}

Combining Lemma \ref{the different and the breaks} and Lemma
\ref{lower bound through upper} with $s_0=0<s_1=\mu_{L/F}$, we obtain

\begin{eqnarray*} \frac{\delta_{L/F}}{|G_{L/F}|} &\geq & 
\mu_{L/F} - \big(\mu_{L/F} - s_0\big)\frac{1}{|G_{L/F}^{\mu_{L/F}}|} \\
& \geq & \mu_{L/F}\bigg(1 - \frac{1}{|G_{L/F}^{\mu_{L/F}}|}\bigg) \\
& \geq & \frac{\mu_{L/F}}{|G_{L/F}^{\mu_{L/F}}|},
\end{eqnarray*}
where the last inequality comes from the fact that $|G_{L/F}^{\mu_{L/F}}|\geq p$.

\end{proof}

\section{Proof of Theorem \ref{main local result}} \label{section Main result}
In this section we will prove Theorem \ref{main local result}.  Let $F$
be a local field of equal characteristic.  In particular $F=k((T))$,
where $k$ is a finite field.  Just as before we let $v$ be the valuation on $F$
normalized so that $v(T)=1$.  Let $V$ be a $d$ dimensional
$\Q_p$-vector space
and let $\rho: G_F \to GL(V)$ be a continuous representation.  
Fix a $G_F$-stable $\Z_p$-lattice $L\subset V$ of rank $d$.
We let $G$ be the image of $\rho$.  The lattice $L$ defines
a $p$-adic Lie filtration on $G$:
\[ G(n) = \ker (G \to GL(L/p^n L)).\]
We let $F_{n}$ be the fixed field of $G(n)$ and let 
$H(n)=G/G(n)$.  Note that $H(n)$ is the Galois group of $F_n$ over $F$.
Let $\mu_n$ (resp. $\lambda_n$) denote
the largest ramification break with the upper (resp. lower) numbering of $H(n)$.

The proof of Theorem \ref{main local result} will be broken up into three smaller
propositions, which fit together as follows:  
Let $\mathbf{e}=(e_1,...,e_d)$ be a basis of $L$ and let $\mathbf{a}=(a_1,...,a_d)$
be an $\mathcal{E}$-basis of 
$(\widetilde{\mathcal{E}}\otimes V)^{G_F}$ (we know that 
$(\widetilde{\mathcal{E}} \otimes V)^{G_F}$ is
an $\mathcal{E}$-vector space of dimension $d$ by Theorem \ref{Fontaine-Tsuzuki}).  
Then we have a period
matrix $A \in M_{d\times d} (\widetilde{\mathcal{E}})$ satisfying

\[ \label{period equation} \bold{a}=A\bold{e}.  \tag{P}\]
We may assume that $A \in M_{d\times d} (\mathcal{O}_{\widetilde{\mathcal{E}}})$ by multiplying our
basis $\mathbf{a}$ by a power of $p$.  
If $\mathbf{a}$ can be chosen so that $A$ lies in 
$M_{d\times d}(\mathcal{O}_{\widetilde{\mathcal{E}}^r})$, then it will follow that
$(\widetilde{\mathcal{E}}^r \otimes V)^{G_F}$ is an $\mathcal{E}^r$-vector space of dimension $d$.  
In Proposition \ref{D^r does the right thing}
we prove that if $\rho \in \Rep^r(G_F)$ then $A$ may be chosen to be in 
$M_{d\times d}(\mathcal{O}_{\widetilde{\mathcal{E}}^r})$.  
Consequently
$D^r(V)$ is a $(\phi,\nabla)$-module over ${\mathcal{E}^r}$ of
dimension $d$.  This shows that all representations in $\Rep^r(G_F)$ are
$\widetilde{\mathcal{E}}^r$-admissible.  Now let $f: V_1 \to V_2$ be 
a morphism in $\Rep^r(G_F)$.  Then we have a corresponding
morphism $f_{\mathcal{E}}: D(V_1) \to D(V_2)$ by Theorem \ref{Fontaine-Tsuzuki}.  
Using Proposition \ref{fully faithful} we know that extension of scalars functor 
$\Mphiet{\mathcal{E}^r} \to \Mphiet{\mathcal{E}}$ is fully faithful.
In particular, we know from Proposition \ref{fully faithful} that
$f_{\mathcal{E}}$ is actually the base change of a morphism $f_{\mathcal{E}^r}: D^r(V_1) \to D^r(V_2)$.  
This means that $D^r$
defines a functor from $\Rep^r(G_F)$ to  $\Mphiet{\mathcal{E}^r}$ that is fully faithful.

To show that $D^r$ is essentially surjective
when restricted to $\Rep^r(G_F)$ we fix a
 $(\phi,\nabla)$-module $M$ over 
${\mathcal{E}^r}$ of dimension $d$.  Choose a basis $\mathbf{a}=(a_1,...,a_d)$
of $M$.  
Proposition \ref{Trivializing log-decay diffeqs} tells us that $M$ trivializes
over ${\widetilde{\mathcal{E}}^r}$.  Thus 
$L_M=(M\otimes_{\mathcal{E}^r} {\widetilde{\mathcal{E}}^r})^{\phi=1}$
is a $\Q_p$-vector space of dimension $d$ with a continuous action $\rho_M$ of $G_F$.  Let 
$\mathbf{e}_M=(e_{M,1},...,e_{M,d})$ be a basis of $L_M$.  The two bases are related
by a period matrix 
\[ \mathbf{a}=A_M\mathbf{e}_M,\]
whose entries are in ${\widetilde{\mathcal{E}}^r}$.  Then Proposition 
\ref{periods over Er have the right ramification growth} tells us that
$\rho_M \in \Rep^r(G_F)$.  It follows that $D^r$ is essentially surjective when restricted
to $\Rep^r(G_F)$.  

\begin{lemma} \label{lifting Galois invariants}
For $n\geq 1$
we have \[ \label{lifting equation}
(\mathcal{O}_{\widetilde{\mathcal{E}}} \otimes_{\Z_p} L)^{G_F} 
\otimes_{\Z_p} \Z/p^n \Z
\cong (\mathcal{O}_{\widetilde{\mathcal{E}}} \otimes_{\Z_p} L/p^n L)^{G_F}. \tag{Q}\]
That is, any period of $L/p^n L$ lifts to a period of $L$.
\end{lemma}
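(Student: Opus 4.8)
The plan is to deduce the lemma from a single cohomological vanishing, namely $H^1_{\mathrm{cont}}(G_F,\mathcal{O}_{\widetilde{\mathcal{E}}}\otimes_{\Z_p}L)=0$, and to prove that vanishing by successive approximation modulo $p$, ultimately invoking the normal basis theorem. Write $M=\mathcal{O}_{\widetilde{\mathcal{E}}}\otimes_{\Z_p}L$; since $L$ is finite free over $\Z_p$ and $\mathcal{O}_{\widetilde{\mathcal{E}}}$ is $p$-torsion free and $p$-adically complete, so is $M$, and $M/p^nM=\mathcal{O}_{\widetilde{\mathcal{E}}}\otimes_{\Z_p}L/p^nL$. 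Granting the vanishing, I would prove the asserted isomorphism as follows. Injectivity of $M^{G_F}/p^n\to(M/p^nM)^{G_F}$ is immediate from $p$-torsion freeness of $M$ (if $x=p^ny$ is $G_F$-invariant then $p^n(g(y)-y)=0$, so $y$ is invariant). For surjectivity, take $\bar x\in(M/p^nM)^{G_F}$ and lift it to some $y\in M$; then $g(y)-y\in p^nM$ for every $g\in G_F$, so $c(g):=p^{-n}(g(y)-y)$ is a well-defined element of $M$, and $g\mapsto c(g)$ is a continuous $1$-cocycle (continuity follows from continuity of the $G_F$-action on $M$ and the fact that multiplication by $p^n$ is a homeomorphism onto $p^nM$). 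Writing $c(g)=g(z)-z$ with $z\in M$, the element $x:=y-p^nz$ is $G_F$-invariant and reduces to $\bar x$.

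To prove $H^1_{\mathrm{cont}}(G_F,M)=0$, I would first observe that $M/pM=(\mathcal{O}_{\widetilde{\mathcal{E}}}/p)\otimes_{\Z_p}L=F^{\mathrm{sep}}\otimes_{\mathbb{F}_p}\bar L$, where $\bar L=L/pL$ and $G_F$ acts diagonally, through the Galois action on $F^{\mathrm{sep}}$ and through $\rho\bmod p$ on $\bar L$; here one uses that $\mathcal{O}_{\widetilde{\mathcal{E}}}/p=F^{\mathrm{sep}}$, the residue field of the maximal unramified extension of $\mathcal{E}$. Granting the mod-$p$ statement $H^1_{\mathrm{cont}}(G_F,F^{\mathrm{sep}}\otimes_{\mathbb{F}_p}\bar L)=0$, treated below, the argument is by successive approximation: a continuous cocycle $c\colon G_F\to M$, reduced modulo $p$, becomes a coboundary, so after subtracting a coboundary $\partial z_1$ with $z_1\in M$ lifting the one over $F^{\mathrm{sep}}\otimes_{\mathbb{F}_p}\bar L$, the cocycle $c-\partial z_1$ takes values in $pM$; dividing by $p$ produces another continuous cocycle into $M$ (using $p$-torsion freeness), and iterating produces $z_1,z_2,\dots\in M$ with $c=\partial\big(\sum_{i\ge1}p^{i-1}z_i\big)$, the series converging by $p$-adic completeness of $M$. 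Hence every continuous cocycle is a coboundary.

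The crux, and the only place equal characteristic genuinely enters, is the mod-$p$ vanishing $H^1_{\mathrm{cont}}(G_F,F^{\mathrm{sep}}\otimes_{\mathbb{F}_p}\bar L)=0$. Since $\bar L$ is finite, $\rho\bmod p$ has open kernel, and because $F^{\mathrm{sep}}\otimes_{\mathbb{F}_p}\bar L$ is discrete, any continuous cocycle factors through $\Gamma=\mathrm{Gal}(E/F)$ for a finite Galois $E/F$ chosen large enough that the action on $\bar L$ factors through $\Gamma$ and the cocycle takes values in $E\otimes_{\mathbb{F}_p}\bar L$. By the normal basis theorem $E\cong F[\Gamma]$ as an $F[\Gamma]$-module, so $E\otimes_{\mathbb{F}_p}\bar L\cong F[\Gamma]\otimes_F(F\otimes_{\mathbb{F}_p}\bar L)$ with $\Gamma$ acting diagonally; untwisting the diagonal action by $\gamma\otimes n\mapsto\gamma\otimes\gamma^{-1}n$ exhibits this as a free $F[\Gamma]$-module, hence, after choosing an $\mathbb{F}_p$-basis of $F$, a direct sum of copies of the regular representation $\mathbb{F}_p[\Gamma]$, which is cohomologically trivial. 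Therefore the cocycle is a coboundary at the level of $\Gamma$, and hence a coboundary for $G_F$. I expect the finite-level reduction of the cocycle and this group-ring manipulation to be where the only real care is needed; all the surrounding steps are formal consequences of $p$-torsion freeness and $p$-adic completeness.
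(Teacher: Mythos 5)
Your proof is correct, but it takes a genuinely different route from the paper's. The paper treats injectivity as immediate and proves surjectivity by base-changing the inclusion along the faithfully flat map $\mathcal{O}_{\mathcal{E}}\to\mathcal{O}_{\mathcal{E}_{F_n}}$, exploiting two facts: that $G_{F_n}$ acts trivially on $L/p^nL$, so that $(\mathcal{O}_{\widetilde{\mathcal{E}}}\otimes L/p^nL)^{G_{F_n}}$ is free over $\mathcal{O}_{\mathcal{E}_{F_n}}/p^n$ on the images of the $e_i$, and that the period matrix $A$ furnished by Theorem \ref{Fontaine-Tsuzuki} reduces modulo $p^n$ to a matrix over $\mathcal{O}_{\mathcal{E}_{F_n}}/p^n$; surjectivity then descends. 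You instead isolate the cohomological content, $H^1_{\mathrm{cont}}(G_F,\mathcal{O}_{\widetilde{\mathcal{E}}}\otimes_{\Z_p}L)=0$, and prove it by d\'evissage to the mod-$p$ statement $H^1(G_F,F^{\mathrm{sep}}\otimes_{\mathbb{F}_p}\bar{L})=0$, which you get from the normal basis theorem and Shapiro's lemma. This is essentially the standard proof of exactness and admissibility in Fontaine's equivalence, so you are in effect reproving a piece of Theorem \ref{Fontaine-Tsuzuki} rather than quoting it; the paper's argument is shorter because the period matrix is already in hand from the surrounding proof of Proposition \ref{D^r does the right thing}, while yours is more self-contained and makes the underlying mechanism (additive Hilbert 90 over $F^{\mathrm{sep}}$, twisted by the finite module $\bar{L}$) explicit. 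Your supporting reductions are all sound: $\mathcal{O}_{\widetilde{\mathcal{E}}}/p\cong F^{\mathrm{sep}}$ by the paper's definition of $\widetilde{\mathcal{E}}$ as the $p$-adic completion of the maximal unramified extension of $\mathcal{E}$; the module $\mathcal{O}_{\widetilde{\mathcal{E}}}\otimes_{\Z_p}L$ is $p$-torsion free and $p$-adically complete with continuous $G_F$-action for the $p$-adic topology, which is exactly what the successive-approximation step and the cocycle $c(g)=p^{-n}(g(y)-y)$ require; and the passage to a finite quotient $\Gamma=\G(E/F)$ is justified because $F^{\mathrm{sep}}\otimes_{\mathbb{F}_p}\bar{L}$ is a discrete $G_F$-module.
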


\begin{proof} 
The left side of (\ref{lifting equation}) 
naturally injects into the right side.  Tensoring
with $\mathcal{O}_{\mathcal{E}_{F_n}}$ gives an injection
\[ f: 
((\mathcal{O}_{\widetilde{\mathcal{E}}} \otimes_{\Z_p} L)^{G_F} 
\otimes_{\Z_p}\Z/p^n \Z)
\otimes_{\mathcal{O}_{\mathcal{E}}} \mathcal{O}_{\mathcal{E}_{F_n}}
\hookrightarrow
(\mathcal{O}_{\widetilde{\mathcal{E}}} \otimes_{\Z_p} L/p^n L)^{G_F}
\otimes_{\mathcal{O}_{\mathcal{E}}} \mathcal{O}_{\mathcal{E}_{F_n}}.\]
We will prove that $f$ is also surjective.  Since
 $\mathcal{O}_{\mathcal{E}_{F_n}}$ is faithfully flat over 
 $\mathcal{O}_{\mathcal{E}}$ the isomorphism $f$ descends to
 give $(\ref{lifting equation})$.  There is an inclusion of
 $(\mathcal{O}_{\widetilde{\mathcal{E}}} \otimes_{\Z_p} L/p^n L)^{G_F}$ in
 $(\mathcal{O}_{\widetilde{\mathcal{E}}} \otimes_{\Z_p} L/p^n L)^{G_{F_n}}$.
 Since $(\mathcal{O}_{\widetilde{\mathcal{E}}} \otimes_{\Z_p} L/p^n L)^{G_{F_n}}$
 is an $\mathcal{O}_{\mathcal{E}_{F_n}}$-module, this gives an injective map
 
 \[g:(\mathcal{O}_{\widetilde{\mathcal{E}}} \otimes_{\Z_p} L/p^n L)^{G_F} 
 \otimes_{\mathcal{O}_{\mathcal{E}}} \mathcal{O}_{\mathcal{E}_{F_n}}
 \hookrightarrow 
 (\mathcal{O}_{\widetilde{\mathcal{E}}} \otimes_{\Z_p} L/p^n L)^{G_{F_n}}. \]
In particular $g \circ f$ is injective.  We will prove that $g \circ f$ is
surjective, which will imply that $f$ is surjective.

Let $\bar{e_i}$ be the image
of $e_i$ in $L/p^nL$.  Since $G_{F_n}$ acts trivially on $L/p^nL$, we see
that \[(\mathcal{O}_{\widetilde{\mathcal{E}}} \otimes_{\Z_p} L/p^n L)^{G_{F_n}}\]
is a free $\mathcal{O}_{\mathcal{E}_{F_n}}/p^n$-module
with basis $\bar{e_1},...,\bar{e_d}$.  Similarly, let $\bar{a_i}$ be the image of $a_i$ in 
$(\mathcal{O}_{\widetilde{\mathcal{E}}} \otimes_{\Z_p} L)^{G_F} 
\otimes_{\Z_p}\Z/p^n \Z$.  Consider the reduction modulo $p^n$ of the
period equation
(\ref{period equation}):

\[ \bar{\bold{a}} = \bar{A} \bar{\bold{e}}. \]
Since $\bar{\bold{a}}$ and $\bar{\bold{e}}$
are fixed by $G_{F_n}$ we see that the same is true for
$\bar{A}$.  In particular, we find 
$\bar{A} \in M_{d \times d} ( \mathcal{O}_{\mathcal{E}_{F_n}}/p^n ).$
It follows that each $e_i$ is contained in the image of $g \circ f$,
which means $g \circ f$ is surjective.

\end{proof}

\begin{proposition} \label{D^r does the right thing}
If $\rho \in \Rep^r(G_F)$ then the period matrix $A$ may be taken to have
entries in $\mathcal{O}_{\widetilde{\mathcal{E}}^r}$.
\end{proposition}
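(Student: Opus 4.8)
The plan is to build the period basis $\mathbf a$ as a $p$-adic limit of periods of $L/p^nL$, using Lemma~\ref{lifting Galois invariants} to lift compatibly, while arranging that the growth of the Teichm\"uller coefficients of the period matrix is controlled by the ramification breaks $\mu_n$. Concretely, I would fix $c>c_0$ large enough that Lemma~\ref{action of $T$} applies (so $c>c_0\frac{p^{2r}-1}{p^r-1}$), with $c$ depending only on $\rho$, $r$, and $F$, and prove by induction on $n$ the statement $(\ast_n)$: the basis $\mathbf a$ can be chosen modulo $p^{n+1}$ so that the resulting period matrix $A^{(n+1)}$ has entries in $\mathcal E_{F_{n+1}}^{r,c,n}/p^{n+1}$. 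Granting $(\ast_n)$ for all $n$, the matrices $A^{(n+1)}$ are compatible; since an element of $\widetilde{\mathcal E}^{r,c,n}$ satisfies $p^{ir}c+v_T(x_i)\ge c$ on each layer $i<n$, the limit matrix $A$ has $p^{ir}c+v_T(A_i)\ge c$ for every $i$, hence lies in $M_{d\times d}(\widetilde{\mathcal E}\cap\tb{A}^{r,c})=M_{d\times d}(\mathcal O_{\widetilde{\mathcal E}^{r,c}})\subset M_{d\times d}(\mathcal O_{\widetilde{\mathcal E}^r})$, as desired; here I use that $\mathcal O_{\widetilde{\mathcal E}^{r,c}}$ is $p$-adically complete.

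For the ramification input I would use that $\rho\in\Rep^r(G_F)$ means $G^{p^{nr}c}\subset G(n)$; since the upper numbering is compatible with passing to the quotient $H(n)=G/G(n)$, this forces $H(n)^{p^{nr}c}=\{1\}$, i.e.\ the top break satisfies $\mu_n\le p^{nr}c$. Choosing an integer $s_n$ with $\mu_n<s_n\le p^{nr}c+1$, Lemma~\ref{cohomology Torsion} shows $H^1(H(n),\mathcal O_{F_n})$ is killed by $T^{s_n}$, and the same torsion bound governs the lifting of Galois invariants modulo $p^n$. This is the point where Lemma~\ref{lifting Galois invariants} gets refined: a period of $L/p^nL$ not only lifts, but after multiplication by a power of $T$ of exponent $\le s_{n+1}$ it lifts to one with controlled Teichm\"uller valuation at the new layer.

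The heart is the inductive step. Assuming $(\ast_{n-1})$, I would start from $A^{(n)}\in M_{d\times d}(\mathcal E_{F_n}^{r,c,n-1}/p^n)$ and first lift it, by Lemma~\ref{lifting Galois invariants}, to \emph{some} period matrix $\widetilde A$ modulo $p^{n+1}$ with entries in $\mathcal O_{\mathcal E_{F_{n+1}}}/p^{n+1}$; the difficulty is that the new $n$-th Teichm\"uller layer of $\widetilde A$ \emph{a priori} has no control on its $T$-adic denominator. Since any two such lifts differ by $p^n$ times a period of $L/pL$, the $n$-th layer of $\widetilde A$ is determined up to the $F_1$-rational period data, and the cocycle relation $g(\widetilde A)=\widetilde A\rho(g)^{-1}$ shows this layer is defined over $\widehat F_{n+1}$ and represents a class in an $H^1$ of $H(n+1)$ with coefficients in $\mathcal O_{\widehat F_{n+1}}$. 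Multiplying that class by $T^{s_{n+1}}$ kills it (Lemma~\ref{cohomology Torsion}), so after correcting the chosen lift by a suitable $F_1$-rational period the $n$-th layer $A_n$ satisfies the bound $v_T(A_n)\ge c-p^{nr}c-|w_1(T)|$ demanded by $\widetilde{\mathcal E}^{r,c,n}$; here Lemma~\ref{action of $T$} is what keeps all the intermediate quantities inside the period submodules while multiplying and dividing by powers of $T$. Checking that the thresholds on $c$ coming from $(\ast_{n-1})$, from $\mu_{n+1}\le p^{(n+1)r}c$, and from Lemma~\ref{action of $T$} are simultaneously satisfiable — in particular that the $T$-power $s_{n+1}$ divided out is no larger than the $p^{(n+1)r}c$ budget allotted to layer $n$ — is the main obstacle, and is precisely what the deliberately weakened bound at the top layer in the definition of $\mathcal E_{F_{n+1}}^{r,c,n}$ is designed to accommodate. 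Assembling the compatible choices yields $\mathbf a$, hence a period matrix $A\in M_{d\times d}(\mathcal O_{\widetilde{\mathcal E}^{r,c}})\subset M_{d\times d}(\mathcal O_{\widetilde{\mathcal E}^r})$.
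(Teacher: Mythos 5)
Your plan is essentially the paper's own proof: the same induction on the Teichm\"uller layers using the submodules $\mathcal{E}_{F_{n+1}}^{r,c,n}$, the same use of Lemma~\ref{cohomology Torsion} to kill the obstruction class by a power of $T$ bounded via $\mu_{n+1}$, the same appeal to Lemmas~\ref{lifting Galois invariants} and~\ref{action of $T$}, and the same final bookkeeping showing the divided-out $T$-power fits inside the slack built into the top layer of $\widetilde{\mathcal{E}}^{r,c,n}$. The only (cosmetic) difference is that the paper lifts $T^{p^{nr}c_0}\bar{\epsilon}_i$ through the connecting map of an explicit short exact sequence rather than correcting an arbitrary lift by a coboundary.
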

\begin{proof} 

Let $M=({\widetilde{\mathcal{E}}}\otimes L)^{G_F}$ be the $(\phi,\nabla)$-module
corresponding to $\rho$.  The period matrix $A$ is unique up to multiplication
by an element of $B \in GL_d(\mathcal{O}_{\mathcal{E}})$,
which corresponds to choosing a different basis of $M$.  Thus we need to prove that there exists $B \in GL_d(\mathcal{O}_{\mathcal{E}})$
such that $BA \in GL_d(\mathcal{O}_{\widetilde{\mathcal{E}}^r})$.

To find $B$ we will find $B_n \in GL_d(\mathcal{O}_{\widetilde{\mathcal{E}}})$
for each $n\geq 1$ satisfying 
\[B_n \equiv B_{n-1} \mod p^{n-1}, \] 
\[\label{okay} B_n A \mod p^n \in GL_d(\mathcal{O}_{\mathcal{E}_{F_{n}}^{r,c}}/p^n\mathcal{O}_{\mathcal{E}_{F_{n}}^{r,c}}), \tag{*}\]
for a well chosen $c>0$.
Then $B=\lim_{n\to\infty} B_n$ will lie in $GL_d(\mathcal{O}_{\widetilde{\mathcal{E}}})$
and \[BA \in GL_d(\mathcal{O}_{\widetilde{\mathcal{E}}^{r,c}}) \subset GL_d(\mathcal{O}_{\widetilde{\mathcal{E}}^r}).\]
Let $c_0>0$ satisfy $G^{p^{nr}c_0}\subset G(n+1)$ for all $n$
and take $c$ large enough to satisfy
\[ c > 
\frac{p^{r}c_0 + |w_1(T)|}{p^r - 1} 
.\]

We proceed inductively.
We may take $B_1$ to be a power of $T$ times the identity matrix.
This is because $B_1A \mod p$ may be regarded as a matrix with
coefficients in $F^{alg}$, so that multiplying the entries by a large
enough power of $T$ will land in $\mathcal{O}_{F^{alg}} = 
\mathcal{O}_{\widetilde{\mathcal{E}}^{r,c}}/p\mathcal{O}_{\widetilde{\mathcal{E}}^{r,c}}$.  
Note that $e_i \mod pL$ is fixed by 
$G_{F_1}$.  Since $B_1A e_i \mod pL$ is also fixed by $G_{F_1}$ it follows
that $B_1A \mod p$ is contained in $\mathcal{O}_{\mathcal{E}_{F_1}^{r,c}}/p\mathcal{O}_{\mathcal{E}_{F_1}^{r,c}}$.

Now assume the existence of $B_{n}$.  We have the following exact sequence:
\[ 0 \to \mathcal{O}_{F_{n+1}} \to \mathcal{E}_{F_{n+1}}^{r,c,n} / p^{n+1} \mathcal{E}_{F_{n+1}}^{r,c,n}
\to \mathcal{O}_{\mathcal{E}_{F_{n+1}}^{r,c}} / p^{n}\mathcal{O}_{\mathcal{E}_{F_{n+1}}^{r,c}}  \to 0. \]
The surjective map is reduction modulo $p^{n}$ and the kernel
is $p^n\mathcal{E}_{F_{n+1}}^{r,c,n} / p^{n+1}\mathcal{E}_{F_{n+1}}^{r,c,n}\cong \mathcal{O}_{F_{n+1}}$
Applying the exact functor $M \to M \otimes_{\Z_p} L$ gives an
exact sequence of $\Z_p[G_F]$-modules.
\[ 0 \to \mathcal{O}_{F_{n+1}} \otimes_{\Z_p} L/pL \to 
\mathcal{E}_{F_{n+1}}^{r,c,n} / p^{n+1} \mathcal{E}_{F_{n+1}}^{r,c,n} \otimes_{\Z_p} L
\to \mathcal{O}_{\mathcal{E}_{F_{n+1}}^{r,c}} / p^n\mathcal{O}_{\mathcal{E}_{F_{n+1}}^{r,c}}  \otimes_{\Z_p} L \to 0. \]
As $G(n+1)$ acts trivially on each term in this sequence we
regard it as a sequence of $\Z_p[H(n+1)]$-modules.  Taking $H(n+1)$
invariants gives the exact sequence

\[ (\mathcal{E}_{F_{n+1}}^{r,c,n} / p^{n+1} \mathcal{E}_{F_{n+1}}^{r,c,n} \otimes_{} L)^{H(n+1)}
\to (\mathcal{O}_{\mathcal{E}_{F_{n+1}}^{r,c}} / p^n \mathcal{O}_{\mathcal{E}_{F_{n+1}}^{r,c}} \otimes_{} L) ^{H(n+1)}
\xrightarrow{\delta} H^1(H(n+1), \mathcal{O}_{F_{n+1}} \otimes_{} L/pL). \]
Let $\epsilon_i = B_{n}A e_i \in \mathcal{O}_{\widetilde{\mathcal{E}}} \otimes L$.
By our inductive hypothesis (\ref{okay}) we see that reducing
$\epsilon_i$ modulo $p^n$ gives  an element 
$\bar{\epsilon}_i \in \mathcal{O}_{\mathcal{E}_{F_{n+1}}^{r,c}} / p^n\mathcal{O}_{\mathcal{E}_{F_{n+1}}^{r,c}}  \otimes_{\Z_p} L$,
which is Galois invariant.  By assumption we know $H(n+1)^{p^{nr}c_0}=0$.  Lemma
\ref{cohomology Torsion} then shows that 
$H^1(H(n+1), \mathcal{O}_{F_{n+1}} \otimes_{\Z_p} L/pL)$ is
annihilated by $T^{p^{nr}c_0}$.  
Thus there exists $\bar{\delta}_i \in (\mathcal{E}_{F_{n+1}}^{r,c,n} / p^{n+1} \mathcal{E}_{F_{n+1}}^{r,c,n}\otimes_{\Z_p} L)^{H(n+1)}$ that reduces to 
$T^{p^{nr}c_0}\bar{\epsilon}_i$ modulo $p^n$.  Let $\delta_i$
be a Galois invariant lifting of $\bar{\delta}_i$ in 
$\widetilde{\mathcal{E}} \otimes L$, which exists by
Lemma \ref{lifting Galois invariants}.  We have the following relation
\[ \frac{1}{T^{p^{nr}c_0}}\delta_i \equiv \epsilon_i \mod p^n.\]
In particular, if $B_{n+1} \in GL_d(\mathcal{O}_\mathcal{E})$ 
is the unique invertible matrix such
that $\frac{1}{T^{p^{nr}c_0}}\boldsymbol{\delta}=B_{n+1}A \bold{e}$,
where $\boldsymbol{\delta}=(\delta_1,...,\delta_d)^{T}$, then 
$B_{n+1}\equiv B_n \mod p^n$.

It remains to show the entries of
$B_{n+1}$ have the desired growth properties.  We have Teichmuller
expansions:
\begin{eqnarray*}
B_{n+1}A &=& \sum_{i=0}^{n}[X_i]p^i + p^{n+1}S_{n+1} \\
B_n A & = & \sum_{i=0}^{n-1} [X_i]p^i + p^{n}S_n,
\end{eqnarray*}
where the $X_i$ are $d \times d$ matricies in $F^{alg}$.  
Since $\delta_i = T^{p^{nr}c_0}B_{n+1}A e_i$ reduces to
an element of 
$\mathcal{E}_{F_{n+1}}^{r,c,n} / p^{n+1} \otimes_{\Z_p} L/p^{n+1}$,
we know that $T^{p^{nr}c_0} B_{n+1}A \in M_{d\times d} 
(\tb{A}^{r,c,n})$.  It follows that 
\begin{eqnarray*}
T^{p^{nr} c_0}\sum_{i=0}^{n}[X_i]p^i & \in & \tb{A}^{r,c,n}. 
\end{eqnarray*}
We also have \[ \sum_{i=0}^{n-1}[X_i]p^i  \in \tb{A}^{r,c,n},\] by our
inductive assumption that $B_nA \mod p^n$ is in $GL_n(\mathcal{O}_{\mathcal{E}^{r,c}/p^{n}})$.
Since multiplication by $T$ preserves $\tb{A}^{r,c,n}$ 
by Lemma \ref{action of $T$} we know that 
\[T^{p^{nr}c_0}\sum_{i=0}^{n-1}[X_i]p^i \in \tb{A}^{r,c,n},\] which
gives $T^{p^{nr}c_0}[X_n]p^n \in \tb{A}^{r,c,n}$.  Write
$T^{p^{nr}c_0}[X_n]p^n = \sum [Y_i]p^i$ so that $Y_i=0$ for $i<n$.
Our definition of $\tb{A}^{r,c,n}$ tells us that 
$v(Y_n) \geq c -p^{(n-1)r}c - |w_1(T)|$.  Therefore
 \[ v(X_n) \geq c - p^{(n-1)r}c - |w_1(T)|-p^{nr}c_0.\]
The inequality $c > \frac{p^r c_0 + |w_1(T)|}{p^r - 1}$ gives
$c> \frac{p^{nr}c_0 + |w_1(T)|}{p^{nr} - p^{(n-1)r}}$.  Rearranging
this inequality we see that
 \[ v(X_n) \geq c - p^{(n-1)r}c - |w_1(T)|-p^{nr}c_0 > c-p^{nr}c,\]  
which shows \[ B_{n+1}A \mod p^{n+1}\in GL_d(\mathcal{O}_{\mathcal{E}_{F_{n+1}}^{r,c}}/p^{n+1}\mathcal{O}_{\mathcal{E}_{F_{n+1}}^{r,c}}).\]

\end{proof}

\begin{proposition} \label{periods over Er have the right ramification growth}

If the period matrix
$A$ satisfying \[\bold{a} = A \bold{e},\] lies in 
$M_{d\times d}( \mathcal{O}_{\widetilde{\mathcal{E}}^r})$, then
$\rho$ lies in $\Rep^r(G_F)$.

\end{proposition}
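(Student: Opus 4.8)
The plan is to read the $G_F$-action off the period matrix and convert the $r$-log-decay of $A$ — and of $A^{-1}$ — into the bound $\mu_n=O(p^{nr})$ on the largest ramification break of $H(n)=G_{F_n/F}$, which is exactly the defining condition of $\Rep^r(G_F)$. First I would set up the cocycle: since each $a_i$ is $G_F$-fixed and $g$ acts on $\bold e$ via $\rho(g)$, applying $g$ to $\bold a=A\bold e$ gives $A=g(A)\rho(g)$, hence $\rho(g)=g(A)^{-1}A$ and $A-g(A)=g(A)(\rho(g)-I)$. In the situation where the Proposition is applied — $A=A_M$ with $\bold a$ a basis of a $\phi$-stable $\mathcal O_{\mathcal E^r}$-lattice in $M$ and $\bold e$ a $\Z_p$-basis of $L_M$ — one has $A\in GL_d(\mathcal O_{\widetilde{\mathcal E}^r})$ with $A^{-1}$ also of $r$-log-decay (using Theorem \ref{Fontaine-Tsuzuki} and the étale hypothesis), so $g(A)$ is a unit, $v_p(\rho(g)-I)=v_p(A-g(A))$, and $\rho(g)\in G(n)$ if and only if $g(A)\equiv A\pmod{p^n}$. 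Running the argument of Lemma \ref{lifting Galois invariants} shows $A\bmod p^n$ has entries in $\mathcal O_{\mathcal E_{F_n}}/p^n$ with trivial stabilizer in $H(n)$ (namely the image of $G(n)$); hence each $G_{F_j/F_{j-1}}$ — elementary abelian of order $\le p^{d^2}$ for $j\ge 2$, of order $\le |GL_d(\mathbb F_p)|$ for $j=1$ — acts faithfully on the $j$-th ``layer'' $\bar B_j\in M_{d\times d}(F_j)$ of $A$, the reduction mod $p$ of $p^{1-j}(A-\sigma A)$ for a lift $\sigma$ of a nontrivial element of $G_{F_j/F_{j-1}}$.

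The one new input is a bound on how small the layers can be, and here $A^{-1}$ enters. Fix $g$ with $m:=v_p(\rho(g)-I)<n$; reducing $A-g(A)=g(A)(\rho(g)-I)$ modulo $p^{m+1}$ and comparing with $g(A)\bmod p$ identifies the layer as $\bar B=g(A_0)\,\overline{p^{-m}(\rho(g)-I)}$ with $A_0:=A\bmod p\in GL_d(F^{sep})$. Since $\overline{p^{-m}(\rho(g)-I)}$ is a nonzero matrix over $\mathbb F_p$ while the entries of $A_0^{-1}$ have $v_T\ge -c_1$ for a fixed $c_1$ (the $r$-log-decay of $A^{-1}$, via Lemma \ref{subrings} and Proposition \ref{witt rc vs naive rc}), solving $\overline{p^{-m}(\rho(g)-I)}=g(A_0)^{-1}\bar B$ forces $v_T(\bar B)\le c_1$, uniformly in $n$ and $g$; and the $r$-log-decay of $A-g(A)$ itself (Lemma \ref{subrings}, Lemma \ref{t-adic expansion}) gives the matching lower bound $v_T(\bar B)\ge -p^{mr}c_1$. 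These two facts are what break the apparent circularity one runs into when trying to bound $\mu_n$ directly.

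With them in hand I would bound $\mu_n$ via the different. Taking $\bar g$ nontrivial in the top ramification group $(H(n))_{\lambda_n}$ and lifting to $g$, the element $\bar g$ moves a lift $\tilde A\in M_{d\times d}(\mathcal O_{\mathcal E_{F_n}})$ of $A\bmod p^n$; applying Lemma \ref{Other ramification Lemma} level by level in the $p$-adic expansion of $\bar g(\tilde A)-\tilde A$ against the layer bounds of the previous paragraph gives $\lambda_n/|H(n)|=O(p^{nr})$. Separately, at each level $j$ the faithful action of $G_{F_j/F_{j-1}}$ on an integral primitive element built from $\bar B_j$ (cleared of the denominator $T^{\lceil p^{(j-1)r}c_1\rceil}$), whose $G_{F_j/F_{j-1}}$-conjugates differ in $T$-valuation by at most $c_1$, bounds $\delta_{F_j/F_{j-1}}/|G_{F_j/F}|$ by $O(p^{(j-1)r})$ using $|G_{F_j/F_{j-1}}|\le p^{d^2}$; the different telescopes along the tower and the geometric sum $\sum_j p^{(j-1)r}$ is $O(p^{nr})$, so $\delta_n/|H(n)|=O(p^{nr})$. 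Then Lemma \ref{the different and the breaks} gives $\mu_n=\lambda_n/|H(n)|+\delta_n/|H(n)|=O(p^{nr})$, i.e. $G^{cp^{nr}}\subseteq G(n)$ for a suitable $c$, which is precisely $\rho\in\Rep^r(G_F)$.

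The step requiring the most care — the main obstacle — is transferring the ramification estimates of Section \ref{section ramification} from the residue field $F_n$ to the period rings: the Teichmüller layers of an element of $\mathcal O_{\mathcal E_{F_n}}$ lie a priori only in the completed perfect closure of $F_n$, so one should work throughout with the genuinely $F_j$-valued differences $\bar B_j$ rather than with individual Teichmüller digits (or check that purely inseparable base change leaves the ramification filtration and the different unchanged), and one must reduce at the outset to the totally ramified case, absorbing the tame and residual parts of $F_n/F$ into the constants.
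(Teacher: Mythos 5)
Your core mechanism is the same as the paper's: write the Galois action as a cocycle on the period matrix, observe that for $g$ with $v_p(\rho(g)-I)=m$ the $m$-th Teichm\"uller layer of $A-g(A)$ equals $g(A_0)$ times a nonzero matrix over $\mathbb{F}_p$, so that its $T$-adic valuation is bounded above by a constant and below by $-p^{(m+1)r}c$, and then feed this into Lemma \ref{Other ramification Lemma} to get $\lambda_n/|H(n)|=O(p^{nr})$. (Your explicit appeal to control of $A^{-1}$ is stronger than needed --- only invertibility of the single matrix $A_0$ enters, which the paper's own step ``$v(g(A_n)-A_n)=0$'' also tacitly uses --- and it does import a hypothesis not present in the statement, though it holds where the Proposition is applied.) Where you genuinely diverge is the passage from the $\lambda_n$ bound to the $\mu_n$ bound: the paper closes the loop in one line by inducting on $\mu_n$ via Lemma \ref{Bounding upper break in extension}, $\mu_{n+1}\leq \lambda_{n+1}/|H(n)|+\mu_n$, with $|H(n+1)|/|H(n)|\leq p^{d^2}$ giving a convergent geometric recursion. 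You instead bound $\delta_{F_n/F}/|H(n)|$ layer by layer and invoke Lemma \ref{the different and the breaks}. Your route is workable in principle but is both unnecessary and the least justified part of your argument.

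Concretely, the soft spot is the primitive-element bound on $\delta_{F_j/F_{j-1}}$. The layer computation hands you, \emph{for each} nontrivial $\sigma\in G_{F_j/F_{j-1}}$, \emph{some} entry $\beta_\sigma$ of $A_{j-1}$ with $v_T(\beta_\sigma-\sigma\beta_\sigma)\leq c_1$; but the divisibility $\mathfrak{d}_{F_j/F_{j-1}}\mid g_\alpha'(\alpha)$ requires a \emph{single} integral generator $\alpha$ with $v(\alpha-\sigma\alpha)$ small for \emph{every} $\sigma$ simultaneously. One can manufacture such an $\alpha$ as a generic $\mathcal{O}_{F_{j-1}}$-linear combination of the cleared entries $T^N\beta_{kl}$ (note $N$ must be on the order of $p^{jr}c$, not $p^{(j-1)r}c_1$, since $v(A_{j-1})\geq -p^{jr}c$; this only changes constants), and the condition $v(\alpha-\sigma\alpha)<\infty$ for all $\sigma\neq 1$ then forces $\alpha$ to generate $F_j$ over $F_{j-1}$ --- but none of this is in your sketch, and over the small field $\mathbb{F}_p$ a naive linear-combination argument can fail. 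You correctly flag the remaining shared wrinkle: the Teichm\"uller digits live in $\widehat{F_{j}^{perf}}$ rather than $F_j$, so Lemma \ref{Other ramification Lemma} must be transported across the perfection (the paper elides this too). My recommendation: keep your first two paragraphs, replace the different computation by the paper's appeal to Lemma \ref{Bounding upper break in extension}, and you have a complete and shorter proof.
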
 

\begin{proof} The period matrix has a Teichmuller expansion:
\[A = \sum_{n=0}^\infty [A_n]p^n,\]
with $A_n \in M_{d\times d} (\mathcal{O}_{\widehat{F^{alg}}})$.  By
multiplying $A$ by a power of $ T^{-1}I_d$ we may assume that $v(A_0) = 0$.  The
condition $A \in M_{d\times d}( \mathcal{O}_{\mathcal{E}^r})$ guarantees 
the existence of $c>0$ such that 
\begin{equation}
 \label{period growth inequality} v(A_n)\geq  -p^{(n+1)r}c.   
\end{equation}
We will prove inductively two
things: The first is that
$\mu_n < p^{rn}c_0$
where 

\begin{equation}
\label{ramification bound} c_0> \max 
\bigg( \frac{p^{r}p^{d^2}c}{p^r-1}, \mu_1 \bigg).
\end{equation}  
The second is that $A_{n-1} \in \widehat{F_n^{perf}}$.
The bound on $\mu_n$ will imply $\rho \in \Rep^r(G_F)$.

When $n=1$ we have $\mu_1 < p^r c_0$ by our definition of $c_0$.  To see
that $A_0 \in F_1^{perf}$ 
consider the action of $G_{F_1}$ on $A\bold{e}$ reduced modulo $p$.  
Since $G_{F_1}$ acts trivially on $L/pL$ and it fixes $A\bold{e}$,
we know that $G_{F_1}$ must fix $A \mod p$.  This shows that
$A_0$ is contained in $\widehat{F^{alg}}^{G_{F_1}}=\widehat{F_1^{perf}}$,
which concludes our base case.  
Now assume the result for $n>1$.  
Since $G_{F_{n+1}}$ fixes $A\bold{e}$ and acts trivially
on $L/p^{n+1}L$ we see that $G_{F_{n+1}}$ fixes $A \mod p^{n+1}$, which gives 
$A_n \in \widehat{F^{alg}}^{G_{F_{n+1}}} = \widehat{F_{n+1}^{perf}}$.

It remains to show that $\mu_{n+1}< p^{r(n+1)}c_0$.  Let $g$ be
a nonzero element of $H(n+1)^{\mu_{n+1}}$.  Then $g$ has order $p$
and if we view $H(n+1)$ as a subgroup of $GL_d(\Z_p/p^{n+1}\Z_p)$ through $\rho$ 
we
may represent $g$ by a matrix $I_d + p^nB$ for some 
$B \in M_{d \times d}(\Z_p)$.  The equation
\[ g(A\bold{e}) - A\bold{e} \equiv 0 \mod p^{n+1}, \] thus
becomes 
\[ \big( \sum_{i=0}^n [g(A_i)]p^r \big)\big(I_d + p^nB)\bold{e}
- \big( \sum_{i=0}^n [A_i]p^r \big) \bold{e} \equiv 0 \mod p^{n+1}.\]
Since $g(A_i)=A_i$ for $i<n$ this yields the equation 
\[ [g(A_n)]p^n - [A_n]p^n \equiv B[A_0]p^n \mod p^{n+1}.\]
In particular we have $v(g(A_n) - A_n)= 0$.  
Lemma \ref{Other ramification Lemma} then gives
\[ 0 \geq v(A_n) + \frac{\lambda_{n+1} -1}{|H(n+1)|}.\]
By applying Inequality (\ref{period growth inequality})
we see that $|H(n+1)|p^{(n+1)r}c>\lambda_{n+1}$.  
If we view $G_{F_n}$ and $G_{F_{n+1}}$ as subgroups of $GL_d(\Z_p)$
it is clear that $\frac{|H(n+1)|}{|H(n)|}=|G_{F_n}/G_{F_{n+1}}|\leq p^{d^2}$.  
Then Lemma \ref{Bounding upper break in extension} 
and our inductive hypothesis that $\mu_n \leq p^{rn}c_0$ gives

\begin{eqnarray*}
\mu_{n+1} & \leq &  \frac{\lambda_{n+1}}{|H(n)|} + \mu_{n} \\
& \leq & \frac{|H(n+1)|}{|H(n)|} p^{(n+1)r}c + \mu_n \\
& \leq & p^{(n+1)r}cp^{d^2} + p^{rn}c_0.
\end{eqnarray*}
However the inequality (\ref{ramification bound}) implies
\[p^{(n+1)r}cp^{d^2} + p^{nr}c_0 < p^{(n+1)r}c_0,\]
from which we see $\mu_{n+1}< p^{(n+1)r}c_0.$

\end{proof}

\begin{proposition} \label{Trivializing log-decay diffeqs}
Let $M$ be a $(\phi,\nabla)$-module over $\mathcal{E}^r$.
Then $M$ trivializes after base changing to $\widetilde{\mathcal{E}^r}$.
\end{proposition}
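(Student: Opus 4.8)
The plan is to reduce the statement over $\mathcal{E}^r$ to the already-known trivialization statement over $\mathcal{E}$ and then control the growth of the trivializing basis using the period submodules $\widetilde{\mathcal{E}}^{r,c,n}$ introduced above. Concretely, let $M$ be an \'etale $(\phi,\nabla)$-module over $\mathcal{E}^r$ of rank $d$ with a chosen basis $\mathbf{a}=(a_1,\dots,a_d)$, and let $\Phi \in GL_d(\mathcal{E}^r)$ be the matrix of $\phi$ in this basis. Extending scalars to $\mathcal{E}$, Theorem \ref{Fontaine-Tsuzuki} (Fontaine's theorem) tells us that $M \otimes_{\mathcal{E}^r} \mathcal{E}$ trivializes over $\widetilde{\mathcal{E}}$: there is a matrix $Y \in GL_d(\mathcal{O}_{\widetilde{\mathcal{E}}})$ (after scaling by a power of $p$, and then we may further multiply by a power of $T\cdot I_d$) with $\mathbf{Frob}(Y)=\Phi Y$, so that the columns of $Y^{-1}\mathbf{a}$ span the $\phi$-invariants. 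It therefore suffices to show $Y$ can be chosen in $GL_d(\mathcal{O}_{\widetilde{\mathcal{E}}^r})$, because then $M \otimes_{\mathcal{E}^r}\widetilde{\mathcal{E}}^r$ already has a full-rank space of $\phi$-invariants over $\mathcal{O}_{\widetilde{\mathcal{E}}^r}$ (and the $\nabla$-structure is handled automatically since $\phi$ and $\nabla$ commute and $\widetilde{\mathcal{E}}^r$ carries the compatible derivation).

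The mechanism for controlling $Y$ is a $p$-adic successive-approximation argument running in parallel with the one in Proposition \ref{D^r does the right thing}. I would build matrices $Y_n \in GL_d(\mathcal{O}_{\widetilde{\mathcal{E}}})$ with $Y_n \equiv Y_{n-1} \bmod p^{n-1}$ and with $Y_n \bmod p^n$ lying in $GL_d(\mathcal{O}_{\widetilde{\mathcal{E}}^{r,c}}/p^n)$ for a well-chosen $c>0$, so that $Y=\lim Y_n$ lands in $GL_d(\mathcal{O}_{\widetilde{\mathcal{E}}^{r,c}}) \subset GL_d(\mathcal{O}_{\widetilde{\mathcal{E}}^r})$. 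The base case $n=1$ is as before: reduce $Y\bmod p$ modulo $p$, regard it as a matrix over $\widehat{F^{alg}}$, and clear denominators by multiplying by a power of $T$, landing in $\mathcal{O}_{\widehat{F^{alg}}}=\mathcal{O}_{\widetilde{\mathcal{E}}^{r,c}}/p$. For the inductive step, suppose $Y_n$ is constructed; write $Y_n\bmod p^{n+1}$ in Teichm\"uller coordinates and let $Z_{n+1}$ be the candidate correction. The equation $\mathbf{Frob}(Y_{n+1})=\Phi Y_{n+1}$ modulo $p^{n+1}$, combined with the inductive control of $Y_n$ modulo $p^n$ and Lemma \ref{Frobenius lemma} (which moves $\widetilde{\mathbf{A}}^{r,c}$ to $\widetilde{\mathbf{A}}^{r,pc}$ under $\mathbf{Frob}$), forces the next Teichm\"uller layer $[X_n]p^n$ of $\Phi Y_n$ to satisfy a growth bound of the shape $v_T(X_n) \geq c - p^{nr}c - (\text{bounded error})$; here the bounded error comes from $\Phi \in GL_d(\mathcal{O}_{\mathcal{E}^r})$, so $\Phi \in M_{d\times d}(\widetilde{\mathbf{A}}^{r,c_0})$ for some fixed $c_0$ independent of $n$, and from the action of powers of $T$, using Lemma \ref{action of $T$} to stay inside $\widetilde{\mathcal{E}}^{r,c,n}$. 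Choosing $c$ large relative to $c_0$ and $|w_1(T)|$ — exactly as in the displayed inequality $c > \frac{p^r c_0 + |w_1(T)|}{p^r-1}$ of Proposition \ref{D^r does the right thing} — absorbs the error and yields $v_T(X_n) > c - p^{nr}c$, which is the required containment in $\mathcal{O}_{\widetilde{\mathcal{E}}^{r,c}}/p^{n+1}$.

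The main obstacle I anticipate is purely bookkeeping: Fontaine's trivialization over $\widetilde{\mathcal{E}}$ produces $Y$ with no a priori growth control on its Teichm\"uller layers beyond $n$, so at each stage one must show that \emph{correcting} $Y_n$ to $Y_{n+1}$ does not destroy the growth bound already achieved in lower layers. This is why one works with the refined modules $\widetilde{\mathcal{E}}^{r,c,n}$ rather than with $\widetilde{\mathbf{A}}^{r,c}$ directly — the extra slack in the $n$-th layer of $\widetilde{\mathcal{E}}^{r,c,n}$ (the $|w_1(T)|$ fudge factor) is precisely what lets multiplication by $T^{p^{nr}c_0}$, needed to clear the Frobenius equation, remain inside the module. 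One also has to check that the limit matrix $Y$ is still invertible over $\mathcal{O}_{\widetilde{\mathcal{E}}^r}$: this follows because $Y \equiv Y_1 \bmod p$ with $Y_1$ a unit times a power of $T$, and $\mathcal{O}_{\widetilde{\mathcal{E}}^r}$ has the same residue field $F^{alg}$, so $\det Y$ differs from a power of $T$ by a $p$-adic unit, hence $Y^{-1} \in M_{d\times d}(\widetilde{\mathcal{E}}^r)$. Once $Y \in GL_d(\mathcal{O}_{\widetilde{\mathcal{E}}^r})$ is in hand, $\{Y^{-1}a_i\}$ is a basis of $(M \otimes_{\mathcal{E}^r}\widetilde{\mathcal{E}}^r)^{\phi=1}$ over $\Q_p$ of the right dimension, which is exactly the assertion that $M$ trivializes over $\widetilde{\mathcal{E}}^r$.
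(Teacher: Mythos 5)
Your opening reduction is exactly right and matches the paper's: base change to $\mathcal{E}$, invoke Fontaine's theorem to get a trivializing period matrix $Y$ over $\widetilde{\mathcal{E}}$ satisfying $\mathbf{Frob}(Y)=\Phi Y$, and then show $Y$ actually lies in $GL_d(\mathcal{O}_{\widetilde{\mathcal{E}}^r})$ (invoking Corollary \ref{naive log vs witt log}--type intersections at the end). But the machinery you propose for the second step is both heavier than necessary and not coherent as stated. First, there is essentially no freedom to ``correct'' $Y_n$ to $Y_{n+1}$: a solution of $\mathbf{Frob}(Y)=\Phi Y$ is unique up to right multiplication by $GL_d(\Z_p)$, which does not affect growth, so either the given $Y$ has $r$-log-decay or it does not --- the successive-approximation framework of Proposition \ref{D^r does the right thing} does not transplant. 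In that proposition the corrections $B_n$ and the multiplications by $T^{p^{nr}c_0}$ exist precisely to kill a Galois-cohomological obstruction ($H^1(H(n+1),\mathcal{O}_{F_{n+1}}\otimes L/pL)$ via Lemma \ref{cohomology Torsion}); no such obstruction is present here, so the modules $\widetilde{\mathcal{E}}^{r,c,n}$ and Lemma \ref{action of $T$} have no role to play. Second, the layer-by-layer bound you sketch is self-referential: estimating the $n$-th Teichm\"uller layer of $\Phi Y$ via inequality (\ref{Colmez ineq two: 2}) produces the term $w_0(\Phi)+w_n(Y)$, which involves the very quantity you are trying to bound, and your write-up does not say how this is resolved.

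The paper resolves it in one stroke, with no induction: from $CA=\sigma(A)$ and the identity $w_n(\sigma(A))=p\,w_n(A)$ (Lemma \ref{Frobenius lemma}), together with the monotonicity $w_i(C)\geq w_n(C)$ and $w_j(A)\geq w_n(A)$ for $i,j\leq n$, one gets
\[ p\,w_n(A) \;=\; w_n(CA) \;\geq\; \min_{i+j\leq n}\bigl(w_i(C)+w_j(A)\bigr) \;\geq\; w_n(C)+w_n(A), \]
hence $w_n(A)\geq \frac{w_n(C)}{p-1}$; since $C\in M_{d\times d}(\tb{A}^r)$ this puts $A$ in $M_{d\times d}(\tb{A}^r\cap\widetilde{\mathcal{E}})$. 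The self-referential term is absorbed because the Frobenius multiplies $w_n$ by $p>1$ --- this is the same trick as in Proposition \ref{fully faithful}. I would recommend replacing your entire second and third paragraphs with this estimate; as written, your inductive step does not close.
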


\begin{proof} Let $a_1,...,a_d$ be a basis of $M$ and let $C$ be
the matrix of $\phi$ with respect to this basis.  
By Fontaine's theory of $\phi$-modules we know that 
\[ M\otimes_{\mathcal{E}^r} \widetilde{\mathcal{E}} \cong
\oplus_{i=1}^d e_i \widetilde{\mathcal{E}}.  \]
The right hand side is the $(\phi,\nabla)$-module given by
 $\phi(e_i)=e_i$ and $\nabla(e_i)=0$.  
Let $A \in GL_d(\widetilde{\mathcal{E}})$ be the period matrix
satisfying $\bold{a}=A\bold{e}$, where $\bold{a}=(a_1,...,a_d)$ and
$\bold{e}=(e_1,...,e_d)$.  It is enough to show that 
$A$ has entries in $\widetilde{\mathcal{E}}^r$.  We have $\phi(\bold{a})=C\bold{a}$ and
$\phi(\bold{a})=\phi(A\bold{e})=\sigma(A)A^{-1}\bold{a}$.  This gives
$CA=\sigma(A)$.  Since $w_n(\sigma(A))=pw_n(A)$ we have
\begin{equation*}
pw_n(A) = w_n(CA) \\
\geq \min_{i+j\leq n} w_i(C) + w_j(A) \\
\geq w_n(C) + w_n(A).
\end{equation*}
Therefore $w_n(A)\geq \frac{w_n(C)}{p-1}$.  Since $C$ is contained in 
$M_{d\times d}(\tb{A}^r)$,
we see that the same is true of $A$.  Therefore the entries of
$A$ are contained in $\tb{A}^r\cap \widetilde{\mathcal{E}}$.

\end{proof}

\begin{proposition} \label{fully faithful}
 Let $M$ and $N$ be $(\phi,\nabla)$-modules over $\mathcal{E}^r$ and
 let $f_\mathcal{E}:M \otimes_{\mathcal{E}^r} \mathcal{E} \to N \otimes_{\mathcal{E}^r}\mathcal{E}$ be a morphism of
 $(\phi,\nabla)$-modules over $\mathcal{E}$.  Then $f_\mathcal{E}$ descends to a morphism
 $f_{\mathcal{E}^r}: M \to N$ defined over $\mathcal{E}^r$.

\end{proposition}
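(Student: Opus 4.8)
The statement is a full faithfulness (really: descent of morphisms) assertion for the base-change functor $\Mphiet{\mathcal{E}^r} \to \Mphiet{\mathcal{E}}$. I would prove it by using the Frobenius structure to show that a morphism over $\mathcal{E}$ between objects defined over $\mathcal{E}^r$ automatically has matrix entries in $\mathcal{E}^r$. The key observation is that $\mathcal{E}^r = \mathcal{E} \cap \tb{B}^r$ inside $\tb{B}$ by Corollary \ref{naive log vs witt log}, so it suffices to show the entries of the matrix of $f_\mathcal{E}$ lie in $\tb{B}^r$ after embedding everything via some $i_\sigma$; equivalently, it suffices to control the partial valuations $w_n$ of that matrix.

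\textbf{The steps.} First, fix bases $\mathbf{a}=(a_1,\dots,a_m)$ of $M$ and $\mathbf{b}=(b_1,\dots,b_\ell)$ of $N$ over $\mathcal{E}^r$, and let $C_M$, $C_N$ be the matrices of $\phi$ with respect to these bases; these lie in $M_{*\times *}(\tb{A}^r)$ after embedding via $i_\sigma$, since $M$ and $N$ are $(\phi,\nabla)$-modules over $\mathcal{E}^r$ and étale. Write the morphism $f_\mathcal{E}$ as an $\ell \times m$ matrix $P$ over $\mathcal{E}$ in these bases. The compatibility of $f_\mathcal{E}$ with Frobenius reads $C_N \cdot \sigma(P) = P \cdot C_M$, i.e. $\sigma(P) = C_N^{-1} P\, C_M$. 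Since $M$ and $N$ are étale, $C_M, C_N, C_M^{-1}, C_N^{-1}$ all lie in $M_{*\times *}(\tb{A}^r)$ — here I am using Lemma \ref{subrings} and the fact that an étale $\phi$-module over $\mathcal{E}^r$ has invertible Frobenius matrix over $\mathcal{O}_{\mathcal{E}^r}$, hence (possibly after scaling bases by powers of $p$, which does not affect the argument) over $\tb{A}^r$. Next, recall $w_n(\sigma(P)) = p\, w_n(P)$ because $\sigma$ corresponds to $\bold{Frob}$ under $i_\sigma$, which raises Teichmüller coordinates to the $p$-th power. Applying the multiplicative inequality for $w_n$ to $\sigma(P) = C_N^{-1} P\, C_M$ gives
\[
p\, w_n(P) \;=\; w_n\big(C_N^{-1} P\, C_M\big) \;\geq\; w_n(P) + w_n(C_N^{-1}) + w_n(C_M),
\]
and since $C_M, C_N^{-1} \in M_{*\times *}(\tb{A}^r)$ we have $w_n(C_M), w_n(C_N^{-1}) \geq -c\, p^{nr}$ for some constant $c>0$. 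Hence $(p-1) w_n(P) \geq -2c\, p^{nr}$, so $w_n(P) \geq -\tfrac{2c}{p-1} p^{nr}$, which shows $P \in M_{*\times *}(\tb{B}^r)$. Mirroring the argument in the proof of Proposition \ref{Trivializing log-decay diffeqs}, this is exactly the estimate needed. Finally, since the entries of $P$ lie in $\mathcal{E}$ (as $f_\mathcal{E}$ is defined over $\mathcal{E}$) and also in $\tb{B}^r$, Corollary \ref{naive log vs witt log} forces them into $\mathcal{E}^r$. Thus $P$ defines a morphism $f_{\mathcal{E}^r}: M \to N$ over $\mathcal{E}^r$, and its compatibility with $\nabla$ is inherited from that of $f_\mathcal{E}$ since $\nabla$ acts entrywise and $\mathcal{E}^r \subset \mathcal{E}$; $f_{\mathcal{E}^r} \otimes \mathcal{E} = f_\mathcal{E}$ by construction.

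\textbf{The main obstacle.} The substantive point is not the $w_n$-estimate itself — that is a verbatim repeat of the argument in Proposition \ref{Trivializing log-decay diffeqs} — but rather ensuring the setup is legitimate: namely, that one may choose bases of $M$ and $N$ for which the Frobenius matrices and their inverses genuinely lie in $\tb{A}^r$ (not merely $\mathcal{E}$), and that scaling the basis of $N$ by a power of $p$ to arrange $C_N^{-1} \in \tb{A}^r$ does not interfere with the hypothesis that $P$ has entries in $\mathcal{E}$ (it does not, since such scaling only multiplies $P$ by a power of $p$). One must also be slightly careful that $P$ need not be square, but the $w_n$-inequalities for rectangular matrices stated in Section \ref{section period rings} handle this without change. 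Once these bookkeeping points are addressed, the descent is immediate.
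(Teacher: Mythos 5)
Your proposal is correct and follows essentially the same route as the paper's proof: write the Frobenius compatibility as $\sigma(S) = B^{-1}SA$ with $A,B$ the Frobenius matrices of $M,N$ over $\mathcal{E}^r$, use $w_n(\sigma(S)) = p\,w_n(S)$ together with the multiplicative inequality for $w_n$ to deduce $w_n(S) \geq -\tfrac{2d}{p-1}p^{rn}$, and then invoke Corollary \ref{naive log vs witt log} to conclude $S$ has entries in $\mathcal{E} \cap \tb{B}^r = \mathcal{E}^r$. The bookkeeping points you flag (invertibility of the Frobenius matrices in $\tb{A}^r$ up to harmless $p$-power scaling, rectangular $S$) are handled implicitly in the paper and your treatment of them is fine.
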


\begin{proof}
 Let $e_1,...,e_m$ be a basis of $M$ and let $f_1,...,f_n$ be a basis of $N$.  Let
 $A$ (resp. $B$) be the matrix for the Frobenius of $M$ (resp. $N$) with respect
 to $e_1,...,e_m$ (resp. $f_1,...,f_n$).  Let $S \in M_{m \times n}(\mathcal{E})$ be the matrix of $f$
 with respect to these bases.  We have to show that the entries of $S$ lie in
 $\mathcal{E}^r$.  The compatibility of $f$ with Frobenius gives
 \[ B^{-1}SA = S^\sigma,\]
 where $S^\sigma$ denotes $\sigma$ applied to each entry of $S$.  Let $d$ be large enough
 so that \[w_n(B^{-1}),w_n(A) \geq -p^{rn}d.\]  Then we have
 \begin{eqnarray*}
  w_n(S^\sigma)  & \geq & \min_{i+j+k=n} \{ w_i(B^{-1}) + w_j(S) + w_k(A) \} \\
  & \geq & -2p^{rn}d + w_n(S).
 \end{eqnarray*}
Since $w_n(S^\sigma)= pw_n(S)$ we see that $w_n(S) \geq \frac{-2p^{rn}d}{p-1}$, which
means that $S$ has entries in $\tb{B}^r$.  Since the entries of $S$ are also in $\mathcal{E}$ we know from
Lemma \ref{naive log vs witt log} that $S$ has entries in $\mathcal{E}^r$.

\end{proof}

\begin{remark} The proof of Proposition \ref{fully faithful} can be adapted to show that
 the base change functor from $\Mphiet{\mathcal{E}^\dagger}$ to $\Mphiet{\mathcal{E}}$ is
 fully faithful.  This was first proved by Tsuzuki (see \cite{Tsuzuki3}).
\end{remark}

\section{Global $r$-log-decay $F$-isocrystals and genus growth}
\label{section global results}

To define overconvergent $F$-isocrystals, Berthelot defines a sheaf of rings $j^\dagger \mathcal{O}_{\mathcal{X}^{an}}$
on $\mathcal{X}^{an}$ (\cite[Chapter 2]{Berthelot1}).  The sheaf $j^\dagger \mathcal{O}_{\mathcal{X}^{an}}$ agrees with the structure sheaf
$\mathcal{O}_{\mathcal{Y}^{an}}$ when restricted to $\mathcal{Y}^{an}$.  On any strict neighborhood 
$\mathcal{Y}^{an} \subset V$ the sections $\Gamma(V,j^\dagger \mathcal{O}_{\mathcal{X}^{an}})$ consist
of functions on $\mathcal{Y}^{an}$ that overconverge into each disc $]x[ \subset ]D[$.  
Additionally, for any open neighborhood $V$ contained in $]D[$ there are no sections
of $j^\dagger \mathcal{O}_{\mathcal{X}^{an}}$.  An $F$-isocrystal
on $\mathcal{O}_{\mathcal{Y}^{an}}$ is then overconvergent if it extends to an $F$-isocrystal on 
$j^\dagger \mathcal{O}_{\mathcal{X}^{an}}$.  Following Crew (\cite[Section 4]{Crew3}) one may \emph{localize} at each $x \in D$
to obtain an $F$-isocrystal defined over $\mathcal{E}^{\dagger}$.
In this section we will define a sheaf of rings $\mathcal{O}_{\mathcal{Y}^{an}}^r$ on $\mathcal{X}^{an}$ that will capture
the property of $r$-log-decay at each $x \in D$.  We will prove that an $F$-isocrystal over $\mathcal{O}_{\mathcal{Y}^{an}}$
extends to an $F$-isocrystal over $\mathcal{O}_{\mathcal{Y}^{an}}^r$ if and only if at each point $x \in D$ the localized
$F$-isocrystal can be defined over $\mathcal{E}^r$.  Finally, we will use the Riemann-Hurwitz-Hasse formula
to relate the $r$-log-decay property to genus growth bounds for pro-$p$ towers of curves over $X$.  This is all summarized
in Theorem \ref{Main global theorem}.

\subsection{The $r$-log-decay sheaves $\mathcal{O}_{\mathcal{Y}^{an}}^r$} 
\label{log decay sheaf description}
Let $x \in D$ and let $t_x$ be a rational function on $X$ with
a simple zero at $x$ (i.e. $t_x$ is a local parameter of $x$).
Then $t_x$ lifts to a function $T_x$, which is defined on an open
subset of $\mathcal{X}$, and $\hat{\mathcal{O}}_{\mathcal{X},x}$ is isomorphic to $\mathcal{O}_K[[T_x]]$.  
Let $\underline{\mathcal{E}_{T_x}}|_{]x[}$ be the constant sheaf of the ring
$\mathcal{E}_{T_x}$ on the tube $]x[$ over $x$.
Consider the immersions:
\begin{eqnarray*}
 i_x &  :  & ]x[  ~~\hookrightarrow  \mathcal{X}^{an} \\
 j^{an} &  :  & \mathcal{Y}^{an}  \hookrightarrow  \mathcal{X}^{an}.
\end{eqnarray*}
We will describe a map \[ f_x: j^{an}_* \mathcal{O}_{\mathcal{Y}^{an}} \to (i_x)_*(\underline{\mathcal{E}_{T_x}}|_{]x[}). \]
Let $\mathcal{V}^{an}$ be a connected affinoid subspace of $\mathcal{X}^{an}$.  If $\mathcal{V}^{an}$ is
contained entirely in $]x[$ (resp. $\mathcal{Y}^{an}$) then the target (resp. the source)
is empty and there is nothing to describe.  When $\mathcal{V}^{an}$ has a nontrivial intersection with 
both $\mathcal{V}^{an}$ and $]x[$, the map $f_x$ will roughly send a section of 
$\Gamma(\mathcal{V}^{an},j^{an}_* \mathcal{O}_{\mathcal{Y}^{an}})=\Gamma(\mathcal{V}^{an}\cap \mathcal{Y}^{an}, \mathcal{O}_{\mathcal{Y}^{an}})$
to its $T_x$-adic expansions.  Let us make this more precise.  
By Lemma 4.3 in \cite{Crew3}, there exists $r$ between $0$ and $1$
such that $\mathcal{V}^{an}$ contains the annulus defined by $r\leq |T_x|_p < 1$.  Let
$\mathcal{R}_{[r,1)}$ denote the ring of analytic functions on 
$r\leq |T_x|_p < 1$.  In particular we have a restriction map 
$\Gamma(\mathcal{V}^{an}, \mathcal{O}_{\mathcal{X}^{an}}) \to \mathcal{R}_{[r,1)}$.
The ring $\mathcal{R}_{[r,1)}$ consists of Laurent series 
\[\sum_{n=-\infty}^{\infty} a_nT_x^n \]
with coefficients in $K$ such that $\sup |a_n|_pr_0^n  <\infty$ for all $r\leq r_0 <1$.  
The subring $\mathcal{R}^b_{[r,1)} \subset \mathcal{R}_{[r,1)}$
of bounded analytic functions consists of those Laurent series in 
$\mathcal{R}_{[r,1)}$ where the $a_n$ are bounded.  By the maximum modulus
principle the functions in $\Gamma(\mathcal{V}^{an}, \mathcal{O}_{\mathcal{X}^{an}})$
are bounded, which means the image of the restriction map lies in $\mathcal{R}^b_{[r,1)}$.
Putting this together gives

\begin{center}
\begin{tikzcd}

\Gamma(\mathcal{V}^{an}, \mathcal{O}_{\mathcal{X}^{an}}) \arrow{r} \arrow{d}
&
\Gamma(\mathcal{V}^{an},j^{an}_* \mathcal{O}_{\mathcal{Y}^{an}})=
\Gamma(\mathcal{V}^{an}, \mathcal{O}_{\mathcal{X}^{an}})\ang{T_x^{-1}}
\arrow{d} \\
\mathcal{R}^b_{[r,1)} \arrow{r} & \mathcal{E}_{T_x}.

\end{tikzcd}
\end{center}

This is essentially the localization process that Crew describes in \cite[Section 4]{Crew3},
which allows us to pass from global $F$-isocrystals on $\sheafrig{Y}$ to local $F$-isocrystals
on $\mathcal{E}$.
We would like to \emph{pick out} the sections of 
$\Gamma(\mathcal{V}^{an},j^{an}_* \mathcal{O}_{\mathcal{Y}^{an}})$
that are sent to $\mathcal{E}_{T_x}^r$ by $f_x$.  To do this we utilize 
the constant sheaf $\underline{\mathcal{E}_{T_x}^r}|_{]x[}$ of the
ring $\mathcal{E}_{T_x}^r$ on $]x[$.  Consider the pullback 
$\mathcal{O}_{\mathcal{Y}^{an},x}^r$ that makes the following diagram 
Cartesian

\begin{center}
\begin{tikzcd}
 \mathcal{O}_{\mathcal{Y}^{an},x}^r \arrow{r} \arrow{d} & 
 j^{an}_* \mathcal{O}_{\mathcal{Y}^{an}} \arrow{d}{f_x} \\
  (i_x)_*(\underline{\mathcal{E}_{T_x}^r}|_{]x[}) \arrow{r} & 
  (i_x)_*(\underline{\mathcal{E}_{T_x}}|_{]x[})
 \end{tikzcd}
  
 \end{center}
  Taking the sections of this diagram on $\mathcal{V}^{an}$ gives
  the Cartesian diagram
\begin{center}
\begin{tikzcd}

\Gamma(\mathcal{V}^{an}, \mathcal{O}^r_{\mathcal{Y}^{an},x}) \arrow{r} \arrow{d}
&
\Gamma(\mathcal{V}^{an},j^{an}_* \mathcal{O}_{\mathcal{Y}^{an}})
\arrow{d} \\
\mathcal{E}_{T_x}^r \arrow{r} & \mathcal{E}_{T_x}.

\end{tikzcd}
\end{center}
In particular, we see that $\Gamma(\mathcal{V}^{an}, \mathcal{O}^r_{\mathcal{Y}^{an},x})$
consists of the functions on $\mathcal{V}^{an} \cap \mathcal{Y}^{an}$
that have $r$-log-decay around $x$.  We remark that by Lemma \ref{log extensions} the construction of 
$\mathcal{O}^r_{\mathcal{Y}^{an},x}$ does depend on the parameter $T_x$.

Finally, we define $\mathcal{O}_{\mathcal{Y}^{an}}^r$ by following this construction
simultaneously for each point $x \in D$.  Consider the sheaves on $\mathcal{X}^{an}$
\begin{eqnarray*}
\underline{\mathcal{E}_D} &:=& \bigoplus_{x\in D} (i_x)_*(\underline{\mathcal{E}_{T_x}}|_{]x[})\\
\underline{\mathcal{E}^r_D} &:=& \bigoplus_{x\in D} (i_x)_*(\underline{\mathcal{E}^r_{T_x}}|_{]x[})
\end{eqnarray*}
and the map of sheaves 
\begin{eqnarray*}
f_D &:=& \bigoplus_{x\in D} f_x:  j^{an}_* \mathcal{O}_{\mathcal{Y}^{an}} \to \underline{\mathcal{E}_D}.
\end{eqnarray*}
Then $\mathcal{O}_{\mathcal{Y}^{an}}^r$ is taken to be the sheaf that completes
the Cartesian diagram:

\begin{center}
\begin{tikzcd}

 \mathcal{O}_{\mathcal{Y}^{an}}^r \arrow{r} \arrow{d}
&
j^{an}_* \mathcal{O}_{\mathcal{Y}^{an}}
\arrow{d}{f_D} \\
\underline{\mathcal{E}^r_D} \arrow{r} & \underline{\mathcal{E}_D}.

\end{tikzcd}
\end{center}

\begin{remark} If we were to replace $\mathcal{E}^r_{T_x}$ with $\mathcal{E}^\dagger_{T_x}$
we would recover Berthelot's overconvergent sheaf.  However, Berthelot's construction avoids
choosing any local parameters and extends to more general rigid varieties.
It would be interesting to have a less ad-hoc construction of $\mathcal{O}_{\mathcal{Y}^{an}}^r$
that does not involve choosing any parameters and that works for higher dimensional varieties.

\end{remark}
\subsection{Global results and genus growth}
To state our main theorem we must explain how a $p$-adic representation of $\pi_1(Y)$ gives
rise to a pro-$p$ tower of curves.  
Let $\rho: \pi_1(Y) \to GL_d(\Z_p)$ be a continuous $p$-adic representation. 
We define
$G=\im(\rho)$ and let \[G=G(0)\supset G(1) \supset G(2) \supset...\] be
the $p$-adic Lie filtration given by $G(n) = \ker(G \to GL_d(\Z_p/p^n\Z_p))$.
This gives rise to a pro-$p$ tower of curves 
\[X=X_0 \leftarrow X_1 \leftarrow X_2 \leftarrow...\]
that is \'etale outside of $D$.  
We define $g_n$ to be the genus of $X_n$ and we define
$d_n$ to be the degree of $X_n$ over $X$ (i.e. $d_n=|G/G(n)|$).  Our global
interpretation of the $r$-log-decay property has to do with the asymptotic
growth of $\frac{g_n}{d_n}$.

\begin{theorem} \label{Main global theorem} The following categories are equivalent

\begin{enumerate}
\item The category of continuous $p$-adic representations  
\[\rho:\pi_1(Y) \to GL_n(\Q_p) \]
such for each $x \in D$ with decomposition group $G_{F_x}$ the restriction
$\rho|_{G_{F_x}}$ lies in $\Rep^r(G_{F_x})$.

\item The category of continuous $p$-adic representations  
\[\rho:\pi_1(Y) \to GL_n(\Q_p) \]
such that any pro-$p$ tower of curves obtained from a $\pi_1(Y)$-stable lattice as above
has the property that $\frac{g_n}{d_n}$ grows $O(p^{n(r+1)})$.

\item The category of unit-root convergent $F$-isocrystals $\mathcal{M}$ on $\mathcal{Y}^{an}$
such that for each $x \in D$ the localized $F$-isocrystal $M_x$ over $\mathcal{E}_{T_x}$
descends to an $F$-isocrystal over $\mathcal{E}_{T_x}^r$.  The morphisms are just morphisms
of $F$-isocrystals.

\item The category of unit-root convergent $F$-isocrystals $M$ on $\mathcal{Y}^{an}$
that extend to $\mathcal{O}_{\mathcal{Y}^{an}}^r$.
\end{enumerate}
 
\end{theorem}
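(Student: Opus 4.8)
The strategy is to prove separately the equivalences $(1)\Leftrightarrow(3)$, $(3)\Leftrightarrow(4)$ and $(1)\Leftrightarrow(2)$. For $(1)\Leftrightarrow(3)$: by the étale convergent $F$-isocrystal $\leftrightarrow$ representation correspondence (\cite{Katz1}) a unit-root convergent $F$-isocrystal $\mathcal{M}$ on $\mathcal{Y}^{an}$ is the same datum as a continuous $\rho\colon\pi_1(Y)\to GL_n(\Q_p)$, and Crew's localization at $x\in D$ (\cite[Section 4]{Crew3}) produces a $(\phi,\nabla)$-module $M_x$ over $\mathcal{E}_{T_x}$ that is canonically $D_{\mathcal{E}_{T_x}}(\rho|_{G_{F_x}})$ for the decomposition group $G_{F_x}\hookrightarrow\pi_1(Y)$. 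By Theorem \ref{main local result} and the stated compatibility of $D^r$ with $D$, the module $M_x$ descends to $\mathcal{E}_{T_x}^r$ exactly when $\rho|_{G_{F_x}}\in\Rep^r(G_{F_x})$, and the same compatibility matches morphisms, so $(1)$ and $(3)$ are literally the same full subcategory. One checks that the descent condition is insensitive to the Frobenius lift chosen in the localization, because $\Rep^r(G_{F_x})$ is.

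For $(3)\Leftrightarrow(4)$: the sheaf $\mathcal{O}_{\mathcal{Y}^{an}}^r$ was built in Section \ref{log decay sheaf description} as the pullback in a Cartesian square of sheaves of rings on $\mathcal{X}^{an}$ whose right-hand map $f_D\colon j^{an}_*\mathcal{O}_{\mathcal{Y}^{an}}\to\underline{\mathcal{E}_D}$ is precisely Crew's tuple of localizations $M\mapsto(M_x)_{x\in D}$. Hence a locally free finite-rank $\mathcal{O}_{\mathcal{Y}^{an}}^r$-module amounts to a locally free $\mathcal{O}_{\mathcal{Y}^{an}}$-module $\mathcal{M}$ together with, for each $x$, a locally free $\mathcal{E}_{T_x}^r$-module $M_x^r$ and an isomorphism $M_x^r\otimes_{\mathcal{E}_{T_x}^r}\mathcal{E}_{T_x}\cong M_x$: that is, extending $\mathcal{M}$ across $\mathcal{O}_{\mathcal{Y}^{an}}^r$ is the same as descending every $M_x$ to $\mathcal{E}_{T_x}^r$, which is $(3)$. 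A descent of $M_x$, if it exists, is unique by faithfulness of $\mathcal{E}_{T_x}^r\hookrightarrow\mathcal{E}_{T_x}$ (Corollary \ref{naive log vs witt log} together with the argument of Proposition \ref{fully faithful}); taking the Frobenius lift to preserve each $\mathcal{E}_{T_x}^r$ (e.g. $T_x\mapsto T_x^q$), uniqueness forces the Frobenius and the connection of $\mathcal{M}$ to preserve the extension, and morphisms match.

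For $(1)\Leftrightarrow(2)$: unwinding $(1)$, the condition $\rho|_{G_{F_x}}\in\Rep^r(G_{F_x})$, i.e. $G^{p^{nr}c}\subset G(n)$ for some $c$, is equivalent to the ramification filtration of $H(n)$ vanishing above $p^{nr}c$, hence to $\mu_{n,x}=O(p^{nr})$ where $\mu_{n,x}$ is the top upper-numbering break of the local tower at $x$. Apply the Riemann--Hurwitz--Hasse formula to $X_n\to X$, which is \'etale away from $D$: $2g_n-2=d_n(2g_0-2)+\sum_{x\in D}\deg(x)\,\tfrac{d_n}{e_x(n)}\,\delta_{F_{n,x}/F_x}$, with $e_x(n)$ the ramification index at $x$. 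By Lemma \ref{the different and the breaks} for the wild totally ramified part, together with the estimate $\lambda_{L/F}\le\mu_{L/F}\,|G_{L/F}^{0}|/|G_{L/F}^{\mu_{L/F}}|$ for the top lower break — equivalently Corollary \ref{upper break and different bound} for the lower bound — one gets $(1-\tfrac1p)\mu_{n,x}\le\tfrac{\delta_{F_{n,x}/F_x}}{e_x(n)}\le\mu_{n,x}$. Feeding this into the genus formula shows the growth of $\tfrac{g_n}{d_n}$ is controlled, up to bounded factors, by $\max_{x\in D}\mu_{n,x}$, so the stated growth bound for $\tfrac{g_n}{d_n}$ holds precisely when every $\mu_{n,x}=O(p^{nr})$, i.e. in case $(1)$; this is visibly independent of the chosen $\pi_1(Y)$-stable lattice, as is $\Rep^r$.

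The main obstacle I expect is $(3)\Leftrightarrow(4)$: one must verify that modules genuinely glue along the Cartesian square defining $\mathcal{O}_{\mathcal{Y}^{an}}^r$, i.e. that the functor from locally free finite-rank $\mathcal{O}_{\mathcal{Y}^{an}}^r$-modules to the fibre product of the module categories over $\mathcal{O}_{\mathcal{Y}^{an}}$, $\underline{\mathcal{E}^r_D}$ and $\underline{\mathcal{E}_D}$ is an equivalence on the objects at hand, and that it is compatible with Frobenius and connection; carrying this out in Berthelot's rigid-analytic setting — with the parameters $T_x$ fixed by hand and with strict neighbourhoods of $\mathcal{Y}^{an}$ to keep track of — is where the real work lies. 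The other two equivalences are, by contrast, formal once Theorem \ref{main local result} and the ramification lemmas of Section \ref{section ramification} are in place.
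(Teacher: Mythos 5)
Your decomposition into the three equivalences is the same as the paper's, and two of them are carried out correctly: $(1)\Leftrightarrow(3)$ by applying Theorem \ref{main local result} at each $x\in D$ via Crew's localization, and $(1)\Leftrightarrow(2)$ by the Riemann--Hurwitz--Hasse formula combined with Lemma \ref{the different and the breaks} for the upper bound on $\delta_{x_n}/[F_{x_n}:F_x]$ and Corollary \ref{upper break and different bound} (your two-sided estimate between $\mu_{n,x}$ and the normalized different is exactly what the paper extracts, including the $|G^{\mu}|\le p^{d^2}$ bound needed for the converse).

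The genuine gap is in $(3)\Rightarrow(4)$, precisely where you flagged it. You assert that, because $\mathcal{O}^r_{\mathcal{Y}^{an}}$ is defined by a Cartesian square of sheaves of rings, a locally free $\mathcal{O}^r_{\mathcal{Y}^{an}}$-module ``amounts to'' a locally free $\mathcal{O}_{\mathcal{Y}^{an}}$-module together with a descent of each localization $M_x$ to $\mathcal{E}^r_{T_x}$. That is not formal: modules over a fibre product of rings do not in general form the fibre product of the module categories (Milnor-type patching requires one of the two maps to $\mathcal{E}_{T_x}$ to be surjective, and neither the inclusion $\mathcal{E}^r_{T_x}\hookrightarrow\mathcal{E}_{T_x}$ nor the localization map from global sections is). Concretely, the abstract isomorphism $M_x\otimes_{R_x}\mathcal{E}_{T_x}\cong M_x^r\otimes_{\mathcal{E}^r_{T_x}}\mathcal{E}_{T_x}$ only gives a transition matrix $L\in GL_n(\mathcal{E}_{T_x})$ between a global basis and an $r$-log-decay basis, and one must show $L$ can be corrected by a matrix with $r$-log-decay entries so that the product lies in $K\ang{T_x,T_x^{-1}}$, hence in the global ring $R_x$; this is Lemma \ref{Transition matrix converges on Gm}, proved by a successive $p$-adic approximation that clears the positive-degree tails modulo each power of $p$. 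The surrounding construction --- the rational functions $\overline{f_x}$ with a simple zero at $x$, poles on $D-\{x\}$ and pairwise disjoint zero sets, the resulting cover of $\mathcal{X}^{an}$ by the tubes $\mathcal{V}^{an}_x$ with pairwise intersections inside $\mathcal{Y}^{an}$, and the reduction to the case where $\mathcal{M}$ is free on a small punctured formal neighborhood of $x$ --- is what makes the subsequent gluing of the $\mathcal{M}^r_x$ trivial. Without this approximation step and covering argument, the essential surjectivity of $(4)\to(3)$ is unproved, so as written your proposal is incomplete at exactly the point you identified as ``where the real work lies.''
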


\begin{proof} The equivalence of (1) and (3) follows from applying
Theorem \ref{main local result} to each point $x \in D$.  To prove the equivalence of
(1)
and (2) we will use the Riemann-Hurwitz-Hasse formula.  Fix a $\Z_p$-lattice
that is invariant under $\pi_1(Y)$ and consider the setup described 
above with the same notation.
For $x \in D$
let $x_n \in X_n$ be a compatible system of points over $x$.  We define
$F_{x_n}$ to be the fraction field of $\hat{\mathcal{O}}_{X_n, x_n}$, which
gives a tower of local fields \[ F_{x_0}\subset F_{x_1} \subset F_{x_2} \subset ...\]
whose union we denote $F_{x_\infty}$.
The Riemann-Hurtwitz-Hasse formula (see \cite[Theorem 6.1]{Oort}) states
\[ g_n - 2 = d_n (g_0-2) + \sum_{x' \in X_n} \delta_{x'},\]
where $\delta_{x'}$ denotes the different of the map $X_n \to X$
at $x'$.  There are $\frac{d_n}{|F_{x_n}:F_{x}|}$ points of $X_n$ above $x \in D$
and each point gives the same different.  This gives
\begin{equation*} \label{genus different relation}
\frac{g_n}{d_n} = g_0 - 2 + 
\sum_{x \in D} \frac{\delta_{x_n}}{|F_{x_n}:F_{x}|} + \frac{2}{d_n}. \tag{S}
\end{equation*}
If we have the local ramification bounds in the hypothesis of (1) 
we know from Lemma \ref{the different and the breaks} that
$\frac{\delta_{x_n}}{|F_{x_n}:F_{x}|}$ is $O(p^{rn})$ for each
$x \in D$.  It follows from (S) that $\frac{g_n}{d_n}$ is $O(p^{nr})$.  This proves
that (1) implies (2).

Conversely, assume that $\frac{g_n}{d_n}$ is $O(p^{rn})$.  For $x \in D$ we know by (S) 
that $\frac{\delta_{x_n}}{|F_{x_n}:F_{x}|}$ grows $O(p^{rn})$.  Let $c>0$ such that
$\frac{\delta_{x_n}}{|F_{x_n}:F_{x}|}\leq cp^{rn}$ for all $n$.  Denote by $\mu_{x_n}$
the largest upper numbering ramification break of $F_{x_n}$ over $F_{x}$.
Equivalently we may define $\mu_{x_n}$ as the smallest number such that
$G_{F_{x_\infty}/F_{x}}^{\mu_{x_n}+\epsilon} \subset G_{F_{x_\infty}/F_{x_n}}$ for
any $\epsilon>0$.
We need to prove that $\mu_{x_n}$ is $O(p^{rn})$.  By 
Corollary \ref{upper break and different bound} we know that

\begin{eqnarray*}
cp^{rn} &\geq&  \frac{\delta_{x_n}}{|F_{x_n}:F_{x}|} \\
& \geq  & \frac{\mu_{x_n}}{|G_{F_{x_n}/F_{x}}^{\mu_{x_n}}|}.
\end{eqnarray*}
View $G_{F_{x_n}/F_{x}}$ as a subgroup of $GL_d(\Z_p/p^{n}\Z_p)$.  The elements
of $G_{F_{x_n}/F_{x}}^{\mu_{x_n}}$ all have order $p$, so that 
$g \in G_{F_{x_n}/F_{x}}^{\mu_{x_n}}$ corresponds to a matrix
that reduces to the identity modulo $p^{n-1}$.  This gives us
the bound \[ |G_{F_{x_n}/F_{x}}^{\mu_{x_n}}|\leq p^{d^2},\] from which we see
\[cp^{d^2}p^{rn} \geq \mu_{x_n}.\]

It remains to show the equivalence between (3) and (4)..Let $\mathcal{M}^r$ be an $F$-isocrystal of $\sheafrig{Y}^r$-modules.  Then
we obtain an $F$-isocrystal $\mathcal{M}$ of $\sheafrig{Y}$-modules by restricting
$\mathcal{M}^r$ to $\mathcal{Y}^{an}$.  By our construction of $\sheafrig{Y}^r$ we 
know that $\mathcal{M}$ will have the desired local properties.  This gives a
functor from (4) to (3).  To see that this functor is fully faithful,
we may follow Crew's argument in \cite[4.6-4.10]{Crew3} and then
apply Proposition \ref{fully faithful} (note that this is the same approach that
Tsuzuki takes to proving the fully faithfullness of overconvergent
$F$-isocrystals in \cite[Theorem 5.1.1]{Tsuzuki3}).  To prove
that this functor is essentially surjective is the bulk of the work.  
Informally, what we need to
prove that if a global $F$-isocrystal locally descends to an $F$-isocrystal
with $r$-log-decay, then we can descend this $F$-isocrystal to have
$r$-log-decay globally.  
Let $\mathcal{M}$ be an $F$-isocrystal on $\mathcal{Y}^{an}$ satisfying the conditions
of (3).  We will carefully choose a covering of $\mathcal{X}^{an}$ by tubes
and prove that $M$ globally has $r$-log-decay when restricted to each tube
in our covering.  These tubes will be chosen so that gluing the $r$-log-decay
$F$-isocrystals is trivial.

For $x \in D$ we may find a rational function $\overline{f_x}$ on $X$ such that
$\overline{f_x}$ has a simple zero at $x$ and $\overline{f_x}$ has a pole at each 
$x' \in D-\{x\}$.  By allowing these poles to have high enough
orders we may use Riemann-Roch to ensure that $\overline{f_x}$ and $\overline{f_{x'}}$ 
have no common zeros whenever $x\neq x'$.  Let $V_x$ be the largest
Zariski open subset of $X$ on which $\overline{f_x}$ is defined and so that
the only zero of $\overline{f_x}$ on $V_x$ is $x$ (i.e. we take domain of definition
of $\overline{f_x}$ and then remove all the zeros of $\overline{f_x}$ except for $x$).  Since
the $\overline{f_x}$ have no common zeros we have 
\[X=\bigcup_{x\in D} V_x.\]  Also, by the way we chose the poles of $\overline{f_x}$
we have \[V_x\bigcap V_{x'} \subset Y\] whenever $x\neq x'$.

Let $\mathcal{V}^{an}_x$ be $]V_x[$ and let $\mathcal{W}^{an}_x$ be
$\mathcal{V}^{an}_x-]x[$.  Note that $\mathcal{W}^{an}_x\subset \mathcal{Y}^{an}$
because $V_x - \{x\}$ is contained in $Y$.
We will let $\sheafrig[x]{V}^r$ be the restriction
of $\sheafrig{X}^r$ to $\mathcal{V}^{an}_x$.
We will prove that the $F$-isocrystal
$\mathcal{M}|_{\mathcal{W}^{an}_x}$ extends to an $F$-isocrystal 
$\mathcal{M}_x^r$ of $\sheafrig[x]{V}^r$-modules.  That is, when we restrict $\mathcal{M}_x^r$ to
$\mathcal{W}^{an}_x$ we obtain an $F$-isocrystal of $\sheafrig[x]{W}$-modules 
isomorphic to $\mathcal{M}|_{\mathcal{W}^{an}_x}$ (recall that since $\mathcal{W}^{an}_x \subset
\mathcal{Y}^{an}$ the restriction of $\sheafrig[x]{V}^r$ to $\mathcal{W}^{an}_x$ is
$\sheafrig[x]{W}$).  We immediately see that $\mathcal{M}_x^r$ and $\mathcal{M}_{x'}^r$ are isomorphic when restricted
to $\mathcal{W}_x^{an}\cap \mathcal{W}_{x'}^{an}$, which means we can patch the $\mathcal{M}_x^r$
together an obtain a sheaf $\mathcal{M}^r$ of $\sheafrig{Y}^r$-modules that restricts to
$\mathcal{M}$ on $\mathcal{Y}^{an}$.

Let $\mathcal{V}_x$ be the formal open subscheme of
$\mathcal{X}$ corresponding to 
$V_x \subset X$ and let $\mathcal{W}_x \subset \mathcal{V}_x$ be the formal open subscheme
corresponding to $V_x - \{x\}$.  By \cite[Lemma 2.5.1]{Crew3} we may find a 
small enough affine formal neighborhood $\mathcal{U}_x=\Spf(A_x)$ of $x$ contained in
$V_x$ such that there is a free coherent sheaf on 
$\Spf(A_x)-\{x\} = \Spf(A_x\ang{f_x^{-1}})$ whose rigid analytification is 
$\mathcal{M}$ restricted to $\Sp(A_x\ang{f_x^{-1}}\otimes \Q)=\mathcal{U}_x^{an}-]x[$.  In 
particular, if we let $\mathcal{T}_x=\Spf(A_x)-\{x\}$ then $\mathcal{M}|_{\mathcal{T}_x^{an}}$ is free.  
Note that
$\mathcal{U}_x^{an}$ is equal to the tube $]U_x[$ of an open set $U_x$ containing
$x$, which means that $\mathcal{V}_x$ is covered by $\mathcal{U}_x^{an}$
and $\mathcal{W}_x^{an}$.  We will prove that $\mathcal{M}|_{\mathcal{T}_x^{an}}$ extends
to a sheaf $\mathcal{M}_{\mathcal{U}_x}^r$ of $\sheafrig[x]{U}^r$-modules, 
utilizing the fact that $\mathcal{M}|_{\mathcal{T}_x^{an}}$ is free.  Then 
$\mathcal{M}_{\mathcal{U}_x^{an}}^r$ and $\mathcal{M}|_{\mathcal{W}_x^{an}}$ are
isomorphic on $\mathcal{T}_x \cap \mathcal{W}_x$ and we may glue them together
to get a sheaf of $\sheafrig[x]{V}^r$-modules $\mathcal{M}_x^r$ that extends $\mathcal{M}|_{\mathcal{W}_x^{an}}$.

We are now reduced to the case where $\mathcal{M}|_{\mathcal{T}_x^{an}}$ is free.  
In particular, if we let $R_x$ be $\Gamma(\mathcal{T}_x^{an}, \sheafrig[x]{T})$ then $\mathcal{M}|_{\mathcal{T}_x^{an}}$ corresponds to a free $R_x$-module
$M_x$.
Let $T_x$ be a lifting of
$\overline{f_x}$ in $\Gamma(\mathcal{W}_x, \sheafformal[x]{W})$.  The
function $T_x$ then defines a map $\mathcal{W}_x^{an}$ to $\mathbb{G}_m^{an}$,
where $\mathbb{G}_m^{an}$ denotes the rigid analytic multiplicative group
$\Sp(K\ang{T_x,T_x^{-1}})$.  Since $\mathcal{T}_x^{an}$ is contained in 
$\mathcal{W}_x^{an}$ we have a commutative diagram: 

\begin{center}
\begin{tikzcd}

K\ang{T_x,T_x^{-1}} \arrow{r} \arrow{d} & \mathcal{E}_{T_x} \arrow{d} \\
R_x \arrow{r} & \mathcal{E}_{T_x},

\end{tikzcd}
\end{center}

where the top horizontal map is just the inclusion and the bottom horizontal map
is the "expand in terms of $T_x$" map described in \ref{log decay sheaf description}.  
We will show that there exists an $F$-isocrystal defined over $R_x\cap \mathcal{E}_{T_x}^r$
that becomes isomorphic to $M_x$ after base-changing to $R_x$.

Our condition in (3) means that there is an $F$-isocrystal $M_x^r$ defined over
$\mathcal{E}_{T_x}^r$ such that 
\[ M_x \otimes_{R_x}\mathcal{E}_{T_x} \cong 
M_x^r \otimes_{\mathcal{E}_{T_x}^r} \mathcal{E}_{T_x}.\]
Let $\mathbf{e}=(e_1,...,e_n)$ be a basis of $M_x$ and let $\mathbf{f}=(f_1,...,f_n)$
be a basis of $M_x^r$.  Consider the matrix $L\in M_{n\times n}(\mathcal{E}_{T_x})$
satisfying $L\mathbf{e}=\mathbf{f}$.  
By Lemma \ref{Transition matrix converges on Gm} there exists 
$B \in M_{n\times n}(\mathcal{E}_{T_x}^r)$ such that $BL$ has entries in
$K\ang{T_x,T_x^{-1}}$.  In particular, we can view $BL$ as having entries
in $R_x$.  Since the Frobenius and connection matrices 
in terms of the basis $\mathbf{e}$ are
defined over $R_x$, the same is true for the basis $BL\mathbf{e}=\mathbf{g}$.
Similarly the Frobenius and connection matrices in terms of $\mathbf{f}$ are
defined over $\mathcal{E}_{T_x}^r$, the same is true for $B\mathbf{f}=\mathbf{g}$.
It follows that the $F$-isocrystal structure in terms of the basis $\mathbf{g}$
is defined over $\mathcal{E}_{T_x}^r\cap R_x$.  The two bases $\mathbf{e}$ and
$\mathbf{g}$ are related by a matrix in $R_x$, so we that the corresponding
$F$-isocrystals are isomorphic over $R_x$.

\end{proof}

\begin{lemma} \label{Transition matrix converges on Gm}
Let $L \in GL_n(\mathcal{E})$.  There exists $B \in GL_n(\mathcal{O}_K[[T]][T^{-1},p^{-1}])$
such that $BL$ has entries in $K\ang{T,T^{-1}}$.
\end{lemma}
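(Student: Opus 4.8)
The plan is to factor $L$ through two intermediate rings. Write $R = \mathcal{O}_K[[T]][T^{-1},p^{-1}]$ (the ring in the statement), $R^\circ = \mathcal{O}_K[[T]][T^{-1}]$, $A = K\ang{T,T^{-1}}$ and $A^\circ = \mathcal{O}_K\ang{T,T^{-1}}$, and let $\mathcal{O}_{\mathcal{E}}$ be the valuation ring of $\mathcal{E}$. Recall that $\mathcal{O}_{\mathcal{E}}$ is the $p$-adic completion of $R^\circ$, so $\mathcal{E} = \mathcal{O}_{\mathcal{E}}[p^{-1}]$, $R = R^\circ[p^{-1}]$, and $\mathcal{O}_{\mathcal{E}}$ is a complete discrete valuation ring with uniformizer $p$ and residue field $k((T))$. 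I would prove two factorizations:
\begin{itemize}
\item[(I)] $GL_n(\mathcal{E}) = GL_n(R)\cdot GL_n(\mathcal{O}_{\mathcal{E}})$;
\item[(II)] $GL_n(\mathcal{O}_{\mathcal{E}}) = GL_n(R^\circ)\cdot GL_n(A^\circ)$.
\end{itemize}
Granting these, given $L \in GL_n(\mathcal{E})$ one uses (I) to write $L = B_0 M$ with $B_0 \in GL_n(R)$, $M \in GL_n(\mathcal{O}_{\mathcal{E}})$, then (II) to write $M = B_1 C$ with $B_1 \in GL_n(R^\circ) \subset GL_n(R)$ and $C \in GL_n(A^\circ)$; then $B := (B_0 B_1)^{-1} \in GL_n(R)$ satisfies $BL = C$, whose entries lie in $A^\circ \subset K\ang{T,T^{-1}}$, as desired.

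For (I) I would argue by a finite reduction using only left multiplication by elements of $GL_n(R)$, the operations being scaling individual rows by powers of $p$ (units of $R$) and one ``clear a mod-$p$ relation'' step. After scaling each row of $L$ so that it is a primitive vector of $\mathcal{O}_{\mathcal{E}}^n$, we have $L \in M_{n\times n}(\mathcal{O}_{\mathcal{E}})$ with $v(\det L)\ge 0$; if $v(\det L) = 0$ then $L \in GL_n(\mathcal{O}_{\mathcal{E}})$ and we stop. Otherwise $\bar L \in M_{n\times n}(k((T)))$ is singular, so its rows satisfy a relation $\sum_i \bar\mu_i\bar L_i = 0$ which, after clearing $T$-denominators, may be taken with $\bar\mu_i \in k[[T]]$ and some $\bar\mu_{i_0} \in k[[T]]^\times$. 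Lifting to $\mu_i \in \mathcal{O}_K[[T]]$ with $\mu_{i_0}$ a unit, the matrix $U$ obtained from the identity by replacing its $i_0$-th row with $(\mu_1/p,\dots,\mu_n/p)$ has $\det U = \mu_{i_0}/p \in R^\times$, hence $U \in GL_n(R)$, and $UL$ again lies in $M_{n\times n}(\mathcal{O}_{\mathcal{E}})$ with $v(\det(UL)) = v(\det L) - 1$. Repeating (renormalizing rows after each step) strictly decreases the non-negative integer $v(\det L)$, so the process terminates with $L \in GL_n(\mathcal{O}_{\mathcal{E}})$.

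The heart of the proof is (II), which I would establish by $p$-adic successive approximation, ``peeling off'' negative powers of $T$. For $X \in M_{n\times n}(\mathcal{O}_{\mathcal{E}})$ write $X = X^+ + X^-$, where $X^+ \in M_{n\times n}(\mathcal{O}_K[[T]])$ collects the terms of non-negative $T$-degree and $X^- \in M_{n\times n}(\mathcal{O}_K\ang{T^{-1}}) \subset M_{n\times n}(A^\circ)$ the terms of negative degree; crucially $\mathcal{O}_K[[T]]$ and $\mathcal{O}_K\ang{T^{-1}}$ are $p$-adically complete rings. Given $M \in GL_n(\mathcal{O}_{\mathcal{E}})$, first pick $B^{(1)} \in GL_n(R^\circ)$ reducing to $\bar M^{-1} \in GL_n(k((T)))$ mod $p$: this is possible because $GL_n(k((T)))$ is generated by elementary matrices — which lift to $SL_n(R^\circ)$ — together with the matrices $\mathrm{diag}(u,1,\dots,1)$, $u = T^m\bar v \in k((T))^\times$ — which lift to $\mathrm{diag}(T^m v,1,\dots,1)$ for $v \in \mathcal{O}_K[[T]]^\times$ a lift of $\bar v$. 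Put $C^{(1)} = I$, so $B^{(1)}MC^{(1)} \equiv I \pmod p$. Inductively, if $B^{(N-1)}MC^{(N-1)} = I + p^{N-1}E$, set $B_N' = I - p^{N-1}E^+ \in GL_n(\mathcal{O}_K[[T]])$ and $C_N' = I - p^{N-1}E^- \in GL_n(\mathcal{O}_K\ang{T^{-1}})$ (both invertible, their determinants being $\equiv 1 \pmod p$), and check $B_N'(I+p^{N-1}E)C_N' \equiv I \pmod{p^{2N-2}}$; then $B^{(N)} = B_N'B^{(N-1)}$, $C^{(N)} = C^{(N-1)}C_N'$ satisfy $B^{(N)}MC^{(N)} \equiv I \pmod{p^N}$. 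Since each $B_N' \equiv I \pmod{p^{N-1}}$ lies in $M_{n\times n}(\mathcal{O}_K[[T]])$, the products $B_N'\cdots B_2'$ converge in $GL_n(\mathcal{O}_K[[T]])$, so $B^{(\infty)} := \lim_N B^{(N)} \in GL_n(R^\circ)$; likewise $C^{(\infty)} := \lim_N C^{(N)} \in GL_n(A^\circ)$; and passing to the limit gives $B^{(\infty)}MC^{(\infty)} = I$, i.e. $M = (B^{(\infty)})^{-1}(C^{(\infty)})^{-1}$, which is (II).

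The subtle point is keeping the left factor inside $R^\circ$: this is exactly why one peels so that the correction to the left factor at each stage involves only non-negative powers of $T$ and hence lies in the $p$-adically complete ring $\mathcal{O}_K[[T]] \subset R^\circ$ — a naive successive approximation would let the pole orders of the partial products grow without bound, and the limit would lie only in $\mathcal{O}_{\mathcal{E}}$. The other point requiring care is the base case of (II): a coefficientwise lift of $\bar M^{-1}$ need not have unit determinant over $R^\circ$, which is why one lifts instead through the generation of $GL_n(k((T)))$ by elementary and diagonal matrices. Everything else is a routine check with the $p$-adic valuation $v$.
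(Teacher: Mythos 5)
Your proof is correct, and its engine is the same as the paper's: a $p$-adic successive approximation in which the correction to the left factor at each stage is supported in non-negative $T$-degrees, hence lies in the $p$-adically complete ring $\mathcal{O}_K[[T]]$, so the infinite product converges inside $GL_n(\mathcal{O}_K[[T]][T^{-1}])$ rather than merely in $GL_n(\mathcal{O}_{\mathcal{E}})$. The packaging differs in two genuine ways. First, the paper approximates one-sidedly, choosing $k_i = I + s_i p^i$ so that $k_i\cdots k_0 L$ is a Laurent \emph{polynomial} modulo each $p^{i+1}$, whereas you approximate two-sidedly ($B_N'$ on the left, $C_N'$ on the right) and obtain the clean factorization $GL_n(\mathcal{O}_{\mathcal{E}}) = GL_n(\mathcal{O}_K[[T]][T^{-1}])\cdot GL_n(\mathcal{O}_K\langle T,T^{-1}\rangle)$; this is slightly more structure than the lemma needs, but it makes the convergence bookkeeping transparent. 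Second, and more substantively, your step (I) and your base case for (II) repair points that the paper glosses over: the paper reduces to $L \in GL_n(\mathcal{O}_{\mathcal{E}})$ ``after multiplying by powers of $p$ and $T$,'' but scalar multiplication cannot in general make $\det L$ a unit of $\mathcal{O}_{\mathcal{E}}$ (e.g.\ $L = \mathrm{diag}(1,p)$), and even then a lift $k_0 \in GL_n(\mathcal{O}_K[[T]])$ of $\bar L^{-1}$ need not exist since $\bar L$ need not lie in $GL_n(k[[T]])$. Your row-reduction argument for $GL_n(\mathcal{E}) = GL_n(R)\cdot GL_n(\mathcal{O}_{\mathcal{E}})$ and your lift of $\bar M^{-1}$ through the elementary-plus-diagonal generators of $GL_n(k((T)))$ handle exactly these issues, so your write-up is, if anything, more complete than the one in the paper.
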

\begin{proof}
After multiplying $L$ by powers of $p$ and $T$ we may assume that 
$L \in GL_n(\mathcal{O}_{\mathcal{E}})$ and that the reduction of $L$ modulo $p$
lies in $k[[T]]$.  In particular, we may write
\[L(T) = \sum_{n=-\infty}^\infty l_nT^n,\] where $l_n \in M_{d\times d}(\mathcal{O}_K)$.
We let $v_p(l_n)$ denote the infimum of the $p$-adic valuation of the entries
in $l_n$.  Then $L$ having entries in $\mathcal{O}_{\mathcal{E}}$ is equivalent
to $v_p(l_n)\geq 0$ for all $n$ and $v_p(l_n)\to \infty$ as $n\to -\infty$. 
Since \[\mathcal{O}_K\ang{T,T^{-1}} \cong \varprojlim_{n\to\infty} \mathcal{O}_K/p^n\mathcal{O}_K [T,T^{-1}],\]
it will suffice to find $B \in GL_n(\mathcal{O}_K[[T]][T^{-1},p^{-1}])$ such that $BL$ reduces
to a polynomial in $T$ and $T^{-1}$ modulo every power of $p$.  We do this by successive
$p$-adic approximation.

Let $k_0(T) \in GL_n(\mathcal{O}_K[[T]])$ be a matrix of power series that reduces
to $L(T)^{-1}$ modulo $p$.  This gives
\[ k_0(T)L(T) \equiv 1 + T^{c_1}r_1(T)p \mod p^2,\]
where we may take $r_1(T)$ to be a matrix with entries in $\mathcal{O}_K[[T]]$ 
that is invertible in $\mathcal{O}_K[[T]]$ (i.e. $T$ does not divide $r_1$ and the 
constant term is a $p$-adic unit).  We may find a matrix
$s_1(T)$ with entries in  $\mathcal{O}_K[[T]]$ so that 
\[a_1(T)=s_1(T) + T^{c_1}r_1(T)\]
has no terms of positive degree.  Then $k_1(T) = 1 + s_1(T)p$
satisfies the congruence
\[k_1(T)k_0(T)L(T) \equiv 1 + a_1(T)p \mod p^2, \]
which lies in $\mathcal{O}_K/p^2 \mathcal{O}_K[T,T^{-1}]$.  Following this pattern, we let 
$r_2(T)$ be a matrix with entries in $\mathcal{O}_K[[T]]$ such that
\[k_1(T)k_0(T)L(T) \equiv 1 + a_1(T)p + T^{c_2}r_2(T)p^2 \mod p^3. \]
Then we find $s_2(T)$ with entries in $\mathcal{O}_K[[T]]$ so that
\[a_2(T)=s_2(T) + T^{c_2}r_2(T)\]
has no terms of positive degree.  Setting $k_2(T)=1 + s_2(T)p^2$
we see that $k_2(T)k_1(T)k_0(T)L(T)$ reduces modulo $p^3$ to an element of 
$\mathcal{O}_K/p^3 \mathcal{O}_K[T,T^{-1}]$.  Continuing this process inductively 
we see that \[B(T)=\prod_{i=0}^\infty k_i(T)\] satisfies the desired properties.

\end{proof}

\bibliographystyle{plain}
\bibliography{log_growth_rings.bbl}

\Addresses

\end{document}